\documentclass[11pt]{article}

\usepackage[a4paper,margin=1in]{geometry}
\usepackage{amsmath,amssymb,amsthm,mathtools,bm}
\usepackage{microtype}
\usepackage{xcolor}
\usepackage{hyperref}
\usepackage{algorithm}
\usepackage{algpseudocode}
\usepackage{cite}

\hypersetup{
  colorlinks=true,
  linkcolor=blue!55!black,
  citecolor=blue!55!black,
  urlcolor=blue!55!black
}

\newtheorem{theorem}{Theorem}[section]
\newtheorem{lemma}[theorem]{Lemma}

\newtheorem{remark}[theorem]{Remark}

\newtheorem{proposition}{Proposition}[section]

\newcommand{\R}{\mathbb{R}}

\newcommand{\ip}[2]{\left\langle #1,#2\right\rangle}
\newcommand{\norm}[1]{\left\|#1\right\|}
\newcommand{\pos}[1]{\left[#1\right]_{+}}

\newcommand{\bigO}{\mathcal{O}}
\newcommand{\proj}{\operatorname{Proj}}

\newcommand{\dom}{\operatorname{dom}}

\title{Accelerated primal-dual dynamics and algorithms for convex optimization with nonlinear inequality constraints}

\author{Xin He\thanks{School of Science, Xihua University, Chengdu, Sichuan, China. E-mail: hexinuser@163.com.\\This work was supported by the National Natural Science Foundation of China (Grant No. 12601606), Sichuan Science and Technology Program (Grant No. 2025ZNSFSC0813), and the Talent Introduction Project of Xihua University (Grant No. Z241102).}}
\date{\today}

\begin{document}
\maketitle
\begin{abstract}
We consider convex optimization with nonlinear inequality constraints and
develop a primal-dual multiplier framework that is consistent in continuous
and discrete time. We first propose continuous-time dynamics with
Nesterov-type vanishing damping $\alpha/t$, together with compatible
extrapolations of the dual variable and the nonlinear constraint mapping.
Under convexity assumptions and $\alpha\geq3$, we establish
$\mathcal O(t^{-2})$ convergence rates for both nonlinear feasibility and
the objective residual. In the noncritical regime $\alpha>3$, with an
admissible choice of the extrapolation parameter, we further prove that the
entire primal-dual trajectory converges to a KKT pair and sharpen both
continuous-time estimates to $o(t^{-2})$. We then derive an inexact
accelerated primal-dual algorithm through a compatible discretization of a
perturbed version of the dynamic. For composite convex objectives, a
weighted summability condition on the primal inexactness yields
$\mathcal O(k^{-2})$ rates for feasibility and the objective residual. In
the corresponding noncritical regime, the  discrete primal-dual
sequence converges to a KKT pair and both residual estimates improve to
$o(k^{-2})$. Thus the continuous and discrete results exhibit matching
accelerated rates and matching asymptotic improvements. To the best of our
knowledge, this is the first Nesterov-type primal-dual multiplier framework
for convex optimization with nonlinear inequality constraints.
\end{abstract}

{\bf Keywords:}
Convex optimization, Nonlinear inequality constraints, Nesterov acceleration, Primal-dual dynamical systems,  Inexact primal-dual algorithms, Convergence analysis.

\section{Introduction}

\subsection{Problem formulation}
In this paper, we consider the following convex optimization problem with
nonlinear inequality constraints:
\begin{equation}\label{ques_main}
  \min_{x\in\R^n} f(x)
  \quad \text{s.t.}\quad
  g(x)\leq0,
\end{equation}
where $f$ is a convex function and
$g=(g_1,\ldots,g_m)^\top$, with each
$g_i:\R^n\to\R$, $i=1,\ldots,m$, being convex and continuously
differentiable. Problem \eqref{ques_main} is a fundamental model in
nonlinear programming and arises in a broad range of applications,
including machine learning, statistical estimation,
and signal processing
\cite{LanSiopt,XuSioptc,MuehlebachMathProg,XuMathProg,LuSiopt,LinBook}.

Let $\mathcal{X}:=\{x\in\R^n:g(x)\leq0\}$ denote the feasible set of
\eqref{ques_main}. We denote the optimal solution set and the optimal
value by
$
  \mathcal{X}^\star
  :=
  \arg\min_{x\in\mathcal{X}}f(x)
$
and
$
  f^\star
  :=
  \min_{x\in\mathcal{X}}f(x),
$
respectively. Throughout the paper, we assume that
$\mathcal{X}^\star\neq\varnothing$ and that Slater's constraint
qualification holds; namely, there exists $x^{\rm s}\in\operatorname{ri}(\dom f)$
such that $g_i(x^{\rm s})<0$ for every $i$. Consequently, strong
duality holds and every $x^\star\in\mathcal{X}^\star$ admits at least
one associated Lagrange multiplier. The Lagrangian function associated
with problem \eqref{ques_main} is
\begin{equation}\label{eq:Lag}
  \mathcal{L}(x,\lambda)
  :=
  f(x)+\langle\lambda,g(x)\rangle,
  \qquad
  (x,\lambda)\in\R^n\times\R_+^m,
\end{equation}
where
$
  \R_+^m
  :=
  \{\lambda\in\R^m:\lambda_i\geq0,\ i=1,\ldots,m\}.
$
For the constraint mapping $g$, we denote its Jacobian matrix by
\[
  J_g(x)
  :=
  \bigl[\nabla g_1(x),\ldots,\nabla g_m(x)\bigr]^\top
  \in\R^{m\times n},
\]
so that
$
  J_g(x)^\top\lambda
  =
  \sum_{i=1}^m\lambda_i\nabla g_i(x).
$
For a nonempty closed convex set $C$, let $\proj_C$ denote the Euclidean
projection onto $C$, namely,
$
  \proj_C(z)
  :=
  \arg\min_{y\in C}\|y-z\|.
$
In particular, for $z\in\R^m$, denote
$
  \pos{z}
  :=
  \proj_{\R_+^m}(z)
  =
  \bigl(\max\{z_1,0\},\ldots,\max\{z_m,0\}\bigr)^\top.
$
Thus, $\|\pos{g(x)}\|$ provides a natural measure of the nonlinear
inequality violation. Throughout the paper, inequalities between vectors in
$\R^m$ are understood componentwise; that is, for $u,v\in\R^m$,
$u\leq v$ means $u_i\leq v_i$ for all $i=1,\ldots,m$.
A vector-valued function $M:I\to\R^m$, where $I\subset\R$ is an interval,
is said to be componentwise nondecreasing if $M(s)\leq M(t)$ whenever
$s\leq t$. Similarly, a sequence $\{M_k\}\subset\R^m$ is said to be
componentwise nondecreasing if $M_k\leq M_{k+1}$ for all $k$. 

 We next recall the primal-dual characterization of
\eqref{ques_main}. A pair
$(x^\star,\lambda^\star)\in\R^n\times\R_+^m$ is called a
Karush-Kuhn-Tucker (KKT) pair of \eqref{ques_main} if
\begin{equation}\label{eq:KKT}
\left\{
\begin{aligned}
  &0\in \partial f(x^\star)+J_g(x^\star)^\top\lambda^\star,\\
  &g(x^\star)\leq0,\quad\lambda^\star\geq0,
  \quad
  \langle\lambda^\star,g(x^\star)\rangle=0.
\end{aligned}
\right.
\end{equation}
When $f$ is continuously differentiable, the stationarity condition in
\eqref{eq:KKT} reduces to
$
  \nabla f(x^\star)
  +J_g(x^\star)^\top\lambda^\star
  =0.
$
Under the standing constraint qualification, every primal optimal solution
admits a Lagrange multiplier, and a pair $(x^\star,\lambda^\star)$ satisfies
\eqref{eq:KKT} if and only if it is a saddle point of the Lagrangian
\cite{Rockafellar}, namely,
\begin{equation}\label{eq:saddle}
  \mathcal{L}(x^\star,\lambda)
  \leq
  \mathcal{L}(x^\star,\lambda^\star)
  \leq
  \mathcal{L}(x,\lambda^\star),
  \qquad
  \forall x\in\R^n,\ \lambda\in\R_+^m.
\end{equation}
Let $\Omega$ denote the nonempty set of KKT pairs of \eqref{ques_main}.

Nesterov acceleration \cite{NesterovSovMath,NesterovBook} has been
extensively studied for unconstrained convex optimization, in both discrete
algorithms and continuous-time dynamics
\cite{SuJmlr,WibisonoPnas,AttouchMathProg,BeckSiis,AttouchSicon}. Its
extension to linearly constrained convex optimization has also produced
accelerated primal-dual dynamics and time discretizations
\cite{HeSioc,ZengTac,BotJde,BotMathProg,LuoMC,HeAuto}. More recently,
accelerated first-order methods have been developed for problems with
nonlinear functional constraints
\cite{BoobMathProgb,LinNeurips,ZhuSiopt,DengInformsJoc,
MuehlebachMathProg,XuMathProg}. In a multiplier-based primal-dual
formulation, however, nonlinear constraints make the problem more difficult. Unlike the linear case, inertial extrapolation cannot be applied directly to the
constraint mapping, since the coupling involves both $g$ and its Jacobian
$J_g$, as well as the nonnegativity of the multiplier. This leads to three questions that motivate
our construction:
\begin{center}
\setlength{\fboxsep}{8pt}
\fbox{
\begin{minipage}{0.92\textwidth}
\raggedright
\textbf{(Q1)} Can one construct a Nesterov-type primal-dual multiplier
framework for problem \eqref{ques_main} that admits compatible continuous-time
and discrete-time formulations?

\medskip
\textbf{(Q2)} Without strong convexity, can one obtain the
$\mathcal O(t^{-2})$ rates for both constraint violation and objective
residual in continuous time, together with the corresponding
$\mathcal O(k^{-2})$ rates in discrete time?

\medskip
\textbf{(Q3)} In the noncritical damping regime, do the complete continuous
and discrete primal-dual trajectories converge to KKT pairs, and can their
accelerated estimates be sharpened to little-$o$ rates?
\end{minipage}
}
\end{center}

 This paper provides affirmative answers to these questions by developing
a continuous-time and discrete-time accelerated primal-dual framework for
\eqref{ques_main}.

\subsection{Nesterov acceleration}

Nesterov's accelerated gradient method
\cite{NesterovSovMath,NesterovBook} is one of the fundamental developments
in first-order convex optimization. For smooth convex minimization, it
achieves the $\mathcal O(1/k^2)$ convergence rate for the objective
residual. A continuous-time interpretation of Nesterov acceleration was
initiated by Su, Boyd, and Cand\`es \cite{SuJmlr}, who showed that the
accelerated gradient method admits the second-order continuous-time model
\begin{equation}\label{eq:avd}
  \ddot x(t)
  +\frac{\alpha}{t}\dot x(t)
  +\nabla f(x(t))
  =0.
\end{equation}
For the critical value $\alpha=3$, the dynamic \eqref{eq:avd} can be viewed as
the continuous-time counterpart of Nesterov's accelerated gradient method,
while for $\alpha\geq3$ its trajectories satisfy
$
  f(x(t))-\min f=\mathcal O(t^{-2}).
$
For the noncritical regime $\alpha>3$, trajectory convergence can be
combined with strict Lyapunov dissipation to obtain the sharper estimate
$f(x(t))-\min f=o(t^{-2})$; see
\cite{AttouchMathProg,MayTjm}.
This highlights the central role of the vanishing damping $\alpha/t$ in
Nesterov acceleration. Since then, inertial dynamics with vanishing
damping have been extensively studied from different perspectives,
including time scaling \cite{AttouchJEMS,WilsonJmlr}, Hessian-driven damping
\cite{BotMp,ShiMp,LiSYSiopt}, and the construction of accelerated algorithms through
numerical discretization \cite{LuoMathProg,AttouchC18,BotArxiv,HeCOAP}.

A particularly important aspect of the dynamical system viewpoint is
that a suitable discretization may preserve the accelerated behavior of the
continuous-time model. For the composite convex problem
\[
  \min_{x\in\R^n} f(x):=\phi(x)+h(x),
\]
where $\phi$ is $L_\phi$-smooth and convex and $h$ is proper, closed, and
convex, an explicit-implicit discretization of the corresponding inertial
differential inclusion of \eqref{eq:avd} leads naturally to accelerated forward-backward
schemes of Fast Iterative Shrinkage-Thresholding Algorithm (FISTA) type
\cite{BeckSiis,Chambolle,ChambolleJota}. A representative iteration takes
the form \cite{SuJmlr,AttouchMathProg}:
\begin{equation}\label{eq:fista}
\left\{
\begin{aligned}
  y_k
  &=
  x_k+\frac{k-1}{k+\alpha-1}(x_k-x_{k-1}),\\
  x_{k+1}
  &=
  \operatorname{prox}_{\tau h}
  \bigl(y_k-\tau\nabla\phi(y_k)\bigr),
\end{aligned}
\right.
\end{equation}
where $\alpha\geq3$ and $0<\tau\leq1/L_\phi$. Such schemes achieve the
$\mathcal O(1/k^2)$ rate for the objective residual and provide discrete
counterparts of the inertial dynamic \eqref{eq:avd}.

The continuous-discrete acceleration viewpoint has also been successfully
extended to primal-dual methods for linearly constrained convex
optimization:
\begin{equation}\label{eq:linearP}
  \min_{x} f(x)
  \quad\text{s.t.}\quad
  Ax=b.
\end{equation}
By incorporating Nesterov-type vanishing damping and inertial extrapolation
into the primal-dual framework, accelerated dynamics and their
rate-preserving discretizations have been developed for
\eqref{eq:linearP}; see, e.g.,
\cite{ZengTac,BotJde,HeSioc,HeAuto,BotMathProg,ZhaoJmlr,LuoMC,HeArxiv,AttouchJota}. In particular, several of these constructions recover the
Nesterov accelerated convergence rates, yielding $\mathcal O(t^{-2})$ rates for
objective residual and feasibility in continuous time and corresponding
$\mathcal O(k^{-2})$ rates in discrete time. A crucial feature behind this extension is the compatibility between
inertial extrapolation and the affine constraint mapping:
\[
  A\bigl(x(t)+\theta t\dot x(t)\bigr)-b
  =
  Ax(t)-b+\theta tA\dot x(t).
\]
Recently, trajectory convergence and $o(t^{-2})$ objective and feasibility
rates have also been established for a Nesterov accelerated primal-dual
dynamics with linear equality constraints in the strict extrapolation regime
 \cite{HeLittleO}. This condition corresponds to
$\alpha>3$.
For a nonlinear mapping $g$, however, extrapolating the primal variable
and then evaluating the constraint mapping is generally not equivalent to
extrapolating the constraint value through its first-order linearization.
Hence, the mechanism that transfers Nesterov acceleration to the
primal-dual framework under affine constraints does not extend directly
to nonlinear inequality constraints.

 \subsection{Nonlinear inequality constraints and main contributions}
Primal-dual dynamical systems for problem \eqref{ques_main} have been studied extensively. These dynamics provide asymptotic convergence and
stability results and, under additional regularity assumptions, exponential
convergence; see 
\cite{FeijerAutomatica,CherukuriScl,CherukuriTac}.
Most of these approaches are based on first-order primal-dual flows and
focus primarily on stability or asymptotic convergence. By contrast, the issue
addressed here is how to incorporate Nesterov-type inertia into nonlinear
multiplier dynamics in a form that both yields accelerated rates
and remains compatible with discretization.

 At the discrete level, first-order approaches to problem
\eqref{ques_main} include conditional gradient, level constrained,
augmented Lagrangian, and primal-dual schemes; see
\cite{LanSiopt,XuSioptc,XuMathProg,LuSiopt,ZhuSiopt,BoobMathProgb}.
Their guarantees depend on the oracle model and the problem structure. For
example, the nonlinear compositional primal-dual method of
\cite{ZhuSiopt} has general convex $\mathcal O(k^{-1})$ ergodic and
last-iterate guarantees, which improve to $\mathcal O(k^{-2})$ when an
appropriate objective component is strongly convex. For strongly convex
functional constraints, \cite{LinNeurips} obtains the order-optimal
$\mathcal O(\varepsilon^{-1/2})$ iteration complexity, corresponding to an
$\mathcal O(k^{-2})$ accuracy scale. Uniformly optimal methods for H\"older
smooth function-constrained problems likewise specialize, in the smooth
case, to the $\mathcal O(\varepsilon^{-1/2})$ oracle-complexity scale
\cite{DengInformsJoc}. Thus, in accelerated smooth convex regimes, the
strongest polynomial guarantees in this literature are of $\mathcal O(k^{-2})$  type; such bounds do not by themselves imply that the scaled
residuals $k^2\|[g(x_k)]_+\|$ and
$k^2|f(x_k)-f^\star|$ vanish. A complementary line of work constructs accelerated algorithms by
discretizing constrained dynamics. In particular, Muehlebach and Jordan
\cite{MuehlebachMathProg} developed accelerated continuous-time and
discrete-time methods under nonlinear constraints by imposing constraints
on the velocity and using local convex approximations of the feasible set.
Their acceleration mechanism is therefore different from the multiplier
construction considered here: in our framework, a second-order dual
evolution is coupled directly with the nonlinear constraint mapping through
a tangent extrapolation.

These developments demonstrate that acceleration is possible for
nonlinear functional constraints. However, the specific question of
constructing a Nesterov-type primal-dual multiplier framework that
consistently connects continuous-time dynamics with discrete-time algorithms
remains less developed. In particular, the existing big-$\mathcal O$ accelerated
rates leave open whether a single continuous-discrete multiplier mechanism
can also yield full primal-dual convergence and little-$o$ improvements for
both the objective residual and nonlinear feasibility. The main
contributions are as follows.

 \textbf{(a) Accelerated continuous-time dynamics.}
We propose a Nesterov-type inertial primal-dual dynamical system for
problem \eqref{ques_main}. Its defining mechanism couples the extrapolated
dual variable with the tangent constraint extrapolation
$
  g(x(t))
  +\frac{t}{\gamma}J_g(x(t))\dot x(t)
$
and a projected multiplier. This construction is chosen so that the
nonlinear constraint coupling is compatible with the inertial terms in the
Lyapunov analysis. Under the convexity and smoothness assumptions, we
prove
\[
  \|\pos{g(x(t))}\|
  =
  \mathcal O(t^{-2}),
  \qquad
  |f(x(t))-f^\star|
  =
  \mathcal O(t^{-2}).
\]
Hence the Nesterov-type rate is obtained simultaneously for nonlinear
feasibility and the objective residual, without strong convexity. This extends the accelerated convergence results for unconstrained convex optimization \cite{SuJmlr,AttouchMathProg} and the accelerated multiplier results for affine constraints \cite{ZengTac,BotJde,HeSioc} to nonlinear inequalities through a projected  coupling and tangent extrapolation. In addition, when $\alpha>3$, we prove that the entire primal-dual trajectory
converges to a KKT pair and establish
\[
  \|\pos{g(x(t))}\|+|f(x(t))-f^\star|=o(t^{-2}).
\]
The velocities also decay faster than their natural inertial scale.
These results extend the trajectory convergence and little-$o$ rate
results for unconstrained inertial dynamics
\cite{AttouchMathProg,MayTjm} and accelerated primal-dual dynamics
with linear equality constraints \cite{HeLittleO} to nonlinear
convex inequality constraints.

\textbf{(b) Inexact accelerated primal-dual algorithm.}
Starting from a perturbed version of the continuous dynamics, we derive an
inexact primal-dual algorithm through a compatible time discretization. At the
discrete level, the objective may have the composite form
$
  f=\phi+h,
$
where $\phi$ is smooth and convex and $h$ is proper, closed, convex, and
possibly nonsmooth. The discretization retains the primal and dual inertial
extrapolations, the nonlinear constraint extrapolation, and the projected
multiplier, while allowing the primal subproblem to be solved
inexactly. Under a weighted summability condition
on the inexactness sequence, we establish the pointwise estimates
\[
  \|\pos{g(x_k)}\|
  =
  \mathcal O(k^{-2}),
  \qquad
  |f(x_k)-f^\star|
  =
  \mathcal O(k^{-2}).
\]
These results extend the accelerated convergence guarantees for
unconstrained composite optimization \cite{BeckSiis,ChambolleJota,SuJmlr,AttouchMathProg}
and primal-dual algorithms for linear equality constraints
\cite{HeAuto,HeArxiv} to nonlinear convex inequalities, while allowing
inexact primal updates. The rates also match the continuous-time
$\mathcal O(t^{-2})$ estimates, linking the continuous dynamics and the
discrete algorithm through the same multiplier-based acceleration mechanism.
Moreover, when $\alpha>3$, the entire primal-dual
sequence converges to a KKT pair and
\[
  \|\pos{g(x_k)}\|+|f(x_k)-f^\star|=o(k^{-2}).
\]
 These conclusions extend the iterate convergence and little-$o$
rate results for unconstrained accelerated forward-backward algorithms
\cite{AttouchMathProg,AttouchPtSiopt} and accelerated augmented Lagrangian methods
for linear equality constraints \cite{HeArxiv} to the nonlinear
inequality-constrained composite setting. Compared with the accelerated
big-$O$ guarantees for nonlinear functional constraints discussed in
\cite{ZhuSiopt,LinNeurips,DengInformsJoc}, the present analysis additionally
establishes little-$o$ rates and convergence of the 
primal-dual sequence, without strong convexity and while permitting
summably inexact primal updates.

\subsection{Organization}
The remainder of the paper is organized as follows.
Section~\ref{sec:dynamics} introduces the inertial primal-dual dynamical
system and establishes its well-posedness, trajectory convergence, and
convergence rates.
Section~\ref{sec:algorithm} derives the corresponding inexact discretization
and analyzes the resulting algorithm. Section~\ref{sec:conclusion} concludes
the paper. Auxiliary results used in the analysis are collected in the
appendix.

\section{Accelerated primal-dual dynamics}\label{sec:dynamics}

Throughout this section, we impose the additional assumptions required for
the continuous-time analysis. More precisely, we assume that
$f:\R^n\to\R$ and $g_i:\R^n\to\R$, $i=1,\ldots,m$, are convex and
continuously differentiable, and that their gradients are locally Lipschitz
continuous. We first construct a Nesterov-type inertial primal-dual dynamical system for
\eqref{ques_main} and establish its global well-posedness. We then derive
$\bigO(1/t^2)$ convergence rates for the nonlinear inequality violation and
the objective residual. In the strict parameter regime
$\alpha>3$, we also prove convergence of the full
primal-dual trajectory and improve these estimates to little-$o$ rates.

\subsection{Construction of the dynamics}

Our construction combines Nesterov-type inertial dynamics with the
primal-dual structure induced by the classical
Powell-Hestenes-Rockafellar (PHR) augmented Lagrangian
\cite{Rockafellar}. For $\beta>0$,
define the augmented Lagrangian associated with problem \eqref{ques_main}:
\begin{equation}\label{eq:PHR}
  \mathcal L_\beta(x,\lambda)
  :=
  f(x)
  +
  \frac{1}{2\beta}
  \left(
    \norm{\pos{\lambda+\beta g(x)}}^2
    -
    \norm{\lambda}^2
  \right),
  \qquad
  (x,\lambda)\in\R^n\times\R^m.
\end{equation}
Its primal and dual gradients are
\begin{equation}\label{eq:PHRG}
\begin{aligned}
  \nabla_x\mathcal L_\beta(x,\lambda)
  &=
  \nabla f(x)
  +
  J_g(x)^\top\pos{\lambda+\beta g(x)},\\
  \nabla_\lambda\mathcal L_\beta(x,\lambda)
  &=
  \frac{1}{\beta}
  \left(
    \pos{\lambda+\beta g(x)}
    -
    \lambda
  \right).
\end{aligned}
\end{equation}
The positive-part mapping in \eqref{eq:PHR} incorporates the inequality
constraint structure. Indeed, for every $\beta>0$,
$
  \lambda=\pos{\lambda+\beta g(x)}
$
is equivalent to
$
  g(x)\leq0$,
  $\lambda\geq0$, and
  $\ip{\lambda}{g(x)}=0.
$
Consequently,
$
  \nabla_x\mathcal L_\beta(x^{\star},\lambda^{\star})=0$ and $
  \nabla_\lambda\mathcal L_\beta(x^{\star},\lambda^{\star})=0
$
if and only if $(x^{\star},\lambda^{\star})\in\Omega$ satisfies the KKT conditions of
\eqref{ques_main}.

For nonlinear constraints, inertial extrapolation is not directly
compatible with the constraint mapping $g$. Along the primal trajectory,
the constraint variation is governed by
$
  \frac{d}{dt}g(x(t))
  =
  J_g(x(t))\dot x(t).
$
This motivates introducing compatible extrapolations of the dual variable
and the nonlinear constraint mapping in the primal-dual coupling. Let $t_0>0$. We propose the following inertial primal-dual dynamical system:
\begin{equation}\label{eq:dy}
\boxed{
\left\{
\begin{aligned}
  \ddot x(t)
  +\frac{\alpha}{t}\dot x(t)
  +\nabla f(x(t))
  +J_g(x(t))^\top p(t)
  &=0,\\
  \ddot\lambda(t)
  +\frac{\alpha}{t}\dot\lambda(t)
  -\frac{\sigma}{\beta}
  \bigl(
    p(t)-\widehat\lambda(t)
  \bigr)
  &=0,
\end{aligned}
\right.
\qquad t\geq t_0
}
\end{equation} with $(x(t_0),\lambda(t_0), \dot x(t_0), \dot\lambda(t_0))=(x_0,\lambda_0,v_0,w_0)\in \R^n\times\R^m\times\R^n\times\R^m$, where
\begin{align}
  \widehat\lambda(t)
  &:=
  \lambda(t)
  +
  \frac{t}{\gamma}\dot\lambda(t),
  \label{eq:hatL}\\
  \widehat g(t)
  &:=
  g(x(t))
  +
  \frac{t}{\gamma}
  J_g(x(t))\dot x(t),
  \label{eq:hatg}\\
  p(t)
  &:=
  \pos{
    \widehat\lambda(t)
    +
    \beta\widehat g(t)
  },
  \label{eq:p}
\end{align}
and
\begin{equation}\label{eq:para}
  \beta>0,
  \qquad
  \sigma>0,
  \qquad
  \alpha\geq3,
  \qquad
  2\leq\gamma\leq\alpha-1,
\end{equation}

The vanishing damping $\alpha/t$ follows the continuous-time interpretation
of Nesterov acceleration and has been extensively used in
inertial dynamics; see, e.g.,
\cite{SuJmlr,Bot26Mp,HeSioc,AttouchMathProg,WibisonoPnas}. The
parameter $\beta$ is inherited from the PHR augmented Lagrangian, while
$\sigma>0$ provides a constant scaling of the dual dynamics.
The velocity-extrapolation parameter $t/\gamma$ for the primal and dual variables is a standard
ingredient in accelerated primal-dual dynamics for linearly constrained
problems \cite{ZengTac,BotJde,HeSioc,AttouchJota,ZhaoJmlr,LuoJogo}. Motivated by this
construction, $\widehat\lambda(t)$ extrapolates the dual variable along its
velocity. For the nonlinear constraint mapping, we introduce the tangent
extrapolation $\widehat g(t)$ in \eqref{eq:hatg}, for which
$
  tJ_g(x(t))\dot x(t)
  =
  \gamma
  \bigl(
    \widehat g(t)-g(x(t))
  \bigr).
$
This identity provides the compatibility between the nonlinear constraint
coupling and the inertial terms that is needed in the Lyapunov analysis.
Finally, $p(t)\in\R_+^m$ is the projected multiplier associated with the
velocity-corrected PHR structure. In view of \eqref{eq:PHRG},
$
  \frac1\beta
  \bigl(
    p(t)-\widehat\lambda(t)
  \bigr)
$
is the corresponding dual residual, while
$
  \nabla f(x(t))
  +
  J_g(x(t))^\top p(t)
$
defines the primal coupling term. Hence, the dynamic \eqref{eq:dy} combines the
classical Nesterov-type inertial mechanism with the PHR multiplier
structure, together with the tangent extrapolation specifically introduced
to accommodate the nonlinear inequality constraints.

\subsection{Well-posedness and energy estimates}
We first establish local existence and uniqueness. Global existence will
then follow from the Lyapunov estimates.

\begin{proposition}\label{prop:local}
For an initial point $(x_0,\lambda_0,v_0,w_0)$, there exists a unique
solution
$
  (x,\lambda)\in
  C^2\bigl([t_0,T);\R^n\times\R^m\bigr)
$
of the dynamic \eqref{eq:dy} satisfying $(x(t_0),\lambda(t_0))=(x_0,\lambda_0)$ and $(\dot x(t_0), \dot\lambda(t_0))=(v_0,w_0)$ on a maximal
interval $[t_0,T)$, where $T\in(t_0,+\infty]$.
\end{proposition}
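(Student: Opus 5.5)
We rewrite \eqref{eq:dy} as a first-order system and apply the Cauchy--Lipschitz (Picard--Lindel\"of) theorem with a locally Lipschitz vector field. Set $Z=(x,\lambda,v,w)\in\R^n\times\R^m\times\R^n\times\R^m$ and define, for $t\geq t_0$,
\[
  F(t,Z):=\Bigl(v,\;w,\;-\tfrac{\alpha}{t}v-\nabla f(x)-J_g(x)^\top P(t,Z),\;-\tfrac{\alpha}{t}w+\tfrac{\sigma}{\beta}\bigl(P(t,Z)-\lambda-\tfrac{t}{\gamma}w\bigr)\Bigr),
\]
where
\[
  P(t,Z):=\pos{\lambda+\tfrac{t}{\gamma}w+\beta\bigl(g(x)+\tfrac{t}{\gamma}J_g(x)v\bigr)}.
\]
A $C^2$ solution $(x,\lambda)$ of \eqref{eq:dy} with the given initial data is then exactly the first two components of a $C^1$ solution of $\dot Z=F(t,Z)$ with $Z(t_0)=(x_0,\lambda_0,v_0,w_0)$. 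The identification uses $(v,w)=(\dot x,\dot\lambda)$, together with the observation that $P(t,Z(t))=p(t)$ by \eqref{eq:hatL}--\eqref{eq:p}.

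The key step is to show that $F$ is continuous on $[t_0,+\infty)\times\R^{2n+2m}$ and locally Lipschitz in $Z$, uniformly for $t$ in compact intervals. Fix a compact time interval $[t_0,T_1]$ and a bounded set $B$ of $Z$'s. The maps $x\mapsto g(x)$ and $x\mapsto\nabla f(x)$ are locally Lipschitz on $B$, since $f,g$ are $C^1$ with locally Lipschitz gradients. The same holds for $x\mapsto J_g(x)$, whose rows are the locally Lipschitz $\nabla g_i$. The bilinear term $(x,v)\mapsto \frac{t}{\gamma}J_g(x)v$ is then Lipschitz on $[t_0,T_1]\times B$, because products of bounded Lipschitz functions are Lipschitz on bounded sets. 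Since $\pos{\cdot}=\proj_{\R_+^m}$ is $1$-Lipschitz (nonexpansive projection onto a closed convex set), $P$ is Lipschitz on $[t_0,T_1]\times B$. Consequently $J_g(x)^\top P(t,Z)$ is also Lipschitz there, as a product of bounded Lipschitz maps, and the remaining terms are linear in $Z$ with coefficients $\alpha/t$, $t/\gamma$ that are continuous and bounded on $[t_0,T_1]$ since $t_0>0$. Continuity in $t$ is immediate from the same formulas.

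The Cauchy--Lipschitz theorem then gives a unique local $C^1$ solution $Z$. The standard continuation argument (Zorn or union of all solutions, with uniqueness ensuring compatibility) extends it to a unique maximal solution on $[t_0,T)$ with $T\in(t_0,+\infty]$. Since $\dot x=v$ and $\dot\lambda=w$ are $C^1$, we get $(x,\lambda)\in C^2$. Uniqueness for \eqref{eq:dy} follows because any $C^2$ solution yields a $C^1$ solution of the first-order system.

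The only point needing care is the nonsmooth projection composed with the velocity-dependent tangent extrapolation $\frac{t}{\gamma}J_g(x)\dot x$. This makes the field merely Lipschitz rather than $C^1$, which is why we use Lipschitz (not smooth) ODE theory. Local Lipschitzness of $J_g$, not just continuity, is essential here, and is supplied by the standing assumption.
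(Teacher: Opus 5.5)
Your proposal is correct and follows essentially the same route as the paper: rewrite \eqref{eq:dy} as a first-order system in $(x,\lambda,v,w)$, check that the right-hand side is continuous in $t$ and locally Lipschitz in the state, uniformly on compact time intervals (via local Lipschitzness of $\nabla f$, $J_g$, the bilinear term $J_g(x)v$, and nonexpansiveness of $\pos{\cdot}$), and then invoke Cauchy--Lipschitz to obtain a unique maximal solution. Your remarks on continuation to a maximal interval and on recovering $C^2$ regularity only spell out details that the paper leaves to the standard theorem.
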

\begin{proof}
Set $v=\dot x$ and $w=\dot\lambda$. Then \eqref{eq:dy} is equivalent to
\begin{equation}\label{eq:first}
\left\{
\begin{aligned}
  \dot x&=v,\\
  \dot\lambda&=w,\\
  \dot v&=-\frac{\alpha}{t}v-\nabla f(x)
  -J_g(x)^\top
  \pos{\lambda+\frac{t}{\gamma}w+\beta g(x)
  +\frac{\beta t}{\gamma}J_g(x)v},\\
  \dot w&=-\frac{\alpha}{t}w+\frac{\sigma}{\beta}
  \left[
  \pos{\lambda+\frac{t}{\gamma}w+\beta g(x)
  +\frac{\beta t}{\gamma}J_g(x)v}
  -\lambda-\frac{t}{\gamma}w
  \right].
\end{aligned}
\right.
\end{equation}
By assumption, $\nabla f$ and $\nabla g_i$, $i=1,\ldots,m$, are locally
Lipschitz continuous. Hence $J_g$ is locally Lipschitz continuous, $g$ is
locally Lipschitz continuous, and the mapping
$(x,v)\mapsto J_g(x)v$ is locally Lipschitz continuous on bounded sets.
Since the positive-part mapping is globally Lipschitz continuous, the
right-hand side of \eqref{eq:first} is continuous in $t$ and locally
Lipschitz continuous with respect to $(x,\lambda,v,w)$, uniformly on every
compact interval in $[t_0,+\infty)$. The standard local existence and
uniqueness theorem for ordinary differential equations therefore yields a
unique maximal solution on $[t_0,T)$.
\end{proof}

We next derive an energy estimate for the maximal solution. For an initial point $(x_0,\lambda_0,v_0,w_0)$, let
$(x(t),\lambda(t))$ be the maximal solution of the dynamic \eqref{eq:dy} given by
Proposition~\ref{prop:local}, defined on $[t_0,T)$, and let
$(x^\star,\lambda^\star)\in \Omega$. Write
$z^\star=(x^\star,\lambda^\star)$ to make the dependence of the
Lyapunov function on the reference KKT pair explicit, and define
\begin{equation}\label{eq:E}
\begin{aligned}
  {\mathcal E_{z^\star}(t)}:={}&
  t^2\bigl(
    \mathcal L(x(t),\lambda^\star)
    -\mathcal L(x^\star,\lambda^\star)
  \bigr)+\frac12
  \|\gamma(x(t)-x^\star)+t\dot x(t)\|^2
  \\
  &+\frac{\gamma\delta}{2}\|x(t)-x^\star\|^2+\frac1{2\sigma}
  \|\gamma(\lambda(t)-\lambda^\star)
  +t\dot\lambda(t)\|^2
  +\frac{\gamma\delta}{2\sigma}
  \|\lambda(t)-\lambda^\star\|^2.
\end{aligned}
\end{equation}
Here $\mathcal L$ denotes the Lagrangian associated with
\eqref{ques_main}, defined in \eqref{eq:Lag}, and
\begin{equation}\label{eq_delta}
	\delta=\alpha-\gamma-1\geq0.
\end{equation}
It follows from \eqref{eq:saddle} and \eqref{eq:para} that
${\mathcal E_{z^\star}(t)}\geq0$ for every $t\in[t_0,T)$. The following result establishes global well-posedness of
\eqref{eq:dy} and the basic estimates used in the subsequent
convergence analysis.
\begin{theorem}\label{thm:global}
Let $(x,\lambda)\in
C^2([t_0,T);\R^n\times\R^m)$ be the maximal solution of
\eqref{eq:dy} and $(x^\star,\lambda^\star)\in \Omega$. Then the following statements hold:
\begin{itemize}
  \item[(i)]
  The solution is global, namely, $T=+\infty$.
  \item[(ii)]
  The energy function ${\mathcal E_{z^\star}}$ is nonincreasing on $[t_0,+\infty)$.

  \item[(iii)] $(x(t),\lambda(t))$ is bounded on
  $[t_0,+\infty)$ and
$
    \|\dot x(t)\|+\|\dot\lambda(t)\|
    =
    \mathcal O\left(\frac1t\right).
$
\end{itemize}
\end{theorem}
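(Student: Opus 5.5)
The plan is a single Lyapunov computation: show that $\dot{\mathcal E}_{z^\star}\le0$ on $[t_0,T)$, which gives (ii) on the maximal interval. From the bounds this yields we then obtain (iii) and, by a standard blow-up argument, (i).

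\textbf{Step 1 (primal part of $\dot{\mathcal E}_{z^\star}$).} Differentiating the first term of \eqref{eq:E} gives
\[
2t\bigl(\mathcal L(x,\lambda^\star)-\mathcal L(x^\star,\lambda^\star)\bigr)+t^2\ip{\nabla f(x)+J_g(x)^\top\lambda^\star}{\dot x}.
\]
For the second term, substitute $t\ddot x=-\alpha\dot x-t(\nabla f(x)+J_g(x)^\top p)$. Then
\[
\tfrac{d}{dt}\tfrac12\|\gamma(x-x^\star)+t\dot x\|^2=\ip{\gamma(x-x^\star)+t\dot x}{-\delta\dot x-t(\nabla f(x)+J_g(x)^\top p)}.
\]
The cross term $-\gamma\delta\ip{x-x^\star}{\dot x}$ cancels against the derivative of $\frac{\gamma\delta}{2}\|x-x^\star\|^2$, which leaves $-\delta t\|\dot x\|^2$. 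The terms $t^2\ip{\nabla f}{\dot x}$ cancel. What remains is
\[
t^2\ip{J_g(x)\dot x}{\lambda^\star-p}=t\gamma\ip{\widehat g-g(x)}{\lambda^\star-p},
\]
by the tangent identity following \eqref{eq:hatg}, together with $-t\gamma\ip{x-x^\star}{\nabla f(x)+J_g(x)^\top p}$.

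For the last term we use convexity of $f$ and of each $g_i$, together with $p\ge0$:
\[
\ip{x-x^\star}{\nabla f(x)+J_g(x)^\top p}\ge f(x)-f^\star+\ip{p}{g(x)-g(x^\star)}.
\]
Since $\ip{\lambda^\star}{g(x^\star)}=0$, collecting terms gives
\[
(2-\gamma)t\bigl(\mathcal L(x,\lambda^\star)-\mathcal L(x^\star,\lambda^\star)\bigr)+\gamma t\ip{p}{g(x^\star)}+\gamma t\ip{\widehat g}{\lambda^\star-p}-\delta t\|\dot x\|^2.
\]
Here $\ip{p}{g(x^\star)}\le0$, and the first term is $\le0$ because $\gamma\ge2$ and by \eqref{eq:saddle}.

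\textbf{Step 2 (dual part and the key projection inequality).} The same computation for the dual terms, using the second equation of \eqref{eq:dy}, gives
\[
-\frac{\delta t}{\sigma}\|\dot\lambda\|^2+\frac{t\gamma}{\beta}\ip{\widehat\lambda-\lambda^\star}{p-\widehat\lambda}.
\]
The crucial point, and what I expect to be the main obstacle, is to control the coupling
\[
\ip{\widehat g}{\lambda^\star-p}+\tfrac1\beta\ip{\widehat\lambda-\lambda^\star}{p-\widehat\lambda}.
\]
Set $u=\widehat\lambda+\beta\widehat g$, so that $p=\pos{u}$, and split $\beta\widehat g=(u-p)+(p-\widehat\lambda)$. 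The projection characterization onto $\R^m_+$ with $\lambda^\star\ge0$ gives $\ip{u-p}{\lambda^\star-p}\le0$. The remaining terms combine exactly into $-\|p-\widehat\lambda\|^2$. Hence
\[
\dot{\mathcal E}_{z^\star}(t)\le-\delta t\Bigl(\|\dot x\|^2+\tfrac1\sigma\|\dot\lambda\|^2\Bigr)-\frac{\gamma t}{\beta}\|p-\widehat\lambda\|^2\le0,
\]
which proves (ii) on $[t_0,T)$.

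\textbf{Step 3 (boundedness, velocity decay, global existence).} Since $\mathcal E_{z^\star}(t)\le\mathcal E_{z^\star}(t_0)$, the quantities $\|\gamma(x-x^\star)+t\dot x\|$ and $\|\gamma(\lambda-\lambda^\star)+t\dot\lambda\|$ are bounded by some constant $C$. To cover the case $\delta=0$ as well, write
\[
\tfrac{d}{dt}\bigl(t^\gamma(x-x^\star)\bigr)=t^{\gamma-1}\bigl(\gamma(x-x^\star)+t\dot x\bigr).
\]
Integrating from $t_0$ gives $\|x(t)-x^\star\|\le t_0^\gamma\|x_0-x^\star\|/t^\gamma+C/\gamma$, so $x$ is bounded, and similarly $\lambda$ is bounded. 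Then $t\|\dot x\|$ and $t\|\dot\lambda\|$ are bounded, which is (iii) once $T=+\infty$ is known.

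To prove $T=+\infty$, suppose $T<\infty$. Then $(x,\lambda,\dot x,\dot\lambda)$ remains in a bounded set on $[t_0,T)$, since all bounds above are uniform in $t$. By continuity of the right-hand side of \eqref{eq:first}, $(\ddot x,\ddot\lambda)$ is bounded there. Hence $(x,\lambda,\dot x,\dot\lambda)$ is Lipschitz on $[t_0,T)$ and has a limit as $t\to T$. Proposition~\ref{prop:local} then extends the solution beyond $T$, contradicting maximality. This proves (i), and (ii)–(iii) hold on all of $[t_0,+\infty)$.
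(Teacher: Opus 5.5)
Your proposal is correct and follows essentially the same route as the paper: the same Lyapunov differentiation with the tangent identity, the convexity estimate with $p\ge0$, and the projection inequality yielding $\dot{\mathcal E}_{z^\star}\le0$, followed by the same boundedness and blow-up continuation argument. The only differences are cosmetic. You prove boundedness directly via the integrating factor $t^\gamma$ rather than invoking Lemma~\ref{le_bounde}. You also discard the term $-(\gamma-2)t\bigl(\mathcal L(x,\lambda^\star)-\mathcal L(x^\star,\lambda^\star)\bigr)$, which the paper keeps in \eqref{eq:Edot} because Theorem~\ref{thm:rates} and the strict-regime analysis use it later.
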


\begin{proof}
We first establish the Lyapunov estimate on the maximal interval
$[t_0,T)$. From the primal equation in \eqref{eq:dy} and \eqref{eq_delta},
\begin{align}
  \frac{d}{dt}
  \bigl[
    \gamma(x(t)-x^\star)+t\dot x(t)
  \bigr]
  &=
  (\gamma+1)\dot x(t)+t\ddot x(t)\notag\\
  &=
  -\delta\dot x(t)
  -t\bigl(
    \nabla f(x(t))+J_g(x(t))^\top p(t)
  \bigr).
  \label{eq:Ix}
\end{align}
Similarly, the dual equation yields
\begin{align}  \label{eq:Il}
  \frac{d}{dt}
  \bigl[
    \gamma(\lambda(t)-\lambda^\star)
    +t\dot\lambda(t)
  \bigr]
  =
  -\delta\dot\lambda(t)
  +\frac{\sigma t}{\beta}
  \bigl(
    p(t)-\widehat\lambda(t)
  \bigr).
\end{align}
Moreover, by \eqref{eq:Lag} and $\frac{d}{dt}g(x(t))
  =
  J_g(x(t))\dot x(t)$, we have
\begin{align}
  &\frac{d}{dt}
  \left[
    t^2
    \bigl(
      \mathcal L(x(t),\lambda^\star)
      -\mathcal L(x^\star,\lambda^\star)
    \bigr)
  \right]\notag\\
  &\qquad=
  2t
  \bigl(
    \mathcal L(x(t),\lambda^\star)
    -\mathcal L(x^\star,\lambda^\star)
  \bigr)
  +
  t^2
  \ip{
    \nabla f(x(t))
    +J_g(x(t))^\top\lambda^\star
  }{
    \dot x(t)
  }.
  \label{eq:Ldot}
\end{align}

Differentiating \eqref{eq:E} and using
\eqref{eq:Ix}-\eqref{eq:Ldot}, we obtain
\begin{align}\label{eq:Eraw}
  \dot{{\mathcal E}}_{z^\star}(t)={}&
  2t
  \bigl(
    \mathcal L(x(t),\lambda^\star)
    -\mathcal L(x^\star,\lambda^\star)
  \bigr)+
  t^2
  \ip{
    \nabla f(x(t))
    +J_g(x(t))^\top\lambda^\star
  }{
    \dot x(t)
  }\notag\\
  &+
  \ip{
    \gamma(x(t)-x^\star)+t\dot x(t)
  }{
    -\delta\dot x(t)
    -t\bigl(
      \nabla f(x(t))
      +J_g(x(t))^\top p(t)
    \bigr)
  }\notag\\
  &+
  \frac1\sigma
  \ip{
    \gamma(\lambda(t)-\lambda^\star)
    +t\dot\lambda(t)
  }{
    -\delta\dot\lambda(t)
    +\frac{\sigma t}{\beta}
    \bigl(
      p(t)-\widehat\lambda(t)
    \bigr)
  }\\
  &+
  \gamma\delta
  \ip{x(t)-x^\star}{\dot x(t)}
  +
  \frac{\gamma\delta}{\sigma}
  \ip{\lambda(t)-\lambda^\star}{\dot\lambda(t)}.\notag
\end{align}
The terms involving $\gamma\delta$ cancel. By
\eqref{eq:hatL},
$
  \gamma(\lambda(t)-\lambda^\star)
  +t\dot\lambda(t)
  =
  \gamma(\widehat\lambda(t)-\lambda^\star)
$
and
\begin{align*}
  &t^2
  \ip{
    \nabla f(x(t))
    +J_g(x(t))^\top\lambda^\star
  }{
    \dot x(t)
  }
  -
  t^2
  \ip{
    \dot x(t)
  }{
    \nabla f(x(t))
    +J_g(x(t))^\top p(t)
  }\\
  &\qquad=
  t^2
  \ip{
    J_g(x(t))^\top(\lambda^\star-p(t))
  }{
    \dot x(t)
  }.
\end{align*}
Hence, from \eqref{eq:Eraw},
\begin{align}  \label{eq:Eexp}
  \dot{{\mathcal E}}_{z^\star}(t)={}&
  2t
  \bigl(
    \mathcal L(x(t),\lambda^\star)
    -\mathcal L(x^\star,\lambda^\star)
  \bigr)
  -\gamma t
  \ip{x(t)-x^\star}{\nabla f(x(t))}\notag\\
  &-\gamma t
  \ip{
    x(t)-x^\star
  }{
    J_g(x(t))^\top p(t)
  }
  +t^2
  \ip{
    J_g(x(t))^\top(\lambda^\star-p(t))
  }{
    \dot x(t)
  }\\
  &+
  \frac{\gamma t}{\beta}
  \ip{
    \widehat\lambda(t)-\lambda^\star
  }{
    p(t)-\widehat\lambda(t)
  }
  -\delta t
  \left(
    \|\dot x(t)\|^2
    +\frac1\sigma\|\dot\lambda(t)\|^2
  \right).\notag
\end{align}

We next estimate the objective and constraint terms. By the convexity of
$f$,
\begin{equation}\label{eq:fconv}
  \ip{\nabla f(x(t))}{x(t)-x^\star}
  \geq
  f(x(t))-f(x^\star).
\end{equation}
For each $i=1,\ldots,m$, the convexity of $g_i$ gives
\[
  g_i(x^\star)
  \geq
  g_i(x(t))
  +
  \ip{
    \nabla g_i(x(t))
  }{
    x^\star-x(t)
  }.
\]
Since $p(t)\in\R_+^m$ and $g(x^\star)\leq0$, multiplying by
$p_i(t)$ and summing over $i$ yields
\begin{equation}\label{eq:conv}
  -\ip{
    x(t)-x^\star
  }{
    J_g(x(t))^\top p(t)
  }
  \leq
  \ip{
    p(t)
  }{
    g(x^\star)-g(x(t))
  }
  \leq
  -\ip{p(t)}{g(x(t))}.
\end{equation}
Furthermore, by \eqref{eq:hatg},
\begin{align}
  t^2
  \ip{
    J_g(x(t))^\top(\lambda^\star-p(t))
  }{
    \dot x(t)
  }
  &=
  t^2
  \ip{
    \lambda^\star-p(t)
  }{
    J_g(x(t))\dot x(t)
  }\notag\\
  &=
  \gamma t
  \ip{
    \lambda^\star-p(t)
  }{
    \widehat g(t)-g(x(t))
  }.
  \label{eq:tangent}
\end{align}
Since the KKT conditions \eqref{eq:KKT} imply
$\ip{\lambda^\star}{g(x^\star)}=0$, we also have
\[
  \mathcal L(x(t),\lambda^\star)
  -
  \mathcal L(x^\star,\lambda^\star)
  =
  f(x(t))-f(x^\star)
  +
  \ip{\lambda^\star}{g(x(t))}.
\]
Substituting \eqref{eq:fconv}-\eqref{eq:tangent} into
\eqref{eq:Eexp}, collecting terms gives
\begin{align}
  \dot{{\mathcal E}}_{z^\star}(t)\leq{}&
  -(\gamma-2)t
  \bigl(
    \mathcal L(x(t),\lambda^\star)
    -\mathcal L(x^\star,\lambda^\star)
  \bigr)
  -\delta t
  \left(
    \|\dot x(t)\|^2
    +\frac1\sigma\|\dot\lambda(t)\|^2
  \right)\notag \\
  &+
  \gamma t
  \left[
    \ip{
      \widehat g(t)
    }{
      \lambda^\star-p(t)
    }
    +
    \frac1\beta
    \ip{
      \widehat\lambda(t)-\lambda^\star
    }{
      p(t)-\widehat\lambda(t)
    }
  \right]
    \label{eq:Eproj}
\end{align}
Since
$
  p(t)
  =\pos{
    \widehat\lambda(t)
    +
    \beta\widehat g(t)
  }=
  \operatorname{proj}_{\R_+^m}
  \bigl(
    \widehat\lambda(t)+\beta\widehat g(t)
  \bigr)
$
and $\lambda^\star\in\R_+^m$, Lemma~\ref{lem:proj} gives
\begin{equation}\label{eq:projchar}
  \ip{
    \widehat\lambda(t)
    +\beta\widehat g(t)
    -p(t)
  }{
    \lambda^\star-p(t)
  }
  \leq0.
\end{equation}
Moreover,
\[
  \ip{
    \widehat\lambda(t)-\lambda^\star
  }{
    p(t)-\widehat\lambda(t)
  }
  =
  \ip{
    \lambda^\star-p(t)
  }{
    \widehat\lambda(t)-p(t)
  }
  -
  \|p(t)-\widehat\lambda(t)\|^2.
\]
Hence, from \eqref{eq:projchar},
\begin{align} \label{eq:projest}
  &\ip{
    \widehat g(t)
  }{
    \lambda^\star-p(t)
  }
  +
  \frac1\beta
  \ip{
    \widehat\lambda(t)-\lambda^\star
  }{
    p(t)-\widehat\lambda(t)
  }
  \notag\\
  &\qquad=
  \frac1\beta
  \ip{
    \widehat\lambda(t)
    +\beta\widehat g(t)
    -p(t)
  }{
    \lambda^\star-p(t)
  }
  -
  \frac1\beta
  \|p(t)-\widehat\lambda(t)\|^2
  \\
  &\qquad \leq
  -\frac1\beta
  \|p(t)-\widehat\lambda(t)\|^2.\notag
\end{align}
Combining \eqref{eq:Eproj} and \eqref{eq:projest}, we obtain, for every $t\in[t_0,T)$,
\begin{align}\label{eq:Edot}
  \dot{{\mathcal E}}_{z^\star}(t)&\leq{}
  -(\gamma-2)t
  \bigl(
    \mathcal L(x(t),\lambda^\star)
    -\mathcal L(x^\star,\lambda^\star)
  \bigr)\nonumber \\
 & \quad-\delta t
  \left(
    \|\dot x(t)\|^2
    +\frac1\sigma\|\dot\lambda(t)\|^2
  \right)
  -\frac{\gamma t}{\beta}
  \|p(t)-\widehat\lambda(t)\|^2\\
  &\leq0.\nonumber
\end{align}
Therefore, ${\mathcal E_{z^\star}}$ is nonincreasing on $[t_0,T)$, and
$
  0\leq
  {\mathcal E_{z^\star}(t)}
  \leq
  {\mathcal E_{z^\star}(t_0)}
$ for every $t\in[t_0,T)$.

In particular, from \eqref{eq:E} and $\delta\ge0$,
\[
  \|\gamma(x(t)-x^\star)+t\dot x(t)\|
  \leq
  \sqrt{2{\mathcal E_{z^\star}(t_0)}},\quad
  \|\gamma(\lambda(t)-\lambda^\star)+t\dot\lambda(t)\|
  \leq
  \sqrt{2\sigma{\mathcal E_{z^\star}(t_0)}}.
\]
Applying Lemma~\ref{le_bounde} with $a(t)=t/\gamma$
to the primal and dual estimates above shows that
$x(t)$ and $\lambda(t)$ are bounded on $[t_0,T)$ and that
\begin{equation}\label{eq:tvel}
  \sup_{t\in[t_0,T)}
  t\|\dot x(t)\|
  <+\infty,
  \qquad
  \sup_{t\in[t_0,T)}
  t\|\dot\lambda(t)\|
  <+\infty.
\end{equation}
Moreover, since $g$ and $J_g$ are
continuous, \eqref{eq:dy}-\eqref{eq:para} imply that
$\widehat\lambda(t)$ and $\widehat g(t)$
are also bounded. Consequently,
$p(t)$ is bounded on $[t_0,T)$.

We now prove that the maximal solution is global. Suppose, by
contradiction, that $T<+\infty$. Since $x$, $\dot x$, $\lambda$,
$\dot\lambda$, $\hat\lambda$, and $p$ are bounded on $[t_0,T)$, and since
$\nabla f$ and $J_g$ are continuous, \eqref{eq:dy} implies that
$\ddot x$ and $\ddot\lambda$ are bounded on $[t_0,T)$. Consequently,
$x$, $\dot x$, $\lambda$, and $\dot\lambda$ admit finite limits as
$t\uparrow T$. Define
\[
  \bigl(
    x(T),\dot x(T),\lambda(T),\dot\lambda(T)
  \bigr)
  :=
  \lim_{t\uparrow T}
  \bigl(
    x(t),\dot x(t),\lambda(t),\dot\lambda(t)
  \bigr).
\]
Since $T>0$, Proposition~\ref{prop:local} guarantees a local solution
starting from these values at time $T$. This extends the original solution
beyond $T$, contradicting the maximality of $[t_0,T)$. Hence
$T=+\infty$, which proves~(i).

Consequently, \eqref{eq:Edot} holds for every $t\geq t_0$, and
${\mathcal E_{z^\star}}$ is nonincreasing on $[t_0,+\infty)$, proving~(ii).
The estimates  therefore hold on the entire interval
$[t_0,+\infty)$. In particular, $(x(t),\lambda(t))$ are bounded and, by
\eqref{eq:tvel},
$
  \|\dot x(t)\|+\|\dot\lambda(t)\|
  =
  \mathcal O(t^{-1}).
$
\end{proof}

\subsection{Convergence rate analysis}

To establish convergence rates for \eqref{ques_main}, we first derive an
$\mathcal O(t^{-2})$ bound for the nonlinear feasibility violation
$\|\pos{g(x(t))}\|$ from the dual projection structure. Combining this
estimate with the saddle-point property \eqref{eq:saddle} then yields the
same rate for $|f(x(t))-f^\star|$.

\begin{theorem}
\label{thm:rates}
Let $(x(t),\lambda(t))$ be the unique global solution of the dynamic
\eqref{eq:dy} and $(x^\star,\lambda^\star)\in\Omega$. Then, as
$t\to+\infty$,
\[
  \|\pos{g(x(t))}\|
  =
  \mathcal O\left(\frac{1}{t^{2}}\right),
  \qquad
  |f(x(t))-f^\star|
  =
  \mathcal O\left(\frac{1}{t^{2}}\right).
\]
\end{theorem}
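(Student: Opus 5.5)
The proof has two parts. First, a componentwise upper bound on $g(x(t))$ comes from integrating the dual equation in \eqref{eq:dy}. Second, the objective residual is sandwiched using the saddle-point inequality \eqref{eq:saddle} and the energy estimate of Theorem~\ref{thm:global}. The first part looks clean. The upper bound on $f(x(t))-f^\star$ has a gap, flagged below.

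\emph{Feasibility.} Since $p(t)=\pos{\widehat\lambda(t)+\beta\widehat g(t)}\ge\widehat\lambda(t)+\beta\widehat g(t)$ componentwise, we have $p(t)-\widehat\lambda(t)\ge\beta\widehat g(t)$. Set $u(t):=\gamma(\lambda(t)-\lambda^\star)+t\dot\lambda(t)$. Identity \eqref{eq:Il} gives $\dot u+\delta\dot\lambda=\frac{\sigma t}{\beta}(p-\widehat\lambda)\ge\sigma t\,\widehat g(t)$. The key identity is
\[
\frac{d}{dt}\bigl(t^\gamma g(x(t))\bigr)=\gamma t^{\gamma-1}\widehat g(t),
\]
which follows from \eqref{eq:hatg}. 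Hence, componentwise,
\[
\sigma\,\frac{d}{dt}\bigl(t^\gamma g(x(t))\bigr)\le\gamma t^{\gamma-2}\bigl(\dot u(t)+\delta\dot\lambda(t)\bigr).
\]
Integrate over $[t_0,t]$ and integrate by parts on the right. By Theorem~\ref{thm:global}(ii)--(iii), $u$ and $\lambda$ are bounded. For $\gamma\ge2$ the boundary terms are then $\mathcal O(t^{\gamma-2})$, and the remaining integrals $(\gamma-2)\int_{t_0}^t s^{\gamma-3}(\cdots)\,ds$ are also $\mathcal O(t^{\gamma-2})$; they vanish when $\gamma=2$. Therefore $t^\gamma g(x(t))\le C(1+t^{\gamma-2})$ componentwise. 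Taking positive parts and dividing by $t^\gamma$ yields $\|\pos{g(x(t))}\|=\mathcal O(t^{-2})$.

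\emph{Objective lower bound.} The first inequality to prove is $f(x(t))-f^\star\ge-C/t^2$. By \eqref{eq:saddle} with $x=x(t)$ and $\lambda^\star\ge0$,
\[
f(x(t))-f^\star\ge-\ip{\lambda^\star}{g(x(t))}\ge-\|\lambda^\star\|\,\|\pos{g(x(t))}\|.
\]
The feasibility bound then gives the claim.

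\emph{Objective upper bound.} The energy gives $t^2\bigl(f(x(t))-f^\star+\ip{\lambda^\star}{g(x(t))}\bigr)\le\mathcal E_{z^\star}(t_0)$. This is not enough by itself, since $-\ip{\lambda^\star}{g(x(t))}$ can be positive. I would instead rerun the Lyapunov computation of Theorem~\ref{thm:global} with the reference pair $(x^\star,0)$. That energy $\mathcal E_0$ has leading term $t^2(f(x(t))-f^\star)$. The derivation of \eqref{eq:Edot} still goes through, because it only used $\lambda^\star\in\R^m_+$ in the projection inequality \eqref{eq:projchar} and $g(x^\star)\le0$ in \eqref{eq:conv}. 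The result is
\[
\dot{\mathcal E}_0(t)\le-(\gamma-2)t\bigl(f(x(t))-f^\star\bigr).
\]
For $\gamma=2$ this gives $t^2(f(x(t))-f^\star)\le\mathcal E_0(t_0)$ directly.

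The main obstacle is $\gamma>2$. There $f-f^\star$ may be negative, and the lower bound above only gives $\dot{\mathcal E}_0\le C/t$, i.e.\ a logarithmic loss. The natural split is
\[
-(\gamma-2)t(f-f^\star)=-(\gamma-2)t\bigl(\mathcal L(x,\lambda^\star)-\mathcal L(x^\star,\lambda^\star)\bigr)+(\gamma-2)t\ip{\lambda^\star}{g(x)}.
\]
The first term is integrable by \eqref{eq:Edot}. The remaining task is an upper bound on $\int_{t_0}^t s\ip{\lambda^\star}{g(x(s))}\,ds$.

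I would first try the relation $s\,g=s\,\widehat g-\frac{s^2}{\gamma}\frac{d}{ds}g$, integrated by parts, together with the bounded integral $\int\sigma s\,\widehat g\le u+\delta\lambda\big|_{t_0}^t$. However, this produces a boundary term $-t^2\ip{\lambda^\star}{g(x(t))}/(\gamma-2)$, which has exactly the wrong sign, so this route is circular as stated.

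If it fails, the fallback is to exploit the dissipated quantity $\int s\|p-\widehat\lambda\|^2\,ds<\infty$ from \eqref{eq:Edot}. The aim would be a lower bound $\ip{\lambda^\star}{g(x(t))}\ge-C/t^2$ on the active indices $\{i:\lambda^\star_i>0\}$. I am not certain this can be closed.
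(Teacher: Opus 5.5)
Your feasibility argument and the objective lower bound are correct. The feasibility step is the paper's argument in unpacked form. The paper sets $G=t^2g(x(t))$, isolates the nonnegative term $\eta=\frac1\beta\pos{-\widehat\lambda-\beta\widehat g}$, and applies a Volterra lemma whose integrating factor $t^{\gamma-2}$ turns $G$ into exactly your $t^\gamma g(x(t))$. The genuine gap is the upper bound on $f(x(t))-f^\star$ for $\gamma>2$, which you leave open. The statement allows every $2\le\gamma\le\alpha-1$, so the proof is incomplete as it stands.

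The missing idea is that the inequality $\dot{\mathcal E}_0(t)\le-(\gamma-2)t\bigl(f(x(t))-f^\star\bigr)$ closes on $\mathcal E_0$ itself. You need neither a lower bound on $f-f^\star$ nor control of $\int s\ip{\lambda^\star}{g(x(s))}\,ds$. Write $\mathcal E_0(t)=t^2(f(x(t))-f^\star)+Q(t)$, where $Q$ collects the quadratic terms
\[
\tfrac12\|\gamma(x-x^\star)+t\dot x\|^2+\tfrac{\gamma\delta}{2}\|x-x^\star\|^2+\tfrac1{2\sigma}\|\gamma\lambda+t\dot\lambda\|^2+\tfrac{\gamma\delta}{2\sigma}\|\lambda\|^2.
\]
Theorem~\ref{thm:global} bounds $x$, $\lambda$, $t\dot x$ and $t\dot\lambda$, using the energy with a genuine KKT pair, so $0\le Q(t)\le K$. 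Substituting $t^2(f-f^\star)=\mathcal E_0-Q\ge\mathcal E_0-K$ yields
\[
\dot{\mathcal E}_0(t)\le-\frac{\gamma-2}{t}\bigl(\mathcal E_0(t)-K\bigr),
\qquad\text{i.e.}\qquad
\frac{d}{dt}\Bigl(t^{\gamma-2}\bigl(\mathcal E_0(t)-K\bigr)\Bigr)\le0.
\]
Hence $\mathcal E_0(t)\le K+(t_0/t)^{\gamma-2}\bigl(\mathcal E_0(t_0)-K\bigr)\le\max\{K,\mathcal E_0(t_0)\}$, and therefore $f(x(t))-f^\star\le\max\{K,\mathcal E_0(t_0)\}/t^2$.

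Your estimate loses a logarithm because it discards a damping effect. When $\mathcal E_0$ is large, $f-f^\star$ is automatically large and positive, so $-(\gamma-2)t(f-f^\star)$ damps $\mathcal E_0$ rather than feeding it. Bounding this term crudely by $(\gamma-2)C/t$ throws that away. Both of your fallbacks are therefore unnecessary: integrating $s\ip{\lambda^\star}{g}$ by parts, and extracting an active-set lower bound from $\int s\|p-\widehat\lambda\|^2\,ds$.
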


\begin{proof}
We first establish the constraint violation estimate. Using $\pos{y}-y=\pos{-y}$ and \eqref{eq:p}, define
\begin{equation}\label{eq:eta}
  \eta(t)
  :=
  \frac1\beta
  \bigl(
    p(t)-\widehat\lambda(t)-\beta\widehat g(t)
  \bigr)
  =
  \frac1\beta
  \pos{
    -\widehat\lambda(t)-\beta\widehat g(t)
  }
  \in\R_+^m.
\end{equation}
Differentiating \eqref{eq:hatL} and using the dual equation in
\eqref{eq:dy}, we obtain
\begin{equation}\label{eq:dotHL}
  \gamma\dot{\widehat\lambda}(t)
  =
  (\gamma+1)\dot\lambda(t)+t\ddot\lambda(t)
  =
  -\delta\dot\lambda(t)
  +
  \frac{\sigma t}{\beta}
  \bigl(
    p(t)-\widehat\lambda(t)
  \bigr)= -\delta\dot\lambda(t)
  +
  \sigma t\widehat g(t)
  +
  \sigma t\eta(t).
\end{equation}
By \eqref{eq:hatg},
$
  \gamma t\widehat g(t)
  =
  \gamma t g(x(t))
  +
  t^2\frac{d}{dt}g(x(t)).
$
Consequently, \eqref{eq:dotHL} gives
\begin{equation}\label{eq:dtg}
  t^2\frac{d}{dt}g(x(t))
  +
  \gamma t g(x(t))
  =
  \frac{\gamma^2}{\sigma}
  \dot{\widehat\lambda}(t)
  +
  \frac{\gamma\delta}{\sigma}
  \dot\lambda(t)
  -
  \gamma t\eta(t).
\end{equation}

Set
$G(t):=t^2g(x(t)).
$
Since
$
  \dot G(t)
  =
  2tg(x(t))
  +
  t^2\frac{d}{dt}g(x(t)),
$
\eqref{eq:dtg} becomes
\[
  \dot G(t)
  +
  \frac{\gamma-2}{t}G(t)
  =
  \frac{\gamma^2}{\sigma}
  \dot{\widehat\lambda}(t)
  +
  \frac{\gamma\delta}{\sigma}
  \dot\lambda(t)
  -
  \gamma t\eta(t).
\]
Integrating over $[t_0,t]$ yields
\begin{equation}\label{eq:Volterra}
  G(t)
  +
  \int_{t_0}^t
  \frac{\gamma-2}{s}G(s)\,ds
  =
  B(t)-M(t),
\end{equation}
where 
\begin{equation}\label{eq:defMB}
M(t)=
  \gamma
  \int_{t_0}^t
  s\eta(s)\,ds,\qquad  B(t)=
  t_0^2g(x(t_0))
  +
  \frac{\gamma^2}{\sigma}
  \bigl(
    \widehat\lambda(t)-\widehat\lambda(t_0)
  \bigr)
  +
  \frac{\gamma\delta}{\sigma}
  \bigl(
    \lambda(t)-\lambda(t_0)
  \bigr).
\end{equation}
By \eqref{eq:hatL} and Theorem~\ref{thm:global}, we can get that $\lambda(t)$ and $\widehat\lambda(t)$ are bounded,
and hence $B(t)$ is bounded. Moreover, \eqref{eq:eta} gives $\eta(t)\in\R_+^m$, so $M(t)$ is
componentwise nondecreasing, while $(\gamma-2)/t\geq0$.
Lemma~\ref{lem:volterra}(i), applied to \eqref{eq:Volterra}, therefore gives
$
  \sup_{t\geq t_0}
  \|\pos{G(t)}\|
  \leq
  2
  \sup_{t\geq t_0}
  \|B(t)\|
  <+\infty.
$
It follows that
\begin{equation}\label{eq:feasrate}
  \|\pos{g(x(t))}\|
  = \frac{1}{t^2}\|\pos{G(t)}\|=
  \mathcal O(t^{-2}).
\end{equation}

We next estimate the objective residual. Since the trajectory need not be
feasible, $f(x(t))-f^\star$ is not necessarily nonnegative, and its upper
and lower bounds are considered separately.

For the upper estimate,  we specialize the auxiliary energy
$\mathcal E_{z^\star}$ by taking
$
\lambda^\star=0\in\mathbb R^m_+,
$
that is, \(z^\star=(x^\star,0)\), and define
\begin{equation}\label{eq:E0}
  \mathcal E_0(t)
  :=
  \mathcal E_{z^\star}(t)\big|_{\lambda^\star=0}
  =
  t^2
  \bigl(
    f(x(t))-f^\star
  \bigr)
  +
  Q(t),
\end{equation}
with
\[
  Q(t)={}
  \frac12
  \|\gamma(x(t)-x^\star)+t\dot x(t)\|^2
  +
  \frac{\gamma\delta}{2}
  \|x(t)-x^\star\|^2+
  \frac1{2\sigma}
  \|\gamma\lambda(t)+t\dot\lambda(t)\|^2
  +
  \frac{\gamma\delta}{2\sigma}
  \|\lambda(t)\|^2.
\]
Note that the function $\mathcal E_0$ has the same structure as
${\mathcal E_{z^\star}}$ in \eqref{eq:E}, with the reference multiplier
$\lambda^\star$ replaced by $0$. Although $(x^\star,0)$ need not be a KKT pair, 
the calculation leading to \eqref{eq:Edot} can be
repeated with the reference multiplier equal to $0$,
since $x^\star$ is feasible and
$\ip{0}{g(x^\star)}=0$. The projection estimate remains valid
and yields
\[
  \dot{\mathcal E}_0(t)
  \leq
  -(\gamma-2)t\bigl(f(x(t))-f^\star\bigr).
\]
By Theorem~\ref{thm:global}, there exists $K>0$ such that
$
  0\leq
  Q(t)
  \leq
  K$ for every $t\geq t_0$. Using \eqref{eq:E0}, we obtain
\[
  \dot{\mathcal E}_0(t)
  \leq
  -\frac{\gamma-2}{t}
  \mathcal E_0(t)
  +
  \frac{(\gamma-2)K}{t}.
\]
Multiplying both sides by $t^{\gamma-2}$ gives
$
  \frac{d}{dt}
  \bigl(
    t^{\gamma-2}\mathcal E_0(t)
  \bigr)
  \leq
  (\gamma-2)Kt^{\gamma-3}.
$
Since $\gamma\geq2$, integration over $[t_0,t]$ yields
\[
  \mathcal E_0(t)
  \leq
  K
  +
  \left(
    \frac{t_0}{t}
  \right)^{\gamma-2}
  \bigl(
    \mathcal E_0(t_0)-K
  \bigr)
  \leq
  \max\{K,\mathcal E_0(t_0)\}.
\]
Since $Q(t)\geq0$, \eqref{eq:E0} gives
\begin{equation}\label{eq:upper}
    f(x(t))-f^\star
  =
  \frac{\mathcal E_0(t)-Q(t)}{t^2}
  \leq
  \frac{\mathcal E_0(t)}{t^2}
  \leq
  \frac{\max\{K,\mathcal E_0(t_0)\}}{t^2}.
\end{equation}

For the lower estimate, the saddle-point property \eqref{eq:saddle} and $\lambda^\star\in\R_+^m$ yield
\[
  f(x(t))-f^\star
  \geq
  -\ip{\lambda^\star}{g(x(t))}
  \geq
  -\ip{\lambda^\star}{\pos{g(x(t))}}.
\]
Hence, by \eqref{eq:feasrate}, there exists $C>0$ such that
\begin{equation}\label{eq:lower}
  f(x(t))-f^\star
  \geq
  -\|\lambda^\star\|
  \|\pos{g(x(t))}\|
  \geq
  -\frac{C}{t^2}.
\end{equation}
Combining \eqref{eq:upper} and \eqref{eq:lower} yields
$
  |f(x(t))-f^\star|
  =
  \mathcal O(t^{-2}).
$
\end{proof}

\begin{remark}
The proposed dynamic \eqref{eq:dy} and
Theorem~\ref{thm:rates} extend Nesterov-type inertial approaches for
unconstrained and linearly constrained convex optimization
\cite{SuJmlr,ZengTac,BotJde,HeSioc} to problem \eqref{ques_main}. They are
also distinct from the velocity-constrained dynamics of
\cite{MuehlebachMathProg}: here the inequality structure is represented by
an evolving multiplier and a projected PHR residual. For a genuinely nonlinear $g$, identity
\eqref{eq:hatg} supplies the replacement needed in the Lyapunov estimate.
The theorem therefore gives simultaneous $\mathcal O(t^{-2})$ rates for
the objective residual and nonlinear feasibility without strong
convexity.
\end{remark}

\subsection{Trajectory convergence and improved rates}

Throughout this subsection, we assume that
\[
  \alpha>3,
  \qquad
  2<\gamma<\alpha-1.
\]
Under these conditions, we prove convergence of the  primal-dual
trajectory and improve the objective and feasibility estimates from
$\mathcal O(t^{-2})$ to $o(t^{-2})$.

Let $(x(t),\lambda(t))$ be the unique global solution of
\eqref{eq:dy}, and write $z(t)=(x(t),\lambda(t))$.
For each $z^\star=(x^\star,\lambda^\star)\in\Omega$, set
\begin{equation}\label{eq_defd}
\begin{aligned}
  \Phi_{z^\star}(t)
  &:=
  \mathcal L(x(t),\lambda^\star)
  -\mathcal L(x^\star,\lambda^\star)\geq0,\\
  d(t)&:=p(t)-\widehat\lambda(t).
\end{aligned}
\end{equation}
The energy $\mathcal E_{z^\star}$ in \eqref{eq:E} is used throughout
this subsection with the same reference pair $z^\star$. From \eqref{eq:Edot},
\begin{equation}\label{eq:strict_diss}
  \dot{\mathcal E}_{z^\star}(t)
  \leq{}
  -(\gamma-2)t\Phi_{z^\star}(t)
  -\delta t
  \left(
    \|\dot x(t)\|^2
    +\frac1\sigma\|\dot\lambda(t)\|^2
  \right)-\frac{\gamma t}{\beta}\|d(t)\|^2,
  \qquad t\geq t_0.
\end{equation}
Since
$\gamma-2>0$ and $\delta=\alpha-\gamma-1>0$, integration of
\eqref{eq:strict_diss}   over
$[t_0,T]$ and letting $T\to+\infty$ gives 
\begin{equation}\label{eq:strict_integrability}
  \int_{t_0}^{+\infty}t\Phi_{z^\star}(t)\,dt<+\infty,
  \qquad
  \int_{t_0}^{+\infty}t\|\dot z(t)\|^2\,dt<+\infty,
  \qquad
  \int_{t_0}^{+\infty}t\|d(t)\|^2\,dt<+\infty.
\end{equation}
Moreover, $\mathcal E_{z^\star}$ is nonnegative and nonincreasing.
Consequently,
\begin{equation}\label{eq:energy_limit}
  \lim_{t\to+\infty}\mathcal E_{z^\star}(t)
  \quad\text{exists and is finite}.
\end{equation}
These properties hold for every $z^\star\in\Omega$.

We first identify the cluster points of the trajectory.

\begin{lemma}\label{lem:cluster}
Let $(x(t),\lambda(t))$ be the unique global solution of the dynamic
\eqref{eq:dy}. Then, every cluster point of $(x(t),\lambda(t))$ 
belongs to $\Omega$.
\end{lemma}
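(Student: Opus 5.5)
The plan is to fix a cluster point $\bar z=(\bar x,\bar\lambda)$, with $z(t_k)\to\bar z$ for some $t_k\to+\infty$. Pointwise information at $t_k$ is too weak: we only know $t\|\dot z(t)\|=\mathcal O(1)$, not $o(1)$, and $d(t)\to0$ only in an integrated sense. I will therefore average the dynamics over the windows $I_k:=[t_k,(1+\varepsilon)t_k]$ for a fixed $\varepsilon>0$, and use the integrability properties \eqref{eq:strict_integrability} on these windows. The goal is to verify the KKT system \eqref{eq:KKT} for $\bar z$ in the form
\[
\nabla f(\bar x)+J_g(\bar x)^\top\bar\lambda=0,
\qquad
\ip{g(\bar x)}{\mu-\bar\lambda}\le0\quad\forall\mu\in\R^m_+,
\qquad
\bar\lambda\ge0 .
\]
The variational inequality is equivalent to $\bar\lambda=\pos{\bar\lambda+\beta g(\bar x)}$, i.e.\ to $g(\bar x)\le0$ and $\ip{\bar\lambda}{g(\bar x)}=0$.

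\textbf{Step 1 (small oscillation and vanishing averaged velocities).} By Cauchy--Schwarz,
\[
\int_{I_k}\|\dot z\|\,ds\le\Bigl(\int_{I_k}s\|\dot z\|^2ds\Bigr)^{1/2}\bigl(\log(1+\varepsilon)\bigr)^{1/2}\to0,
\]
because the first factor is the tail of a convergent integral in \eqref{eq:strict_integrability}. Hence $\sup_{s\in I_k}\|z(s)-\bar z\|\to0$.

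Similarly,
\[
\frac{1}{\varepsilon t_k}\int_{I_k}s\|\dot z(s)\|\,ds
\le\frac{1}{\varepsilon t_k}\Bigl(\int_{I_k}s\|\dot z\|^2\Bigr)^{1/2}\Bigl(\int_{I_k}s\,ds\Bigr)^{1/2}
=\mathcal O(\varepsilon^{-1/2})\Bigl(\int_{I_k}s\|\dot z\|^2\Bigr)^{1/2}\to0 .
\]
The same computation with $\int_{t_0}^{\infty}t\|d\|^2<\infty$ shows that the averages of $\|d\|$ over $I_k$ tend to $0$. Writing $\langle\cdot\rangle_k$ for the average over $I_k$, these bounds give $\langle\|\widehat\lambda-\bar\lambda\|\rangle_k\to0$ and $\langle\|p-\bar\lambda\|\rangle_k\to0$, since $p=\widehat\lambda+d$. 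Using boundedness of $x$ and continuity of $J_g$, they also give $\langle\|\widehat g-g(\bar x)\|\rangle_k\to0$. In particular $\bar\lambda\ge0$, because $p(s)\in\R^m_+$.

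\textbf{Step 2 (stationarity).} I will average the primal equation of \eqref{eq:dy} over $I_k$. The term $\langle\ddot x\rangle_k=(\dot x((1+\varepsilon)t_k)-\dot x(t_k))/(\varepsilon t_k)$ is $\mathcal O(1/(\varepsilon t_k^2))$ by Theorem~\ref{thm:global}(iii), and $\langle\alpha\dot x/s\rangle_k=\mathcal O(t_k^{-2})$. Thus $\langle\nabla f(x)+J_g(x)^\top p\rangle_k\to0$.

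Since $x(s)\to\bar x$ uniformly on $I_k$, $\nabla f$ and $J_g$ are continuous, and $p$ is bounded with $\langle p\rangle_k\to\bar\lambda$, the limit is $\nabla f(\bar x)+J_g(\bar x)^\top\bar\lambda=0$.

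\textbf{Step 3 (complementarity).} This step carries the real difficulty, because the positive part does not commute with averaging. I avoid the problem by averaging the projection inequality instead of the projection itself. Lemma~\ref{lem:proj} with $p=\pos{\widehat\lambda+\beta\widehat g}$ gives, for every fixed $\mu\in\R^m_+$ and every $s$,
\[
\ip{\beta\widehat g(s)-d(s)}{\mu-p(s)}\le0 .
\]
Averaging over $I_k$, the $d$-term vanishes because $\mu-p$ is bounded and $\langle\|d\|\rangle_k\to0$. In the remaining term I will replace $\widehat g$ by $g(\bar x)$ at cost $\langle\|\widehat g-g(\bar x)\|\rangle_k\cdot\sup\|\mu-p\|\to0$. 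What is left is $\ip{g(\bar x)}{\mu-\langle p\rangle_k}\to\ip{g(\bar x)}{\mu-\bar\lambda}$, so $\ip{g(\bar x)}{\mu-\bar\lambda}\le0$ for all $\mu\ge0$.

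Together with Step 2 this shows $\bar z\in\Omega$. The only delicate point is the choice of windows of length proportional to $t_k$. On such windows the $\ddot x$ average decays like $1/(\varepsilon t_k^2)$, while the weighted integrals $\int s\|\dot z\|^2$ and $\int s\|d\|^2$ still control the averages of $s\|\dot z\|$ and $\|d\|$. Since $\varepsilon>0$ is fixed throughout, no limit $\varepsilon\to0$ is needed.
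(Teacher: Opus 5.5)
Your proposal is correct, and it follows the paper's proof in Steps 1 and 2: multiplicative windows $[t_k,\rho t_k]$, uniform smallness of $z(s)-\bar z$ on these windows, vanishing window averages of $s\|\dot z(s)\|$ and $\|d(s)\|$ via Cauchy--Schwarz against \eqref{eq:strict_integrability}, and averaging the primal equation to obtain stationarity.

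The genuine difference is how you get complementarity. The paper uses the second half of Lemma~\ref{lem:multiplicative_window}(i) to pick one time $s_k\in I_k$ with $s_k\|\dot z(s_k)\|+s_k\|d(s_k)\|\to0$. At that time $\widehat\lambda(s_k)\to\bar\lambda$, $\widehat g(s_k)\to g(\bar x)$ and $p(s_k)\to\bar\lambda$ hold pointwise. Continuity of the positive part then turns $p=\pos{\widehat\lambda+\beta\widehat g}$ into $\bar\lambda=\pos{\bar\lambda+\beta g(\bar x)}$, and this identity gives $\bar\lambda\ge0$ for free.

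You instead average the variational inequality $\ip{\beta\widehat g-d}{\mu-p}\le0$ for each fixed $\mu\in\R^m_+$. Because $p$ is bounded, this inequality survives window averaging even though $\pos{\cdot}$ does not commute with averages. So you need no point selection, and you do not use the continuity of $d$ on which that selection relies. The costs are small: you must derive $\bar\lambda\ge0$ separately from $p(s)\in\R^m_+$, and you end with the variational form of the projection rather than the fixed-point identity.

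Both routes are sound. Yours handles stationarity and complementarity with a single averaging mechanism. The paper's good-time selection is slightly shorter for the complementarity step, but it still needs averaging for stationarity, since $\ddot x$ is not controlled pointwise.
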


\begin{proof}
By Theorem~\ref{thm:global}, $z(t)$ is bounded.
Let $\bar z=(\bar x,\bar\lambda)$ be a cluster point, and choose
$t_k\to+\infty$ such that
\[
  z(t_k)\to\bar z.
\]
Fix $\rho>1$ and set $I_k=[t_k,\rho t_k]$.
By \eqref{eq:strict_integrability} and the continuity of $d(t)$,
Lemma~\ref{lem:multiplicative_window}(i)
applied with $u(t)=z(t)$ and $r(t)=d(t)$ gives
\begin{equation}\label{eq:uniform_window}
  \sup_{s\in I_k}\|z(s)-\bar z\|
  \leq
  \|z(t_k)-\bar z\|
  +\sup_{s\in I_k}\|z(s)-z(t_k)\|\to0, \quad \text{as } t_k\to+\infty 
\end{equation}
and there exists $s_k\in I_k$ such that
\[
   \lim_{s_k\to+\infty}   s_k\|\dot z(s_k)\|+s_k\|d(s_k)\| = 0.\]
 Since $s_k\in I_k$, \eqref{eq:uniform_window} gives
\[
  x(s_k)\to\bar x,\qquad
  \lambda(s_k)\to\bar\lambda.
\]
Moreover, the choice of $s_k$ implies
\[
  s_k\dot x(s_k)\to0,\qquad
  s_k\dot\lambda(s_k)\to0,\qquad
  d(s_k)=p(s_k)-\widehat\lambda(s_k)\to0,
\]
where the last limit follows from
$s_k\|d(s_k)\|\to0$ and $s_k\to+\infty$.
Thus, by \eqref{eq:hatL},
\[
  \widehat\lambda(s_k)
  =
  \lambda(s_k)+\frac1\gamma s_k\dot\lambda(s_k)
  \to \bar\lambda.
\]
The continuity of $g$ and $J_g$ gives
$g(x(s_k))\to g(\bar x)$ and boundedness of
$\{J_g(x(s_k))\}$. Hence, by \eqref{eq:hatg},
\[
  \|\widehat g(s_k)-g(\bar x)\|
  \leq
  \|g(x(s_k))-g(\bar x)\|
  +\frac1\gamma
   \|J_g(x(s_k))\|\,\|s_k\dot x(s_k)\|
  \to0.
\]
Finally, since $d=p-\widehat\lambda$,
\[
  p(s_k)=\widehat\lambda(s_k)+d(s_k)
  \to \bar\lambda.
\]
Consequently,
\[
  \widehat\lambda(s_k)\to\bar\lambda,
  \qquad
  \widehat g(s_k)\to g(\bar x),
  \qquad
  p(s_k)\to\bar\lambda.
\]
Passing to the limit in the projection identity \eqref{eq:p} gives
\begin{equation}\label{eq:cluster_projection}
  \bar\lambda
  =\pos{\bar\lambda+\beta g(\bar x)}.
\end{equation}
For completeness, \eqref{eq:cluster_projection} is equivalent,
componentwise, to
\[
  g(\bar x)\leq0,\qquad
  \bar\lambda\geq0,\qquad
  \ip{\bar\lambda}{g(\bar x)}=0.
\]
Indeed, if $\bar\lambda_i>0$, then
\eqref{eq:cluster_projection} gives $g_i(\bar x)=0$; if
$\bar\lambda_i=0$, it gives $g_i(\bar x)\leq0$.

It remains to verify stationarity. Set
$\ell_k=(\rho-1)t_k$, the length of $I_k=[t_k,\rho t_k]$.
We first show that
\begin{equation}\label{eq:average_p}
 \lim_{t_k\to+\infty}  \frac1{\ell_k}\int_{I_k}\|p(s)-\bar\lambda\|\,ds
  =0.
\end{equation}
It follows from \eqref{eq:hatL} and \eqref{eq_defd} that
$p(s)=\lambda(s)+(s/\gamma)\dot\lambda(s)+d(s)$, and hence
\begin{align*}
  \frac1{\ell_k}\int_{I_k}\|p(s)-\bar\lambda\|\,ds
  \leq
  \sup_{s\in I_k}\|\lambda(s)-\bar\lambda\|+\frac1{\gamma\ell_k}
    \int_{I_k}s\|\dot\lambda(s)\|\,ds
  +\frac1{\ell_k}\int_{I_k}\|d(s)\|\,ds.
\end{align*}
The first term tends to zero by \eqref{eq:uniform_window}.
For the other two terms, the Cauchy-Schwarz inequality gives
\begin{align*}
  \frac1{\ell_k}\int_{I_k}s\|\dot\lambda(s)\|\,ds
  &\leq
  \frac{\sqrt{(\rho^2-1)/2}}{\rho-1}
  \left(
    \int_{I_k}s\|\dot\lambda(s)\|^2\,ds
  \right)^{1/2}
  \to0,\\
  \frac1{\ell_k}\int_{I_k}\|d(s)\|\,ds
  &\leq
  \frac{\sqrt{\log\rho}}{\ell_k}
  \left(
    \int_{I_k}s\|d(s)\|^2\,ds
  \right)^{1/2}
  \to0,
\end{align*}
as $t_k\to+\infty$, where the limits follow from \eqref{eq:strict_integrability}.
This proves \eqref{eq:average_p}.

We next identify the limit of the averaged stationarity residual.
By \eqref{eq:uniform_window} and continuity,
\begin{equation}\label{eq_nablafto}
  \sup_{s\in I_k}
  \|\nabla f(x(s))-\nabla f(\bar x)\|\to0,
  \qquad
  \sup_{s\in I_k}
  \|J_g(x(s))-J_g(\bar x)\|\to0.
\end{equation}
Consequently, there exists $C_J>0$, independent of $k$,
such that
$
  \sup_{s\in I_k}\|J_g(x(s))\|\leq C_J
$
for all $k$. Using the identity
\[
  J_g(x(s))^\top p(s)-J_g(\bar x)^\top\bar\lambda
  =
  J_g(x(s))^\top(p(s)-\bar\lambda)
  +
  \bigl(J_g(x(s))-J_g(\bar x)\bigr)^\top\bar\lambda,
\]
and \eqref{eq:average_p}, we have
\begin{align*}
  &\left\|
    \frac1{\ell_k}\int_{I_k}
    J_g(x(s))^\top p(s)\,ds
    -J_g(\bar x)^\top\bar\lambda
  \right\|\\
  &\qquad\leq
  C_J\frac1{\ell_k}\int_{I_k}\|p(s)-\bar\lambda\|\,ds
  +
  \|\bar\lambda\|
  \sup_{s\in I_k}\|J_g(x(s))-J_g(\bar x)\|
  \to0.
\end{align*}
Similarly, from \eqref{eq_nablafto},
\[
  \left\|
    \frac1{\ell_k}\int_{I_k}\nabla f(x(s))\,ds
    -\nabla f(\bar x)
  \right\|
  \leq
  \sup_{s\in I_k}
  \|\nabla f(x(s))-\nabla f(\bar x)\|
  \to0.
\]
Consequently,
\begin{equation}\label{eq_sa11}
\lim_{t_k\to+\infty}  \frac1{\ell_k}\int_{I_k}
  \bigl(\nabla f(x(s))+J_g(x(s))^\top p(s)\bigr)\,ds
  =   \nabla f(\bar x)+J_g(\bar x)^\top\bar\lambda.
\end{equation}

On the other hand, integrating the primal equation in
\eqref{eq:dy} over $I_k$ yields
\begin{equation}\label{eq_sa12}
\frac1{\ell_k}\int_{I_k}
  \bigl(\nabla f(x(s))+J_g(x(s))^\top p(s)\bigr)\,ds=
  -\frac{\dot x(\rho t_k)-\dot x(t_k)}{\ell_k}
  -\frac{\alpha}{\ell_k}
   \int_{I_k}\frac{\dot x(s)}s\,ds.
\end{equation}
By Theorem~\ref{thm:global}, there exists $C_1>0$ such that $\|\dot x(t)\|\leq C_1/t$
for all $t$. Hence
\[
  \frac{\|\dot x(\rho t_k)-\dot x(t_k)\|}{\ell_k}
  \leq
  \frac{C_1(1+\rho^{-1})}{(\rho-1)t_k^2},
  \qquad
  \frac1{\ell_k}\int_{I_k}\frac{\|\dot x(s)\|}{s}\,ds
  \leq
  \frac{C_1}{\rho t_k^2}.
\]
This, together with \eqref{eq_sa11} and \eqref{eq_sa12}, implies
\[
  \nabla f(\bar x)+J_g(\bar x)^\top\bar\lambda=0.
\]
Together with \eqref{eq:cluster_projection}, this gives
$\bar z=(\bar x,\bar \lambda)\in\Omega$.
\end{proof}

We now establish convergence of the entire trajectory and the
improved rates.

\begin{theorem}\label{thm:trajectory_littleo}
Let $(x(t),\lambda(t))$ be the unique global solution of the dynamic
\eqref{eq:dy}. 
There exists $(x^\infty,\lambda^\infty)\in\Omega$ such that
\[
  (x(t),\lambda(t))
  \to(x^\infty,\lambda^\infty)
  \qquad\text{as }t\to+\infty.
\]
Moreover,
$
  \|\dot x(t)\|+\|\dot\lambda(t)\|=o(t^{-1})
$ and
\[
  \|\pos{g(x(t))}\|=o\left(\frac{1}{t^{2}}\right),
  \qquad
  |f(x(t))-f^\star|=o\left(\frac{1}{t^{2}}\right).
\]
\end{theorem}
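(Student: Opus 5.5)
The proof follows the Opial-type scheme for inertial dynamics, adapted to the primal-dual energy $\mathcal E_{z^\star}$. By Lemma~\ref{lem:cluster}, every cluster point of $z(t)$ lies in $\Omega$. By Opial's lemma, trajectory convergence therefore reduces to showing that $\lim_{t\to+\infty}\|z(t)-z^\star\|$ exists for every $z^\star\in\Omega$.

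\textbf{Step 1: existence of $\lim\|z(t)-z^\star\|$.} Fix $z^\star\in\Omega$ and set
\[
h(t):=\tfrac12\|x(t)-x^\star\|^2+\tfrac1{2\sigma}\|\lambda(t)-\lambda^\star\|^2 .
\]
Expanding the squares in \eqref{eq:E} gives
\[
\mathcal E_{z^\star}(t)=t^2\Phi_{z^\star}(t)+\gamma(\gamma-1+\delta)h(t)+\gamma t\dot h(t)+\tfrac{t^2}{2}\bigl(\|\dot x\|^2+\tfrac1\sigma\|\dot\lambda\|^2\bigr).
\]
The term $t^2\Phi_{z^\star}$ needs separate control, since only $\int t\Phi_{z^\star}<\infty$ is known. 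I would handle this by also introducing the energy with $\gamma$ replaced by a second admissible value $\gamma'\in(2,\alpha-1)$, which is available because the inequalities are strict. Subtracting the two energies eliminates $\Phi$ and the velocity-squared terms, leaving
\[
(\gamma-\gamma')\bigl[(\alpha-1)h+t\dot h\bigr]+\text{(cross terms)} .
\]
However, the dynamic itself depends on $\gamma$ through $p$, so this trick fails. Instead, I will argue that $W(t):=t^2\Phi_{z^\star}(t)+\tfrac{t^2}{2}\|\dot z\|_\sigma^2$ has a limit. Differentiating $W$ and using the equations, the derivative is controlled by the integrable quantities in \eqref{eq:strict_integrability}: terms like $t\Phi$, $t\|\dot z\|^2$, and $t\|d\|^2$, plus terms of the form $t\langle\lambda^\star-p,J_g\dot x\rangle$, which are bounded via the projection inequality \eqref{eq:projest}. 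The plan is to show $\dot W\le$ (an integrable function), which gives existence of $\lim W$. Since $\int W/t<\infty$ by \eqref{eq:strict_integrability}, that limit must be $0$.

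Given $\lim W=0$ and $\lim\mathcal E_{z^\star}$ existing, the quantity $q:=\gamma(\alpha-1)h+\gamma t\dot h$ has a limit. The ODE $t\dot h+(\alpha-1)h\to \ell$ then yields $h\to \ell/(\alpha-1)$ by the standard lemma (multiply by $t^{\alpha-2}$ and integrate). Opial's lemma now gives $z(t)\to z^\infty\in\Omega$.

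\textbf{Step 2: rates.} From $W\to0$ we get $t\|\dot z(t)\|\to0$, i.e.\ $\|\dot x\|+\|\dot\lambda\|=o(t^{-1})$, and $t^2\Phi_{z^\star}\to0$ for each $z^\star$, in particular for $z^\infty$.

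For feasibility, I revisit the Volterra identity \eqref{eq:Volterra}. Now $\widehat\lambda(t)=\lambda(t)+\tfrac t\gamma\dot\lambda(t)\to\lambda^\infty$, so $B(t)$ converges to some $B_\infty$. I would apply the finer part of Lemma~\ref{lem:volterra} (presumably part (ii)) to conclude that $[G(t)]_+\to0$ when $B$ converges and $M$ is nondecreasing. If that part is not available, I would argue componentwise: $G_i+\int\frac{\gamma-2}{s}G_i=B_i-M_i$ with $M_i$ nondecreasing. Either $M_i\to\infty$, which forces $G_i\to-\infty$ eventually, or $M_i$ converges, in which case the right-hand side converges and the resolvent kernel $(\gamma-2)/s$ forces $G_i\to0$ since $\gamma>2$. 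This gives $\|[g(x(t))]_+\|=o(t^{-2})$.

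For the objective, the upper bound comes from $f(x)-f^\star\le\Phi_{(x^\infty,\lambda^\infty)}-\langle\lambda^\infty,g(x)\rangle\le\Phi+\|\lambda^\infty\|\,\|[g]_-\|$. This is not immediately small, so I would use instead
\[
f-f^\star=\Phi-\langle\lambda^\infty,g(x)\rangle .
\]
By complementarity, $\lambda^\infty_i>0$ implies $g_i(x^\infty)=0$, and the same Volterra argument applied to $G_i$ gives $t^2g_i(x(t))\to0$ for such $i$: the component $p_i\approx\lambda^\infty_i>0$ keeps $\eta_i=0$ eventually, so $M_i$ is eventually constant. Hence $t^2\langle\lambda^\infty,g(x)\rangle\to0$, and $|f-f^\star|=o(t^{-2})$.

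\textbf{Main obstacle.} I expect the hardest step to be showing that $\lim W$ exists, which requires controlling the nonlinear coupling term $t\langle\lambda^\star-p,J_g\dot x\rangle$ without the tangent identity absorbing it, as happens in \eqref{eq:Eexp}.
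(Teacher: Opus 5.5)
\textbf{The gap is in Step~1.} Your whole argument rests on showing that $W(t)=t^2\Phi_{z^\star}(t)+\frac{t^2}{2}\|\dot z(t)\|_\sigma^2$ has a limit for every $z^\star\in\Omega$. The mechanism you propose, $\dot W\le$ an integrable function, does not follow from what is available. Differentiate $W$ along \eqref{eq:dy}, using $tJ_g(x)\dot x=\gamma(\widehat g-g(x))$, $t\dot\lambda=\gamma(\widehat\lambda-\lambda)$ and the projection inequality. What you actually get is
\[
\dot W\le 2t\Phi_{z^\star}-(\alpha-1)t\|\dot z\|_\sigma^2-\frac{\gamma t}{\beta}\|d\|^2+\frac{\gamma t}{\beta}\langle d,\lambda^\star-\lambda\rangle+\gamma t\langle p-\lambda^\star,g(x)\rangle .
\]
The last two terms are exactly what the anchor part $\gamma t\dot h+\gamma(\alpha-1)h$ of $\mathcal E_{z^\star}$ cancels in \eqref{eq:Edot}. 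It does so through convexity of $f,g$ and the $\lambda^\star$ in $\widehat\lambda-\lambda^\star$.

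Without that anchor, these terms carry a bare factor $t$. \eqref{eq:strict_integrability} controls $\int t\|d\|^2$ but not $\int t\|d\|$. Also, $\lambda-\lambda^\star$ is not small before convergence is known, and $g(x(t))$ has no known integrable decay (it does not decay at all in inactive components). Unlike the unconstrained case, there is no nonincreasing ``global energy'' $\Phi_{z^\star}+\frac12\|\dot z\|_\sigma^2$ to fall back on. Its derivative contains the sign-indefinite coupling $\langle\lambda^\star-p,J_g\dot x\rangle+\frac1\beta\langle\dot\lambda,d\rangle$. So the obstacle you flag is real and the proof stalls there. Your Step~2 claim that $t^2\Phi_{z^\star}\to0$ for every $z^\star$ rests on the same unproved step.

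\textbf{The missing idea is that $\lim W$ is not needed.} Your expansion gives $t\dot h+(\alpha-1)h=\frac1\gamma(\mathcal E_{z^\star}-W)$; note the $h$-coefficient there is $\gamma(\gamma+\delta)=\gamma(\alpha-1)$, not $\gamma(\gamma-1+\delta)$. Multiplying by $t^{\alpha-2}$ and integrating yields
\[
h(t)=\Big(\frac{t_0}{t}\Big)^{\alpha-1}h(t_0)+\frac{1}{\gamma t^{\alpha-1}}\int_{t_0}^t s^{\alpha-2}\mathcal E_{z^\star}(s)\,ds-\frac1\gamma\int_{t_0}^t\Big(\frac st\Big)^{\alpha-1}\frac{W(s)}{s}\,ds .
\]
The middle term tends to $\lim\mathcal E_{z^\star}/(\gamma(\alpha-1))$ by L'H\^opital. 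The last term tends to $0$ using only $W\ge0$ and $\int^{+\infty}W(s)/s\,ds<+\infty$ (split the integral at a large $T$). So $\lim h_{z^\star}$ exists for every $z^\star\in\Omega$, and Opial applies.

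Only afterwards, with $z^\star=z^\infty$, does $W$ vanish:
\begin{itemize}
\item $h_{z^\infty}\to0$ forces $\lim\mathcal E_{z^\infty}=0$;
\item boundedness of $t\dot z$ gives $t\dot h_{z^\infty}\to0$;
\item hence $W_{z^\infty}=\mathcal E_{z^\infty}-\gamma t\dot h_{z^\infty}-\gamma(\alpha-1)h_{z^\infty}\to0$.
\end{itemize}
This yields the $o(t^{-1})$ velocity and $t^2\Phi_{z^\infty}\to0$. From there your Step~2 coincides with the paper: Lemma~\ref{lem:volterra}(ii) for feasibility, and complementarity making $\eta_i$ eventually zero for the objective.

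One correction to your fallback feasibility argument: $M_i\to\infty$ does not force $G_i\to-\infty$. Logarithmic growth of $M_i$ leaves $G_i$ with a finite negative limit. Only $\limsup G_i\le0$ follows, which is exactly what Lemma~\ref{lem:volterra}(ii) provides.
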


\begin{proof}
We divide the proof into two steps.

\medskip
\noindent\emph{Step 1: convergence of the trajectory.}
For an arbitrary $z^\star=(x^\star,\lambda^\star)\in\Omega$,
define
\[
  h_{z^\star}(t)
  :=
  \frac12\|x(t)-x^\star\|^2
  +\frac1{2\sigma}\|\lambda(t)-\lambda^\star\|^2
\]
and
\begin{equation}\label{eq:defR}
  R_{z^\star}(t)
  :=
  t^2\Phi_{z^\star}(t)
  +\frac{t^2}{2}
  \left(
    \|\dot x(t)\|^2+\frac1\sigma\|\dot\lambda(t)\|^2
  \right),
\end{equation}
where $\Phi_{z^\star}(t)$ is defined in \eqref{eq_defd}.
Expanding the squares in \eqref{eq:E} and using
$\gamma+\delta=\alpha-1$, we obtain
\begin{equation}\label{eq:energy_distance}
  \mathcal E_{z^\star}(t)
  =
  R_{z^\star}(t)
  +\gamma t\dot h_{z^\star}(t)
  +\gamma(\alpha-1)h_{z^\star}(t).
\end{equation}
Furthermore, \eqref{eq:strict_integrability} gives
\begin{equation}\label{eq:trt}
  \int_{t_0}^{+\infty}\frac{R_{z^\star}(t)}t\,dt
  =
  \int_{t_0}^{+\infty}t\Phi_{z^\star}(t)\,dt
  +\frac12\int_{t_0}^{+\infty}t
  \left(
    \|\dot x(t)\|^2+\frac1\sigma\|\dot\lambda(t)\|^2
  \right)\,dt
  <+\infty.
\end{equation}
Rearranging \eqref{eq:energy_distance} gives
\[  t\dot h_{z^\star}(t)+(\alpha-1)h_{z^\star}(t)
  =
  \frac1\gamma\mathcal E_{z^\star}(t)
  -\frac1\gamma R_{z^\star}(t).
\]
Multiplying the identity by $t^{\alpha-2}$ and
integrating over $[t_0,t]$ gives
\begin{equation}\label{eq:hzteq}
  h_{z^\star}(t)
  =
  \left(\frac{t_0}{t}\right)^{\alpha-1}  h_{z^\star}(t_0)
  +\frac1{\gamma t^{\alpha-1}}\int_{t_0}^t s^{\alpha-2}\mathcal E_{z^\star}(s)\,ds
  -\frac1{\gamma t^{\alpha-1}}\int_{t_0}^t s^{\alpha-2} R_{z^\star}(s)\,ds.
\end{equation}
The first term tends to zero. For the last term, from \eqref{eq:trt}, for any $\varepsilon>0$, there exists $T>t_0$ such that
\[
\int_T^\infty
  \frac{R_{z^\star}(s)}s\,ds<\varepsilon.
\]
Since $\alpha>3$ and $R_{z^\star}\geq0$, for every $t\geq T$,
\begin{align*}
  \frac1{t^{\alpha-1}}
  \int_{t_0}^{t}s^{\alpha-2}R_{z^\star}(s)\,ds&=
  \frac1{t^{\alpha-1}}
  \int_{t_0}^{T}s^{\alpha-2}R_{z^\star}(s)\,ds
  +\int_T^t
  \left(\frac{s}{t}\right)^{\alpha-1}
  \frac{R_{z^\star}(s)}s\,ds\\
  &\leq
  \frac1{t^{\alpha-1}}
  \int_{t_0}^{T}s^{\alpha-2}R_{z^\star}(s)\,ds
  +\int_T^\infty\frac{R_{z^\star}(s)}s\,ds\\
  &\leq
  \frac1{t^{\alpha-1}}
  \int_{t_0}^{T}s^{\alpha-2}R_{z^\star}(s)\,ds
  +\varepsilon.
\end{align*}
 Since $T$ is fixed, taking the upper limit as $t\to+\infty$ gives
\[
  0\leq
  \limsup_{t\to+\infty}
  \frac1{t^{\alpha-1}}
  \int_{t_0}^{t}s^{\alpha-2}R_{z^\star}(s)\,ds
  \leq\varepsilon.
\]
As $\varepsilon>0$ is arbitrary, we conclude that
\begin{equation}\label{eq:Rlim}
  \lim_{t\to+\infty}
  \frac1{t^{\alpha-1}}
  \int_{t_0}^{t}s^{\alpha-2}R_{z^\star}(s)\,ds
  =0.
\end{equation}
From \eqref{eq:energy_limit}, we can let
$\mathcal E_\infty=\lim_{t\to+\infty}\mathcal E_{z^\star}(t)$. L'Hôpital's rule gives
\[
  \lim_{t\to+\infty}
  \frac1{t^{\alpha-1}}
  \int_{t_0}^t s^{\alpha-2}\mathcal E_{z^\star}(s)\,ds
  =\frac{\mathcal E_\infty}{\alpha-1}.
\]
Combining this limit with  \eqref{eq:hzteq} and
\eqref{eq:Rlim}  yields
\begin{equation}\label{eq:hlim}
  \lim_{t\to+\infty}h_{z^\star}(t)
  = \lim_{t\to+\infty}
  \frac{\int_{t_0}^t s^{\alpha-2}\mathcal E_{z^\star}(s)\,ds}{\gamma t^{\alpha-1}} 
=
  \frac{\mathcal E_\infty}{\gamma(\alpha-1)}
\end{equation}
Thus $h_{z^\star}(t)$ has a finite limit. 

 We apply Lemma~\ref{lem:opial} on
$\R^n\times\R^m$ equipped with the weighted norm
\[
  \|(x,\lambda)\|_\sigma
  :=
  \left(\|x\|^2+\frac1\sigma\|\lambda\|^2\right)^{1/2}.
\]
For every $z^\star\in\Omega$, $h_{z^\star}(t)$ has a finite limit. Since
\[
  \|z(t)-z^\star\|_\sigma
  =
  \sqrt{2h_{z^\star}(t)},
\]
the first condition of Lemma~\ref{lem:opial} is satisfied. Moreover, every cluster point belongs to $\Omega$
by Lemma~\ref{lem:cluster}.
Hence Lemma~\ref{lem:opial} yields
\[
  (x(t),\lambda(t))
  \to (x^\infty,\lambda^\infty)
  \quad\text{for some }(x^\infty,\lambda^\infty)\in\Omega.
\]

\medskip
\noindent\emph{Step 2: improved convergence rates.}
Take $z^\star=z^\infty$. By Theorem~\ref{thm:global},
$t\dot x(t)$ and $t\dot\lambda(t)$ are bounded.
Together with the convergence established in Step~1,
this gives
\[
  t\dot h_{z^\infty}(t)
  =
  \ip{x(t)-x^\infty}{t\dot x(t)}
  +\frac1\sigma
  \ip{\lambda(t)-\lambda^\infty}{t\dot\lambda(t)}
  \to0.
\]
Since $h_{z^\infty}(t)\to0$, equation \eqref{eq:hlim} yields
$\mathcal E_{z^\infty}(t)\to0$. Therefore, \eqref{eq:energy_distance}
implies
\begin{equation}\label{eq:R_to_zero}
  R_{z^\infty}(t)
  =
  \mathcal E_{z^\infty}(t)
  -\gamma t\dot h_{z^\infty}(t)
  -\gamma(\alpha-1)h_{z^\infty}(t)
  \to0.
\end{equation}
Since $z^\infty\in\Omega$,
$\mathcal L(x^\infty,\lambda^\infty)=f^\star$.
Moreover, both terms in \eqref{eq:defR} are nonnegative.
Hence
\[
  0\leq
  t^2\bigl(
    \mathcal L(x(t),\lambda^\infty)-f^\star
  \bigr)
  \leq R_{z^\infty}(t)
\]
and
\[
  0\leq
  \frac{t^2}{2}
  \left(
    \|\dot x(t)\|^2
    +\frac1\sigma\|\dot\lambda(t)\|^2
  \right)
  \leq R_{z^\infty}(t).
\]
Therefore, from \eqref{eq:R_to_zero},
\begin{equation}\label{eq:improved_gap_velocity}
  \mathcal L(x(t),\lambda^\infty)-f^\star=o(t^{-2}),\qquad
  \|\dot x(t)\|+\|\dot\lambda(t)\|=o(t^{-1}).
\end{equation}

We next establish the feasibility estimate.
As in the proof of Theorem~\ref{thm:rates}, set
$G(t)=t^2g(x(t))$. Equation~\eqref{eq:Volterra} gives
\[
  G(t)+(\gamma-2)\int_{t_0}^t\frac{G(s)}s\,ds
  =B(t)-M(t),
\]
where
$M(t)$
and
$B(t)$ are defined in \eqref{eq:defMB}. By Step~1 and \eqref{eq:improved_gap_velocity},
\[
  \widehat\lambda(t)
  =
  \lambda(t)+\frac{t}{\gamma}\dot\lambda(t)
  \to\lambda^\infty,
\]
so $B(t)$ has a finite limit. Since $\gamma-2>0$ and
$M$ is componentwise nondecreasing with $M(t_0)=0$,
Lemma~\ref{lem:volterra}(ii) yields
\[
 \lim_{t\to+\infty} t^2\|\pos{g(x(t))}\|
  =
  \lim_{t\to+\infty} \|\pos{G(t)}\|
  =0.
\]
Thus \[\|\pos{g(x(t))}\|=o(t^{-2}).\]

Finally, we prove the objective residual estimate.
The trajectory convergence,
\eqref{eq:improved_gap_velocity}, and
\eqref{eq:hatL}-\eqref{eq:hatg} give
\[
  \lim_{t\to+\infty}\widehat\lambda(t)=\lambda^\infty,
  \qquad
  \lim_{t\to+\infty}\widehat g(t)=g(x^\infty).
\]
For each component with $\lambda_i^\infty>0$,
\eqref{eq:KKT} gives $g_i(x^\infty)=0$. Hence
\[
  \lim_{t\to+\infty}
  \bigl(\widehat\lambda_i(t)+\beta\widehat g_i(t)\bigr)
  =\lambda_i^\infty>0.
\]
By \eqref{eq:eta}, there exists $T_i\geq t_0$ such that
$\eta_i(t)=0$ for all $t\geq T_i$. The definition of $M$
then gives
\[
  M_i(t)-M_i(T_i)
  =
  \gamma\int_{T_i}^t s\eta_i(s)\,ds
  =0,\qquad t\geq T_i.
\]
Since $B(t)$ has a finite limit and $\gamma>2$,
Lemma~\ref{lem:volterra}(ii), applied to
\eqref{eq:Volterra} with $c=\gamma-2$, yields
\[  \lim_{t\to+\infty}G_i(t)=0
  \qquad\text{whenever }\lambda_i^\infty>0.
\]
Consequently,
\[
  \lim_{t\to+\infty}\ip{\lambda^\infty}{G(t)}
  =
  \sum_{\{i:\lambda_i^\infty>0\}}
  \lambda_i^\infty\lim_{t\to+\infty}G_i(t)
  =0.
\]
Since $G(t)=t^2g(x(t))$, combining this limit with \eqref{eq:improved_gap_velocity}
and the identity
\[
  t^2\bigl(f(x(t))-f^\star\bigr)
  =
  t^2\bigl(\mathcal L(x(t),\lambda^\infty)-f^\star\bigr)
  -\ip{\lambda^\infty}{G(t)},
\]
we obtain
$\lim_{t\to+\infty}t^2(f(x(t))-f^\star)=0$.
This proves $|f(x(t))-f^\star|=o(t^{-2})$ and completes
the proof.
\end{proof}

\begin{remark}
Theorem~\ref{thm:trajectory_littleo} complements the
convergence results for unconstrained inertial dynamics
\cite{AttouchMathProg,MayTjm} and accelerated primal-dual dynamics with linear
constraints \cite{BotJde,HeLittleO}. In the affine case, the substitution
$\theta=1/\gamma$ turns $2<\gamma<\alpha-1$ into the strict
extrapolation range $1/(\alpha-1)<\theta<1/2$ used in
\cite{HeLittleO}; thus the parameter regime is consistent with the known
linear-constraint result. 
 \end{remark}
  
\begin{remark}
The accelerated properties of \eqref{eq:dy} can be extended with respect
to suitable perturbations of the primal dynamics. More precisely, consider
\begin{equation}\label{eq:dy_per}
\left\{
\begin{aligned}
    \ddot x(t)
    +\frac{\alpha}{t}\dot x(t)
    +\nabla f(x(t))
    +J_g(x(t))^\top p(t)
    &=\epsilon(t),\\
    \ddot\lambda(t)
    +\frac{\alpha}{t}\dot\lambda(t)
    -\frac{\sigma}{\beta}
    \bigl(
        p(t)-\widehat\lambda(t)
    \bigr)
    &=0,
\end{aligned}
\right.
\end{equation}
where $\epsilon:[t_0,+\infty)\to\R^n$ denotes an external perturbation.
Differentiating ${\mathcal E_{z^\star}}$ produces the additional term
$
    t\ip{
        \gamma(x(t)-x^\star)+t\dot x(t)
    }{
        \epsilon(t)
    }.
$
Following standard perturbation arguments for inertial primal-dual
dynamics \cite{AttouchMathProg,HeSioc,HeHeGuo}, this term suggests introducing the corrected energy
\[
    {\mathcal E_{\epsilon,z^\star}(t)}
    :=
    {\mathcal E_{z^\star}(t)}
    +
    \int_t^{+\infty}
    s\ip{
        \gamma(x(s)-x^\star)+s\dot x(s)
    }{
        \epsilon(s)
    }\,ds.
\]
Under the weighted integrability condition
$
    \int_{t_0}^{+\infty}t\norm{\epsilon(t)}\,dt<+\infty,
$
standard perturbation arguments for inertial dynamics can be used to
recover the preceding boundedness estimates; we do not pursue these
details here.
\end{remark}

 \section{Inexact accelerated primal-dual algorithms}
\label{sec:algorithm}

In this section, we develop an inexact accelerated primal-dual algorithm
for problem \eqref{ques_main} by an explicit-implicit discretization
of the perturbed dynamics
introduced in Section~\ref{sec:dynamics}. We first derive the accelerated
pointwise estimates and then, in the strict parameter range, prove
convergence of the complete sequence and the corresponding little-$o$
improvements. Throughout this section, we consider problem \eqref{ques_main} with composite objective:
\begin{equation}\label{eq:comp}
 \min_{x\in\R^n}\ f(x)=\phi(x)+h(x),\quad \text{s.t.}\quad
  g(x)\leq0,
\end{equation}
where $\phi:\R^n\to\R$ is convex and continuously differentiable with
$L_\phi$-Lipschitz continuous gradient, and
$h:\R^n\to(-\infty,+\infty]$ is proper, closed, and convex.
The constraint functions $g_i:\R^n\to\R$, $i=1,\ldots,m$, are convex
and continuously differentiable. For this composite problem, we assume
that the KKT set $\Omega$ is nonempty, with stationarity understood as
\[0\in\nabla\phi(x^\star)+\partial h(x^\star)
+J_g(x^\star)^\top\lambda^\star.\] This holds, for example, if an
optimal solution exists and a Slater point belongs to
$\operatorname{ri}(\operatorname{dom}h)$. We set
$(x^\star,\lambda^\star)\in\Omega$ and
$f^\star=f(x^\star)$. Throughout the remainder of the paper, we adopt the conventions
$
  \sum_{j=p}^{q} z_j:=0,
$ and 
  $\prod_{j=p}^{q} c_j:=1$,
whenever $q<p$ 
for all compatible summands $z_j$ and scalar factors $c_j$. For $s\in\R$, $\lfloor s\rfloor$ denotes the greatest integer
less than or equal to $s$. These conventions
will be used without further comment in the discrete analysis and in the
corresponding auxiliary lemmas.

\subsection{Explicit-implicit discretization and inexact algorithms}

 We first translate the velocity-corrected continuous coupling into a
computable recursion and then eliminate the implicit multiplier variable.

With the composite splitting
\eqref{eq:comp}, the perturbed primal equation in
\eqref{eq:dy_per} can be written as
\[
  \epsilon(t)\in
  \ddot x(t)
  +\frac{\alpha}{t}\dot x(t)
  +\nabla\phi(x(t))
  +\partial h(x(t))
  +J_g(x(t))^\top p(t).
\]
Let the fixed time step be $\sqrt{\tau}$ and use the common grid
$
  t_k=(k+\alpha-1)\sqrt{\tau}$, $
  x_k=x(t_k)$, $\lambda_k=\lambda(t_k)$.
Thus $t_0=(\alpha-1)\sqrt{\tau}>0$. At $t_k$, we have
\begin{align*}
  \ddot x(t_k)+\frac{\alpha}{t_k}\dot x(t_k)&\approx \frac{x_{k+1}-2x_k+x_{k-1}}{\tau}
   +\frac{\alpha}{(k+\alpha-1)\sqrt{\tau}}
    \frac{x_k-x_{k-1}}{\sqrt{\tau}}
   =\frac{x_{k+1}-\bar x_k}{\tau},\\
  \ddot \lambda(t_k)+\frac{\alpha}{t_k}\dot \lambda(t_k)&\approx\frac{\lambda_{k+1}-2\lambda_k+\lambda_{k-1}}{\tau}
   +\frac{\alpha}{(k+\alpha-1)\sqrt{\tau}}
    \frac{\lambda_k-\lambda_{k-1}}{\sqrt{\tau}}
   =\frac{\lambda_{k+1}-\bar\lambda_k}{\tau},
 \end{align*}
where
\begin{equation}\label{eq:iner}
\begin{aligned}
  (\bar x_k,\bar\lambda_k)
   =( x_k,\lambda_k)+\frac{k-1}{k+\alpha-1}(( x_k,\lambda_k)-(x_{k-1},\lambda_{k-1}))
\end{aligned}
\end{equation}
are the Nesterov-type inertial extrapolations. We discretize the velocity corrections at $t_k$ using forward differences
over $[t_k,t_{k+1}]$. The smooth term $\nabla\phi$ is evaluated
explicitly at $\bar x_k$, whereas $\partial h$ and the PHR coupling are
treated implicitly at $x_{k+1}$. Thus the scheme combines an explicit
gradient evaluation with an implicit primal step. The multiplier coupling
will be eliminated below, so that only one strongly convex primal
subproblem remains, followed by closed-form multiplier updates. The
resulting explicit-implicit counterpart of
\eqref{eq:dy_per} is
\begin{equation}\label{eq:dis}
\left\{
\begin{aligned}
\mathrm{(a)}\quad&
\epsilon_{k+1}\in
\frac{x_{k+1}-\bar x_k}{\tau}
+\nabla\phi(\bar x_k)
+\partial h(x_{k+1})
+J_g(x_{k+1})^\top p_{k+1},
\\[0.25em]
\mathrm{(b)}\quad&
\frac{\lambda_{k+1}-\bar\lambda_k}{\tau}
=
\frac{\sigma}{\beta}
\bigl(p_{k+1}-\widehat\lambda_{k+1}\bigr),
\\[0.25em]
\mathrm{(c)}\quad&
\widehat\lambda_{k+1}
=
\lambda_k+\frac{k+\alpha-1}{\gamma}
(\lambda_{k+1}-\lambda_k),
\\[0.25em]
\mathrm{(d)}\quad&
\widehat g_{k+1}
=
g(x_k)+\frac{k+\alpha-1}{\gamma}
\bigl(g(x_{k+1})-g(x_k)\bigr),
\\[0.25em]
\mathrm{(e)}\quad&
p_{k+1}
=
\pos{
  \widehat\lambda_{k+1}
  +\beta\widehat g_{k+1}
}.
\end{aligned}
\right.
\end{equation}
Here $\epsilon_{k+1}$ represents the discrete primal perturbation.

To eliminate the implicit corrected multiplier
$\widehat\lambda_{k+1}$, define
\begin{equation}\label{eq:dual}
  \widetilde\lambda_k
  :=
  \lambda_k+\frac{k-1}{\gamma}
  (\lambda_k-\lambda_{k-1}),
  \qquad
  c_{k+1}
  :=
  \beta+\frac{\sigma\tau(k+\alpha-1)}{\gamma}.
\end{equation}
Substituting \eqref{eq:dis}\textnormal{(c)} into
\eqref{eq:dis}\textnormal{(b)} gives
\[
  \lambda_{k+1}
  =
  \lambda_k
  +\frac{\beta}{c_{k+1}}
  (\bar\lambda_k-\lambda_k)
  +\frac{\sigma\tau}{c_{k+1}}
  (p_{k+1}-\lambda_k).
\]
By \eqref{eq:iner},
\eqref{eq:dis}\textnormal{(c)}, and
\eqref{eq:dual},
\[
\begin{aligned}
  \widehat\lambda_{k+1}
  &=
  \lambda_k
  +
  \frac{\beta}{c_{k+1}}
  \frac{k+\alpha-1}{\gamma}
  (\bar\lambda_k-\lambda_k)
  +
  \frac{\sigma\tau(k+\alpha-1)}
     {\gamma c_{k+1}}
  (p_{k+1}-\lambda_k)\\
  &=
  \frac{\beta}{c_{k+1}}\widetilde\lambda_k
  +
  \left(
    1-\frac{\beta}{c_{k+1}}
  \right)p_{k+1}.
\end{aligned}
\]
Substituting this identity into
\eqref{eq:dis}\textnormal{(e)} gives
\[
	 p_{k+1}
  =
  \proj_{\R_+^m}
  \left(
    \frac{\beta}{c_{k+1}}
    \bigl(
      \widetilde\lambda_k
      +c_{k+1}\widehat g_{k+1}
    \bigr)
    +
    \left(
      1-\frac{\beta}{c_{k+1}}
    \right)p_{k+1}
  \right).
\]
Since $\beta/c_{k+1}\in(0,1)$, Lemma~\ref{lem:proj} with
$\theta=\beta/c_{k+1}$ yields
\begin{equation}\label{eq:def_pk}
  p_{k+1}
  =
  \proj_{\R_+^m}
  \bigl(
    \widetilde\lambda_k
    +c_{k+1}\widehat g_{k+1}
  \bigr)
  =
  \pos{
    \widetilde\lambda_k
    +c_{k+1}\widehat g_{k+1}
  }.
\end{equation}

For $x\in\R^n$, define
\begin{equation}\label{eq:dis_ghat}
  \widehat g_{k+1}(x)
  :=
  g(x_k)
  +\frac{k+\alpha-1}{\gamma}
  \bigl(g(x)-g(x_k)\bigr)
\end{equation}
and
\begin{equation}\label{eq:Theta}
\begin{aligned}
  \Theta_{k+1}(x)
  :={}&
  h(x)
  +\frac1{2\tau}
  \norm{
    x-\bigl(\bar x_k-\tau\nabla\phi(\bar x_k)\bigr)
  }^2\\
  &+
  \frac{\gamma}
     {2c_{k+1}(k+\alpha-1)}
  \norm{
    \pos{
      \widetilde\lambda_k
      +c_{k+1}\widehat g_{k+1}(x)
    }
  }^2.
\end{aligned}
\end{equation}
Since each component of $\widehat g_{k+1}$ is convex and the function
$s\mapsto [s]_+^2$ is convex and nondecreasing, the last term in
\eqref{eq:Theta} is convex. Together with the $1/\tau$-strongly convex
quadratic term and the convexity of $h$, this shows that
$\Theta_{k+1}$ is $1/\tau$-strongly convex. Moreover, direct
differentiation of the smooth part gives
\begin{equation}\label{eq:part_theta}
    \partial\Theta_{k+1}(x)
    =
    \partial h(x)
    +\frac1\tau
    \bigl[x-(\bar x_k-\tau\nabla\phi(\bar x_k))\bigr]
    +J_g(x)^\top
    \pos{\widetilde\lambda_k+c_{k+1}\widehat g_{k+1}(x)}.
\end{equation}
 Motivated by the perturbation in
\eqref{eq:dis}\textnormal{(a)}, we solve the
primal subproblem inexactly. Let
$\{\varepsilon_k\}_{k\geq2}\subset\R_+$. At iteration $k$, we compute
$x_{k+1}\in\operatorname{dom}h$ satisfying
\begin{equation}\label{eq:sub}
  \operatorname{dist}
  \bigl(
    0,\partial\Theta_{k+1}(x_{k+1})
  \bigr)
  \leq\varepsilon_{k+1}.
\end{equation}
By \eqref{eq:sub}, there exists
$\epsilon_{k+1}\in\partial\Theta_{k+1}(x_{k+1})$ such that
$\norm{\epsilon_{k+1}}\leq\varepsilon_{k+1}$.
Using \eqref{eq:def_pk} and \eqref{eq:part_theta}, this choice of
$\epsilon_{k+1}$ yields exactly the perturbed primal inclusion
\eqref{eq:dis}\textnormal{(a)}. Let $\bar{x}=\arg\min_{x\in\R^n}\Theta_{k+1}(x)$ denote the unique exact minimizer of
$\Theta_{k+1}(x)$. Since $\Theta_{k+1}(x)$ is $1/\tau$-strongly convex and $\epsilon_{k+1}\in\partial\Theta_{k+1}(x_{k+1})$,
\[
  \Theta_{k+1}(\bar{x})
  \geq
  \Theta_{k+1}(x_{k+1})
  +
  \ip{\epsilon_{k+1}}
    {\bar{x}-x_{k+1}}
  +
  \frac{1}{2\tau}
  \norm{\bar{x}-x_{k+1}}^2.
\]
Hence, using $\norm{\epsilon_{k+1}}\leq\varepsilon_{k+1}$ and
$ab\leq\frac{1}{2\tau}a^2+\frac{\tau}{2}b^2$
  for all $a,b\geq0$,
we obtain
\[
\begin{aligned}
  \Theta_{k+1}(x_{k+1})
  -\Theta_{k+1}(\bar{x})
  &\leq
  \norm{\epsilon_{k+1}}
  \norm{x_{k+1}-\bar{x}}
  -
  \frac{1}{2\tau}
  \norm{x_{k+1}-\bar{x}}^2\leq
  \frac{\tau}{2}\varepsilon_{k+1}^2.
\end{aligned}
\]
Therefore,
\[
  \Theta_{k+1}(x_{k+1})
  \leq
  \min_{x\in\R^n}\Theta_{k+1}(x)
  +
  \frac{\tau}{2}\varepsilon_{k+1}^2,
\]
so $x_{k+1}$ is a
$\frac{\tau}{2}\varepsilon_{k+1}^2$-optimal solution of the primal
subproblem. The residual condition \eqref{eq:sub} is the stopping
criterion used in the analysis; the displayed objective-gap bound is a
consequence of this condition and does not, by itself, imply it.
When $\varepsilon_{k+1}=0$, the exact update is recovered.

Combining the above relations, the discrete system \eqref{eq:dis} leads to the following inexact
accelerated primal-dual algorithm (Algorithm \ref{alg:inexact}). The parameters
are chosen under the same
conditions \eqref{eq:para} as the dynamic \eqref{eq:dy}, and the stepsize satisfies
$
  0<\tau\leq 1/L_\phi,
$ as in the standard proximal gradient methods.
By construction, for each inexact primal update there exists
$\epsilon_{k+1}\in\R^n$ with
$\norm{\epsilon_{k+1}}\leq\varepsilon_{k+1}$ such that the iterates
generated by Algorithm~\ref{alg:inexact} satisfy the perturbed discrete
dynamic \eqref{eq:dis}. In the convergence analysis below, we associate the iterates of
Algorithm~\ref{alg:inexact} with such a perturbation sequence
$\{\epsilon_{k+1}\}$ and work directly with \eqref{eq:dis}.

\begin{algorithm}[htbp]
\caption{Inexact accelerated explicit-implicit primal-dual algorithm}
\label{alg:inexact}
\begin{algorithmic}[1]
\Require
$x_0=x_1\in\operatorname{dom}h$,
$\lambda_0=\lambda_1\in\R^m$;
$\alpha\geq3$, $2\leq\gamma\leq\alpha-1$,
$0<\tau\leq1/L_\phi$, $\sigma>0$, $\beta>0$;
$\{\varepsilon_k\}_{k\geq2}\subset\R_+$.
\For{$k=1,2,\ldots$}
  \State Compute
  $\displaystyle
    \bar x_k\gets
    x_k+\frac{k-1}{k+\alpha-1}(x_k-x_{k-1})$.

  \State Compute
  $\displaystyle
    \bar\lambda_k\gets
    \lambda_k+\frac{k-1}{k+\alpha-1}
    (\lambda_k-\lambda_{k-1})$
  and
  $\displaystyle
    \widetilde\lambda_k\gets
    \lambda_k+\frac{k-1}{\gamma}
    (\lambda_k-\lambda_{k-1})$.

  \State Set
  $\displaystyle
    c_{k+1}\gets
    \beta+\frac{\sigma\tau(k+\alpha-1)}{\gamma}$, define $\widehat g_{k+1}(x)$ in \eqref{eq:dis_ghat} and $\Theta_{k+1}(x)$ in \eqref{eq:Theta}.

  \State Find $x_{k+1}\in\operatorname{dom}h$ such that
  \eqref{eq:sub} holds. Then
  $x_{k+1}$ is a
  $\frac{\tau}{2}\varepsilon_{k+1}^2$-optimal solution of
  $\min_{x\in\R^n}\Theta_{k+1}(x)$, i.e.
  \[
    \Theta_{k+1}(x_{k+1})
    \leq
    \min_{x\in\R^n}\Theta_{k+1}(x)
    +\frac{\tau}{2}\varepsilon_{k+1}^2.
  \]

  \State Set
  $\displaystyle
    \widehat g_{k+1}\gets
    \widehat g_{k+1}(x_{k+1})$
  and
  $\displaystyle
    p_{k+1}\gets
    \pos{
      \widetilde\lambda_k
      +c_{k+1}\widehat g_{k+1}
    }$.

  \State Update
  $\displaystyle
    \lambda_{k+1}\gets
    \lambda_k
    +\frac{\beta}{c_{k+1}}
    (\bar\lambda_k-\lambda_k)
    +\frac{\sigma\tau}{c_{k+1}}
    (p_{k+1}-\lambda_k)$.
\EndFor
\end{algorithmic}
\end{algorithm}

 \begin{remark}
The primal subproblem in Algorithm~\ref{alg:inexact} is
$1/\tau$-strongly convex and therefore has a unique minimizer. Its 
solution can be adapted to the regularity of the constraint functions. If
each $\nabla g_i$ is Lipschitz continuous, then the smooth part of
$\Theta_{k+1}$ has a Lipschitz continuous gradient on every bounded sublevel
set. Hence, when $\operatorname{prox}_{h}$ is easy to evaluate, proximal-gradient
or accelerated proximal-gradient methods with backtracking, such as FISTA
\cite{BeckSiis}, can be used as inner solvers. The returned point must satisfy the
subgradient-residual condition \eqref{eq:sub}; an objective-gap stopping
rule alone does not suffice. A related use of Nesterov-type first-order methods for
nonlinear inequality-constrained augmented Lagrangian subproblems appears in
\cite{XuMathProg}. If the gradients $\nabla g_i$ are only locally Lipschitz
continuous, the same property holds on bounded sublevel sets, and adaptive
accelerated proximal-gradient methods for locally Lipschitz smooth functions
can be employed; see \cite{LuMeiSiopt}. When the relevant smooth gradient is H\"older
continuous, universal accelerated first-order methods provide another
possible choice \cite{Nesterov15}. 
\end{remark}

\subsection{Convergence rates analysis}

 Having related Algorithm~\ref{alg:inexact} to the perturbed discrete
system \eqref{eq:dis}, we now construct the discrete counterpart of the
continuous Lyapunov function and derive the baseline accelerated rates.

 Let $\{(x_k,\lambda_k)\}_{k\geq0}$ be generated by
Algorithm~\ref{alg:inexact}, and let
$(x^\star,\lambda^\star)\in\Omega$ be a KKT pair of
\eqref{ques_main}. Set $z^\star=(x^\star,\lambda^\star)$. For $k\geq1$, define
\begin{equation}\label{eq:aux}
\begin{aligned}
  \ell_k
  &:=
  \mathcal L(x_k,\lambda^\star)
  -\mathcal L(x^\star,\lambda^\star),\\
  U_k
  &:=
  \gamma(x_k-x^\star)
  +(k-1)(x_k-x_{k-1}),\\
  V_k
  &:=
  \gamma(\lambda_k-\lambda^\star)
  +(k-1)(\lambda_k-\lambda_{k-1}).
\end{aligned}
\end{equation}
The discrete counterpart of the continuous-time energy is then
defined by
\begin{equation}\label{eq:dis_energy}
\begin{aligned}
  {\mathcal E_{k,z^\star}}:={}&
  \tau r_{k-1}^2\ell_k
  +\frac12\norm{U_k}^2
  +\frac{\gamma\delta}{2}\norm{x_k-x^\star}^2+\frac1{2\sigma}\norm{V_k}^2
  +\frac{\gamma\delta}{2\sigma}
   \norm{\lambda_k-\lambda^\star}^2,
\end{aligned}
\end{equation}
where
\begin{equation}\label{eq_rk}
	 r_k = k+\alpha-1,\quad \text{and}\quad \delta=\alpha-\gamma-1\geq0.
\end{equation}
By the saddle-point property \eqref{eq:saddle}, $\ell_k\geq0$.
Consequently, ${\mathcal E_{k,z^\star}}\geq0$ for every $k\geq1$.

The following lemma provides the one-step estimate for
the energy sequence.

\begin{lemma}
\label{lem:dis_est}
Let $\{(x_k,\lambda_k)\}_{k\geq0}$ be generated by
Algorithm~\ref{alg:inexact}, and let
$(x^\star,\lambda^\star)\in\Omega$. For every $k\geq1$,
\begin{align}
 {\mathcal E_{k+1,z^\star}-\mathcal E_{k,z^\star}}
 \leq{}&
 -\tau\bigl((\gamma-2)r_k+1\bigr)\ell_k
 \notag\\
 &
 -\frac{\delta}{2}(2k+\gamma+\delta)
 \left(
   \|x_{k+1}-x_k\|^2
   +\frac1\sigma\|\lambda_{k+1}-\lambda_k\|^2
 \right)
 \notag\\
 &-\frac{\tau\gamma r_k}{\beta}
 \|p_{k+1}-\widehat\lambda_{k+1}\|^2
 +\tau r_k
 \ip{\epsilon_{k+1}}
   {U_{k+1}+\delta(x_{k+1}-x_k)}.
 \label{eq:dis_full_est}
\end{align}
\end{lemma}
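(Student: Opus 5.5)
The plan is to mimic the continuous computation leading to \eqref{eq:Edot}, replacing derivatives by forward differences. First I would express the increments of $U_k$ and $V_k$ through the scheme. Using \eqref{eq:iner}, $r_k(x_{k+1}-\bar x_k)=r_k(x_{k+1}-x_k)-(k-1)(x_k-x_{k-1})$. Hence
\[
  U_{k+1}-U_k=(\gamma+1)(x_{k+1}-x_k)+(k-1)\bigl[(x_{k+1}-x_k)-(x_k-x_{k-1})\bigr]-\delta(x_{k+1}-x_k)+\ldots,
\]
and after simplification
\[
  U_{k+1}-U_k=-\delta(x_{k+1}-x_k)+r_k(x_{k+1}-\bar x_k).
\]
By \eqref{eq:dis}(a), $r_k(x_{k+1}-\bar x_k)=-\tau r_k\bigl(\nabla\phi(\bar x_k)+\xi_{k+1}+J_g(x_{k+1})^\top p_{k+1}-\epsilon_{k+1}\bigr)$ with $\xi_{k+1}\in\partial h(x_{k+1})$. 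Similarly, \eqref{eq:dis}(b),(c) give $V_{k+1}-V_k=-\delta(\lambda_{k+1}-\lambda_k)+\frac{\sigma\tau r_k}{\beta}(p_{k+1}-\widehat\lambda_{k+1})$. Moreover, $V_{k+1}+\delta(\lambda_{k+1}-\lambda_k)=\gamma(\widehat\lambda_{k+1}-\lambda^\star)$, because $\widehat\lambda_{k+1}=\lambda_k+\frac{r_k}{\gamma}(\lambda_{k+1}-\lambda_k)$ and $\gamma+k+\delta=r_k$. The analogous identity $U_{k+1}+\delta(x_{k+1}-x_k)=\gamma(x_k-x^\star)+r_k(x_{k+1}-x_k)$ holds on the primal side.

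Next I would expand the squares with the exact identity $\frac12\|a\|^2-\frac12\|b\|^2=\langle a,a-b\rangle-\frac12\|a-b\|^2$, evaluated at the new point. Combining the $\delta$ cross terms with the $\frac{\gamma\delta}{2}\|x_k-x^\star\|^2$ increments should produce the negative term $-\frac{\delta}{2}(2k+\gamma+\delta)\|x_{k+1}-x_k\|^2$, and the same for $\lambda$. The inner product with $\epsilon_{k+1}$ appears directly against $U_{k+1}+\delta(x_{k+1}-x_k)$, matching the last term of \eqref{eq:dis_full_est}.

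For the primal gradient term I would pair $-\tau r_k$ times the subgradient with $\gamma(x_k-x^\star)+r_k(x_{k+1}-x_k)$, that is, with the convex combination $\gamma(x_{k+1}-x^\star)+(r_k-\gamma)(x_{k+1}-x_k)$. The descent lemma with $\tau\le1/L_\phi$ and the convexity of $h$ give the composite inequality
\[
  \langle\nabla\phi(\bar x_k)+\xi_{k+1},y-x_{k+1}\rangle\le f(y)-f(x_{k+1})+\frac{1}{2\tau}\|x_{k+1}-\bar x_k\|^2\text{-type terms}.
\]
I would apply it at $y=x^\star$ with weight $\gamma$ and at $y=x_k$ with weight $r_k-\gamma$. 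The leftover quadratic terms should be absorbed by the $-\frac12\|U_{k+1}-U_k\|^2$ term.

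For the constraint, convexity of $g_i$ and $p_{k+1}\ge0$ give
\[
  \langle J_g(x_{k+1})^\top p_{k+1},x_{k+1}-y\rangle\ge\langle p_{k+1},g(x_{k+1})-g(y)\rangle .
\]
With weights $\gamma$ at $x^\star$ and $r_k-\gamma$ at $x_k$, this yields $\langle p_{k+1},\gamma g(x_k)+r_k(g(x_{k+1})-g(x_k))\rangle-\gamma\langle p_{k+1},g(x^\star)\rangle\ge\gamma\langle p_{k+1},\widehat g_{k+1}\rangle$ by \eqref{eq:dis}(d). This is the discrete tangent identity replacing \eqref{eq:tangent}. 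Adding the $\lambda^\star$ parts turns $f$ into $\mathcal L(\cdot,\lambda^\star)$, producing $\tau r_k^2\ell_{k+1}$ against $\tau r_k(r_k-\gamma)\ell_k$. Since $r_{k-1}^2\le r_k(r_k-\gamma)+\ldots$, the coefficient difference is $r_{k-1}^2-r_k^2+\gamma r_k=(\gamma-2)r_k+1$, which gives the first term of \eqref{eq:dis_full_est}.

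The dual part combines with $\gamma\tau r_k\langle\widehat g_{k+1},\lambda^\star-p_{k+1}\rangle$ exactly as in \eqref{eq:projest}. Lemma~\ref{lem:proj} applied to \eqref{eq:dis}(e) then yields $-\frac{\tau\gamma r_k}{\beta}\|p_{k+1}-\widehat\lambda_{k+1}\|^2$.

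I expect the main obstacle to be the bookkeeping of the quadratic remainders. The $-\frac12\|U_{k+1}-U_k\|^2$ and $-\frac1{2\sigma}\|V_{k+1}-V_k\|^2$ terms, the descent-lemma term $\frac{\tau r_k^2}{2}(\ldots)$, and the $\delta$-terms must combine with nonpositive sign. In particular, the dual quadratic $\frac{\sigma\tau^2r_k^2}{\beta^2}\|p_{k+1}-\widehat\lambda_{k+1}\|^2$ has to be discarded rather than used. I would first try evaluating the squares at the new point, so that such remainders appear with a negative sign and can simply be dropped.
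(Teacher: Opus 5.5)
Your proposal is correct and is essentially the paper's argument. It uses the same weights $\gamma$ and $r_k-\gamma$ at $x^\star$ and $x_k$, the tangent identity $r_kg(x_{k+1})-(r_k-\gamma)g(x_k)=\gamma\widehat g_{k+1}$, the relation $r_{k-1}^2=r_k(r_k-\gamma)+(\gamma-2)r_k+1$, the dual identity built on $V_{k+1}+\delta(\lambda_{k+1}-\lambda_k)=\gamma(\widehat\lambda_{k+1}-\lambda^\star)$, and Lemma~\ref{lem:proj}; you merely organize the computation from the energy difference rather than through the per-point inequality \eqref{eq:f_est}. One bookkeeping fix: the primal remainder you need is $-\frac12\|U_{k+1}-U_k+\delta(x_{k+1}-x_k)\|^2=-\frac{r_k^2}{2}\|x_{k+1}-\bar x_k\|^2$, not $-\frac12\|U_{k+1}-U_k\|^2$, and it cancels the descent-lemma term $\frac{\tau L_\phi r_k^2}{2}\|x_{k+1}-\bar x_k\|^2$ precisely because $\tau L_\phi\le1$.
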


\begin{proof}
By the inexact primal update
\eqref{eq:dis}\textnormal{(a)}, there exists
$\xi_{k+1}\in\partial h(x_{k+1})$ such that
\begin{equation}\label{eq:res}
  \epsilon_{k+1}
  =
  \frac{x_{k+1}-\bar x_k}{\tau}
  +\nabla\phi(\bar x_k)
  +\xi_{k+1}
  +J_g(x_{k+1})^\top p_{k+1}.
\end{equation}
Let $x\in\dom h$. The $L_\phi$-Lipschitz continuity of
$\nabla\phi$ gives
\[
  \phi(x_{k+1})
  \leq
  \phi(\bar x_k)
  +\ip{\nabla\phi(\bar x_k)}{x_{k+1}-\bar x_k}
  +\frac{L_\phi}{2}
   \norm{x_{k+1}-\bar x_k}^2,
\]
whereas the convexity of $\phi$ gives
$
  \phi(x)
  \geq
  \phi(\bar x_k)
  +\ip{\nabla\phi(\bar x_k)}{x-\bar x_k}.
$
Therefore,
\begin{equation}\label{eq:phi_est}
  \phi(x_{k+1})-\phi(x)
  \leq
  \ip{\nabla\phi(\bar x_k)}{x_{k+1}-x}
  +\frac{L_\phi}{2}
   \norm{x_{k+1}-\bar x_k}^2.
\end{equation}
Since $\xi_{k+1}\in\partial h(x_{k+1})$, we also have
\begin{equation}\label{eq:h_est}
  h(x_{k+1})-h(x)
  \leq
  \ip{\xi_{k+1}}{x_{k+1}-x}.
\end{equation}
Moreover, the convexity of $g_i$ gives
$
  g_i(x)
  \geq
  g_i(x_{k+1})
  +\ip{\nabla g_i(x_{k+1})}{x-x_{k+1}}.
$
Multiplying by $p_{k+1,i}\geq0$ and summing over
$i=1,\ldots,m$ yields
\begin{equation}\label{eq:g_est}
  \ip{p_{k+1}}{g(x_{k+1})-g(x)}
  \leq
  \ip{J_g(x_{k+1})^\top p_{k+1}}
    {x_{k+1}-x}.
\end{equation}

Adding \eqref{eq:phi_est}, \eqref{eq:h_est}, and \eqref{eq:g_est} and
using $f=\phi+h$, we obtain, for every $x\in\dom h$,
\begin{align}\label{eq:f_est}
  &f(x_{k+1})+\ip{p_{k+1}}{g(x_{k+1})}
   -f(x)-\ip{p_{k+1}}{g(x)}
  \notag\\
  &\quad\leq
  \ip{
    \nabla\phi(\bar x_k)
    +\xi_{k+1}
    +J_g(x_{k+1})^\top p_{k+1}
  }{
    x_{k+1}-x
  }
  +\frac{L_\phi}{2}
   \norm{x_{k+1}-\bar x_k}^2
  \\
  &\quad\overset{\eqref{eq:res}}{=}
  \ip{\epsilon_{k+1}}{x_{k+1}-x}
  -\frac1\tau
   \ip{x_{k+1}-\bar x_k}{x_{k+1}-x}
  +\frac{L_\phi}{2}
   \norm{x_{k+1}-\bar x_k}^2
  \notag\\
  &\quad=
  \frac1{2\tau}
  \left(
    \norm{\bar x_k-x}^2
    -\norm{x_{k+1}-x}^2
  \right)
  +\ip{\epsilon_{k+1}}{x_{k+1}-x}
  -\frac{1-\tau L_\phi}{2\tau}
   \norm{x_{k+1}-\bar x_k}^2
  \notag\\
  &\quad\leq
  \frac1{2\tau}
  \left(
    \norm{\bar x_k-x}^2
    -\norm{x_{k+1}-x}^2
  \right)
  +\ip{\epsilon_{k+1}}{x_{k+1}-x},\notag
\end{align}
where the last equality follows from $-2\ip{a}{b}
  =
  \norm{a-b}^2-\norm{a}^2-\norm{b}^2
$. The condition $\tau\leq1/L_\phi$ makes the last inequality valid.
Apply \eqref{eq:f_est} with $x=x^\star$ and $x=x_k$, and multiply the
resulting inequalities by $\tau\gamma r_k$ and
$\tau r_k(r_k-\gamma)$, respectively. Adding the two weighted
inequalities, we first simplify their left-hand side. By the definition of
$\widehat g_{k+1}$ in \eqref{eq:dis}(d),
\[
  r_k g(x_{k+1})
  -(r_k-\gamma)g(x_k)
  =
  \gamma\widehat g_{k+1}.
\]
Using this identity, the definition of $\ell_k$, and
$\ip{\lambda^\star}{g(x^\star)}=0$, the sum of the two weighted
left-hand sides, namely, $\tau\gamma r_k$ times \eqref{eq:f_est} with
$x=x^\star$ plus $\tau r_k(r_k-\gamma)$ times \eqref{eq:f_est} with
$x=x_k$, can be written as
\begin{align*}
  \tau r_k
  \Big[
    &r_k\ell_{k+1}
    -(r_k-\gamma)\ell_k
    +\gamma
    \ip{p_{k+1}-\lambda^\star}{\widehat g_{k+1}}
    -\gamma\ip{p_{k+1}}{g(x^\star)}
  \Big].
\end{align*}
Since $g(x^\star)\leq0$ and $p_{k+1}\geq0$, $
  \ip{p_{k+1}}{g(x^\star)}\leq0.
$
Moreover, since $r_{k-1}=r_k-1$,
\begin{equation}\label{eq_rk_eqa}
  r_k(r_k-\gamma)
  =
  r_{k-1}^2-\bigl((\gamma-2)r_k+1\bigr).
\end{equation}

It follows that the sum of the two weighted left-hand sides is bounded
below by
\begin{align*}
  &\tau r_k^2\ell_{k+1}
  -\tau r_{k-1}^2\ell_k
  +\tau\bigl((\gamma-2)r_k+1\bigr)\ell_k
  +\tau\gamma r_k
    \ip{p_{k+1}-\lambda^\star}{\widehat g_{k+1}}.
\end{align*}
Therefore, adding the two weighted inequalities obtained from
\eqref{eq:f_est} gives
\begin{align}\label{eq_left_est}
  &\tau r_k^2\ell_{k+1}
  -\tau r_{k-1}^2\ell_k
  +\tau\bigl(
    (\gamma-2)r_k+1
  \bigr)\ell_k
  +\tau\gamma r_k
    \ip{p_{k+1}-\lambda^\star}{\widehat g_{k+1}}
  \notag\\
  &\qquad \leq
  \frac{r_k}{2}
  \Big[
    \gamma
    \bigl(
      \norm{\bar x_k-x^\star}^2
      -\norm{x_{k+1}-x^\star}^2
    \bigr)
    +(r_k-\gamma)
    \bigl(
      \norm{\bar x_k-x_k}^2
      -\norm{x_{k+1}-x_k}^2
    \bigr)
  \Big]
  \notag\\
  &\qquad\quad+
  \tau r_k
  \ip{\epsilon_{k+1}}
    {\gamma(x_{k+1}-x^\star)
    +(r_k-\gamma)(x_{k+1}-x_k)}.
\end{align}

We next simplify the quadratic terms on the right-hand side of
\eqref{eq_left_est}. By the definition of $\bar x_k$,
$
  \bar x_k-x^\star
  =
  (x_k-x^\star)
  +\frac{k-1}{r_k}(x_k-x_{k-1}),
$
$
  \bar x_k-x_k
  =
  \frac{k-1}{r_k}(x_k-x_{k-1}),
$
and
$
  x_{k+1}-x^\star
  =
  (x_k-x^\star)+(x_{k+1}-x_k).
$
Hence,
\begin{align*}
  &\frac12
  \left(
    \norm{\bar x_k-x^\star}^2
    -\norm{x_{k+1}-x^\star}^2
  \right)
  \nonumber\\
  &\quad=
  \frac12
  \left(
    \norm{x_k-x^\star+\tfrac{k-1}{r_k}(x_k-x_{k-1})}^2
    -\norm{(x_k-x^\star)+(x_{k+1}-x_k)}^2
  \right)
  \notag\\
  &\quad=
  \frac{k-1}{r_k}
  \ip{x_k-x^\star}{x_k-x_{k-1}}
  -
  \ip{x_k-x^\star}{x_{k+1}-x_k}
  \notag\\
  &\qquad+
  \frac{(k-1)^2}{2r_k^2}
  \norm{x_k-x_{k-1}}^2
  -
  \frac12\norm{x_{k+1}-x_k}^2,
  \end{align*}
and
\[
  \frac12
  \left(
    \norm{\bar x_k-x_k}^2
    -\norm{x_{k+1}-x_k}^2
  \right)
  =
  \frac{(k-1)^2}{2r_k^2}
  \norm{x_k-x_{k-1}}^2
  -
  \frac12\norm{x_{k+1}-x_k}^2.
\]
 On the other hand, by the definition of $U_k$ and \eqref{eq_rk},
\[
\begin{aligned}
  \frac12\norm{U_k}^2
  =
  \frac{\gamma^2}{2}\norm{x_k-x^\star}^2
  +\gamma(k-1)
  \ip{x_k-x^\star}{x_k-x_{k-1}}+
  \frac{(k-1)^2}{2}
  \norm{x_k-x_{k-1}}^2,
\end{aligned}
\]
and\[
\begin{aligned}
  &\frac12
  \norm{
    U_{k+1}+\delta(x_{k+1}-x_k)
  }^2=\frac12
  \norm{
     \gamma(x_k-x^\star)+r_k(x_{k+1}-x_k)
  }^2\\
  &=
  \frac{\gamma^2}{2}\norm{x_k-x^\star}^2
  +\gamma r_k
  \ip{x_k-x^\star}{x_{k+1}-x_k}
  +\frac{r_k^2}{2}
  \norm{x_{k+1}-x_k}^2.
\end{aligned}
\]
Combining these four identities yields
\begin{align}\label{eq:es_2}
  &\frac{r_k}{2}
  \Big[
    \gamma
    \bigl(
      \norm{\bar x_k-x^\star}^2
      -\norm{x_{k+1}-x^\star}^2
    \bigr)
    +(r_k-\gamma)
    \bigl(
      \norm{\bar x_k-x_k}^2
      -\norm{x_{k+1}-x_k}^2
    \bigr)
  \Big] \notag
  \\
  &\quad=
  \gamma(k-1)
  \ip{x_k-x^\star}{x_k-x_{k-1}}
  -
  \gamma r_k
  \ip{x_k-x^\star}{x_{k+1}-x_k}
 \notag
 \\
  &\qquad+
  \frac{(k-1)^2}{2}
  \norm{x_k-x_{k-1}}^2
  -
  \frac{r_k^2}{2}
  \norm{x_{k+1}-x_k}^2 \\
    &\quad=
  \frac12\norm{U_k}^2
  -
  \frac12
  \norm{
    U_{k+1}+\delta(x_{k+1}-x_k)
  }^2.\notag
\end{align}

It remains to express the last term in
\eqref{eq:es_2} in terms of the primal
energy. From \eqref{eq:aux}, we have
\begin{align*}
  &\frac12
  \norm{
    U_{k+1}+\delta(x_{k+1}-x_k)
  }^2=
  \frac12\norm{U_{k+1}}^2
  +
  \delta\ip{U_{k+1}}{x_{k+1}-x_k}
  +
  \frac{\delta^2}{2}
  \norm{x_{k+1}-x_k}^2
  \notag\\
  &=
  \frac12\norm{U_{k+1}}^2
  +
  \gamma\delta
  \ip{x_{k+1}-x^\star}{x_{k+1}-x_k}
  +
  \left(
    k\delta+\frac{\delta^2}{2}
  \right)
  \norm{x_{k+1}-x_k}^2\\
  &=
  \frac12\norm{U_{k+1}}^2
  +
  \frac{\gamma\delta}{2}
  \left(
    \norm{x_{k+1}-x^\star}^2
    -\norm{x_k-x^\star}^2
  \right)+
  \frac{\delta}{2}
  (2k+\gamma+\delta)
  \norm{x_{k+1}-x_k}^2.\nonumber
 \end{align*}
 Substituting this into
\eqref{eq:es_2} and using \eqref{eq_left_est}, we obtain
\begin{align} \label{eq:dis_pri}
  &\tau r_k^2\ell_{k+1}
  -\tau r_{k-1}^2\ell_k
  +\frac12
  \left(
    \norm{U_{k+1}}^2-\norm{U_k}^2
  \right)\notag\\
  &\quad+
  \frac{\gamma\delta}{2}
  \left(
    \norm{x_{k+1}-x^\star}^2
    -\norm{x_k-x^\star}^2
  \right)+
  \frac{\delta}{2}(2k+\gamma+\delta)
  \norm{x_{k+1}-x_k}^2
 \notag\\
  &\ \leq
  -\tau\bigl((\gamma-2)r_k+1\bigr)\ell_k
  -
  \tau\gamma r_k
  \ip{p_{k+1}-\lambda^\star}{\widehat g_{k+1}}\notag\\
  &\qquad+
  \tau r_k
  \ip{\epsilon_{k+1}}
    {U_{k+1}+\delta(x_{k+1}-x_k)}.
\end{align}

For the dual part, by the definitions of $\bar\lambda_k$,
$\widehat\lambda_{k+1}$, and $V_k$, together with
\eqref{eq:dis}\textnormal{(b)} and \eqref{eq:dis}\textnormal{(c)}, we have
\begin{align*}
  V_{k+1}-V_k+\delta(\lambda_{k+1}-\lambda_k)
  &=
  r_k(\lambda_{k+1}-\bar\lambda_k)=
  \frac{\tau\sigma r_k}{\beta}
  \bigl(p_{k+1}-\widehat\lambda_{k+1}\bigr),
  \\
  V_{k+1}+\delta(\lambda_{k+1}-\lambda_k)
  &=
  \gamma(\widehat\lambda_{k+1}-\lambda^\star).
  \end{align*}
Together with the identity
$
  \ip{a-b}{a}
  =
  \frac12\bigl(
    \norm{a}^2-\norm{b}^2+\norm{a-b}^2
  \bigr),
$
these relations imply
\begin{align}\label{eq:es3}
  &\frac1{2\sigma}
  \left(
    \norm{
      V_{k+1}+\delta(\lambda_{k+1}-\lambda_k)
    }^2
    -\norm{V_k}^2+\norm{
    V_{k+1}-V_k+\delta(\lambda_{k+1}-\lambda_k)
  }^2
  \right)
  \notag\\
 &\qquad=\frac1\sigma
  \ip{
    V_{k+1}-V_k+\delta(\lambda_{k+1}-\lambda_k)
  }{
    V_{k+1}+\delta(\lambda_{k+1}-\lambda_k)
  }
  \\
  &\qquad=
  \frac{\tau\gamma r_k}{\beta}
  \ip{\widehat\lambda_{k+1}-\lambda^\star}
    {p_{k+1}-\widehat\lambda_{k+1}}.\notag
\end{align}
Moreover, a computation analogous to that for $U_k$ gives the exact
identity
\begin{align*}
  \frac1{2\sigma}
  \norm{
    V_{k+1}+\delta(\lambda_{k+1}-\lambda_k)
  }^2
  ={}&
  \frac1{2\sigma}\norm{V_{k+1}}^2
  +
  \frac{\gamma\delta}{2\sigma}
  \left(
    \norm{\lambda_{k+1}-\lambda^\star}^2
    -\norm{\lambda_k-\lambda^\star}^2
  \right)
  \\
  &+
  \frac{\delta}{2\sigma}(2k+\gamma+\delta)
  \norm{\lambda_{k+1}-\lambda_k}^2.
\end{align*}
Substituting this into
\eqref{eq:es3} yields
\begin{align}\label{eq:dis_dual}
  &\frac1{2\sigma}
  \bigl(
    \norm{V_{k+1}}^2-\norm{V_k}^2
  \bigr)
  +
  \frac{\gamma\delta}{2\sigma}
  \left(
    \norm{\lambda_{k+1}-\lambda^\star}^2
    -\norm{\lambda_k-\lambda^\star}^2
  \right)
  +
  \frac{\delta}{2\sigma}(2k+\gamma+\delta)
  \norm{\lambda_{k+1}-\lambda_k}^2
  \notag\\
   &\qquad\leq
  \frac{\tau\gamma r_k}{\beta}
  \ip{\widehat\lambda_{k+1}-\lambda^\star}
    {p_{k+1}-\widehat\lambda_{k+1}}.
\end{align}

Adding \eqref{eq:dis_pri} and
\eqref{eq:dis_dual}, and using the definition
of ${\mathcal E_{k,z^\star}}$ in \eqref{eq:dis_energy}, we obtain
\begin{align}\label{eq:dis_enerEs}
  {\mathcal E_{k+1,z^\star}-\mathcal E_{k,z^\star}}
  \leq{}& -\tau\bigl((\gamma-2)r_k+1\bigr)\ell_k+
  \tau r_k
  \ip{\epsilon_{k+1}}
    {U_{k+1}+\delta(x_{k+1}-x_k)}
  \notag\\
  &-
  \frac{\delta}{2}(2k+\gamma+\delta)
  \left(
    \norm{x_{k+1}-x_k}^2
    +\frac1\sigma\norm{\lambda_{k+1}-\lambda_k}^2
  \right)
  \notag\\
  &
  +\tau\gamma r_k
  \Big[
    -\ip{\widehat g_{k+1}}
      {p_{k+1}-\lambda^\star}
    +
    \frac1\beta
    \ip{
      \widehat\lambda_{k+1}-\lambda^\star
    }{
      p_{k+1}-\widehat\lambda_{k+1}
    }
  \Big].
 \end{align}
It remains to estimate the last two coupling terms. By
\eqref{eq:dis}\textnormal{(e)},
$
  p_{k+1}
  =
  \proj_{\R_+^m}
  \bigl(
    \widehat\lambda_{k+1}
    +\beta\widehat g_{k+1}
  \bigr).
$
Since $\lambda^\star\in\R_+^m$, Lemma~\ref{lem:proj} gives
$
  \ip{
    \widehat\lambda_{k+1}
    +\beta\widehat g_{k+1}
    -p_{k+1}
  }{
    \lambda^\star-p_{k+1}
  }
  \leq0.
$
Furthermore,
\begin{align}
  &-\ip{\widehat g_{k+1}}
      {p_{k+1}-\lambda^\star}
  +
  \frac1\beta
  \ip{
    \widehat\lambda_{k+1}-\lambda^\star
  }{
    p_{k+1}-\widehat\lambda_{k+1}
  }
  \notag\\
  &\qquad=
  \frac1\beta
  \ip{
    \widehat\lambda_{k+1}
    +\beta\widehat g_{k+1}
    -p_{k+1}
  }{
    \lambda^\star-p_{k+1}
  }
 -
  \frac1\beta
  \norm{
    p_{k+1}-\widehat\lambda_{k+1}
  }^2
  \leq
  -\frac1\beta
  \norm{
    p_{k+1}-\widehat\lambda_{k+1}
  }^2.\nonumber
\end{align}
Substituting this estimate into \eqref{eq:dis_enerEs} yields
result.
\end{proof}

We next use this estimate to establish boundedness and the
$\mathcal O(k^{-2})$ residual estimates. 

\begin{theorem}\label{thm:dis_rates}
Suppose that $\sum_{k=2}^{+\infty}k\varepsilon_k<+\infty$, and let
$\{(x_k,\lambda_k)\}_{k\geq0}$ be generated by
Algorithm~\ref{alg:inexact}. Then the following statements hold:
\begin{itemize}
  \item[(i)] The sequence $\{(x_k,\lambda_k)\}$ is bounded, and
\[
    \|x_k-x_{k-1}\|+\|\lambda_k-\lambda_{k-1}\|
    =\mathcal O\left(\frac{1}{k}\right).
\]
  \item[(ii)] The constraint violation and objective residual satisfy
	\[ \|\pos{g(x_k)}\| =   \mathcal O\left(\frac{1}{k^{2}}\right), \qquad |f(x_k)-f^\star| = \mathcal O\left(\frac{1}{k^{2}}\right). \]
\end{itemize}
\end{theorem}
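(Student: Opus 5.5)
The proof mirrors the continuous-time argument of Theorems~\ref{thm:global} and~\ref{thm:rates}, with Lemma~\ref{lem:dis_est} playing the role of \eqref{eq:Edot}.

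\textbf{Step 1: boundedness of the energy.} Drop all nonpositive terms in \eqref{eq:dis_full_est}; this needs $\gamma\geq2$ and $\delta\geq0$. We are left with
\[
\mathcal E_{k+1,z^\star}\leq\mathcal E_{k,z^\star}+\tau r_k\varepsilon_{k+1}\,\norm{U_{k+1}+\delta(x_{k+1}-x_k)}.
\]
Since $U_{k+1}+\delta(x_{k+1}-x_k)=\gamma(x_k-x^\star)+r_k(x_{k+1}-x_k)$, we bound its norm by $\norm{U_{k+1}}+\delta\norm{x_{k+1}-x_k}$. Both pieces are controlled by $C\sqrt{\mathcal E_{k+1,z^\star}}$:
\begin{itemize}
\item[(a)] $\norm{U_{k+1}}\leq\sqrt{2\mathcal E_{k+1,z^\star}}$ directly from \eqref{eq:dis_energy};
\item[(b)] $k\norm{x_{k+1}-x_k}=\norm{U_{k+1}-\gamma(x_{k+1}-x^\star)}$, and both $\norm{U_{k+1}}$ and $\norm{x_{k+1}-x^\star}$ are bounded by multiples of $\sqrt{\mathcal E_{k+1,z^\star}}$ when $\delta>0$.
\end{itemize}
In the borderline case $\delta=0$ the extra $\delta$-term vanishes, so (b) is not needed there. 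This gives a recursion of the form
\[
a_{k+1}^2\leq a_k^2+c\,k\,\varepsilon_{k+1}\,a_{k+1},\qquad a_k:=\sqrt{\mathcal E_{k,z^\star}}.
\]
Apply a discrete Gronwall/Bihari lemma of the type in the appendix. Concretely, summing gives $a_{K}^2\leq a_1^2+c\sum_{k< K} k\varepsilon_{k+1}a_{k+1}$, and the standard argument yields
\[
\sup_k a_k\leq a_1+c\sum_k k\varepsilon_{k+1}<\infty,
\]
using $\sum k\varepsilon_k<\infty$. Consequently the series $\sum_k\tau r_k\langle\epsilon_{k+1},\cdot\rangle$ converges absolutely, and the energy is bounded.

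\textbf{Step 2: part (i).} The bound $\norm{\gamma(x_k-x^\star)+(k-1)(x_k-x_{k-1})}\leq C$ is the discrete analogue of the hypothesis of Lemma~\ref{le_bounde}. Write it as
\[
x_k-x^\star=\frac{k-1}{k-1+\gamma}(x_{k-1}-x^\star)+\frac{U_k}{k-1+\gamma}.
\]
Then induction gives $\norm{x_k-x^\star}\leq\max\{\norm{x_1-x^\star},C/\gamma\}$. The same recursion for $V_k$ bounds $\lambda_k$. Returning to $U_k$ and $V_k$ then gives $(k-1)\norm{x_k-x_{k-1}}=\bigO(1)$, and likewise for $\lambda$, which is (i).

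\textbf{Step 3: feasibility (the main obstacle).} The aim is a discrete Volterra identity. Use \eqref{eq:dis}(b),(c) and $p_{k+1}=\widehat\lambda_{k+1}+\beta\widehat g_{k+1}+\beta\eta_{k+1}$ with $\eta_{k+1}\geq0$, as in \eqref{eq:eta}. This yields
\[
\lambda_{k+1}-\bar\lambda_k=\sigma\tau\,\widehat g_{k+1}+\sigma\tau\,\eta_{k+1}.
\]
Multiply by $r_k$ and use $r_k(\lambda_{k+1}-\bar\lambda_k)=V_{k+1}-V_k+\delta(\lambda_{k+1}-\lambda_k)$. Also use $\gamma r_k\widehat g_{k+1}=r_k^2 g(x_{k+1})-r_k(r_k-\gamma)g(x_k)$ from \eqref{eq:dis}(d). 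Rewrite the latter via \eqref{eq_rk_eqa} as
\[
G_{k+1}-G_k+\bigl((\gamma-2)r_k+1\bigr)g(x_k),\qquad G_k:=r_{k-1}^2g(x_k).
\]
Summing from $1$ to $K$ gives
\[
G_{K+1}+\sum_{k\leq K}c_k\,G_k=B_K-M_K,\qquad c_k=\frac{(\gamma-2)r_k+1}{r_{k-1}^2}\geq0,
\]
with the following ingredients:
\begin{itemize}
\item $B_K=G_1+\frac{\gamma}{\sigma\tau}\bigl(V_{K+1}-V_1+\delta(\lambda_{K+1}-\lambda_1)\bigr)$, which is bounded by Step~2;
\item $M_K=\gamma\sum_k r_k\eta_{k+1}$, which is componentwise nondecreasing.
\end{itemize}
A discrete version of Lemma~\ref{lem:volterra}(i), presumably among the appendix results, then gives $\sup_K\norm{\pos{G_K}}\leq2\sup\norm{B_K}$, and hence $\norm{\pos{g(x_k)}}=\bigO(k^{-2})$. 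The delicate points are the exact index bookkeeping in this telescoping, and checking that the discrete Volterra lemma tolerates the weights $c_k\sim(\gamma-2)/k$, including $c_k>0$ when $\gamma=2$.

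\textbf{Step 4: objective residual.} For the upper bound, repeat Lemma~\ref{lem:dis_est} with reference multiplier $0$. This is valid since only $g(x^\star)\leq0$ and $\langle0,g(x^\star)\rangle=0$ were used. Write $\mathcal E^0_k=\tau r_{k-1}^2(f(x_k)-f^\star)+Q_k$, where $Q_k\geq0$ is bounded by Steps~1--2 applied to this energy. The one-step estimate then gives $\tau r_{k-1}^2(f(x_k)-f^\star)\leq\mathcal E^0_k\leq C$. Here we use that the error sum is finite. We can also skip the contraction term: in continuous time it was used only to avoid needing a bound on $\ell$, whereas here, if $f(x_k)-f^\star<0$, the bound is trivial.

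For the lower bound, the saddle inequality gives $f(x_k)-f^\star\geq-\norm{\lambda^\star}\norm{\pos{g(x_k)}}=-\bigO(k^{-2})$. Together these give $|f(x_k)-f^\star|=\bigO(k^{-2})$.
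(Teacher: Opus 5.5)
Steps 1--3 are sound, but Step 4 has a genuine gap. With reference multiplier $0$, the one-step estimate reads
\[
\mathcal E^0_{k+1}\le\mathcal E^0_k-\tau\bigl((\gamma-2)r_k+1\bigr)\bigl(f(x_k)-f^\star\bigr)+\tau r_k\ip{\epsilon_{k+1}}{U_{k+1}+\delta(x_{k+1}-x_k)}.
\]
The middle term is \emph{positive} at every iterate with $f(x_k)<f^\star$, so you cannot drop it. The fact that the target bound is trivial at such an index $k$ does not help, because $\mathcal E^0_K$ at a later index carries all of these earlier positive increments. The only a priori control on them is $f^\star-f(x_k)\le\norm{\lambda^\star}\norm{\pos{g(x_k)}}=\bigO(k^{-2})$. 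Each increment is therefore $\bigO((\gamma-2)/k)$, which is not summable when $\gamma>2$. Naive summation thus only gives $\mathcal E^0_K=\bigO(\log K)$, hence $f(x_K)-f^\star=\bigO(\log K/K^2)$; only for $\gamma=2$ would your argument close. Your remark about continuous time is also inaccurate: there the term $-(\gamma-2)t(f(x(t))-f^\star)$ is turned into a contraction for exactly this reason.

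The missing step is to substitute $\tau r_{k-1}^2(f(x_k)-f^\star)=\mathcal E^0_k-Q_k$. This rewrites the middle term as $-\theta_k(\mathcal E^0_k-Q_k)$ with $\theta_k=((\gamma-2)r_k+1)/r_{k-1}^2\in(0,1)$ by \eqref{eq_rk_eqa}, so that
\[
\mathcal E^0_{k+1}\le(1-\theta_k)\mathcal E^0_k+\theta_kQ_k+(\text{error})_k\le\max\Bigl\{\mathcal E^0_k,\sup_jQ_j\Bigr\}+(\text{error})_k .
\]
Iterating, with summable errors, gives $\sup_k\mathcal E^0_k<+\infty$. Equivalently, at an index with $f(x_k)<f^\star$,
\[
\mathcal E^0_k-\tau\bigl((\gamma-2)r_k+1\bigr)\bigl(f(x_k)-f^\star\bigr)=Q_k+\tau r_k(r_k-\gamma)\bigl(f(x_k)-f^\star\bigr)\le Q_k ,
\]
so the energy resets below $\sup_jQ_j$. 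This is precisely the contraction you proposed to skip.

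Your Step 3 differs from the paper in a useful way. You keep the relation in the explicit form $G_{k+1}-(1-c_k)G_k=\frac{\gamma}{\sigma\tau}\bigl(V_{k+1}-V_k+\delta(\lambda_{k+1}-\lambda_k)\bigr)-\gamma r_k\eta_{k+1}$, so your $B_K$ telescopes exactly. The paper instead divides by $1-c_k=r_k(r_k-\gamma)/r_{k-1}^2$ to reach the implicit form of Lemma~\ref{lem:dis_vol}, where $a_{k+1}=c_k/(1-c_k)$. That division produces the variable coefficients $b_k$ and forces the summation by parts \eqref{eq_bL1}. The price of your route is that Lemma~\ref{lem:dis_vol} does not apply verbatim, so you need its explicit-weight analogue. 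Its proof is the same with $\theta_k$ replaced by $1-c_k$, but the condition it actually requires is $0\le c_k\le1$ (not $c_k>0$). That condition holds here because $1-c_k=r_k(r_k-\gamma)/r_{k-1}^2>0$.
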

 \begin{proof}
(i) The definition of ${\mathcal E_{k,z^\star}}$ in \eqref{eq:dis_energy} gives
\[
  \tfrac12\norm{U_{k+1}}^2
  +
  \tfrac{\gamma\delta}{2}
  \norm{x_{k+1}-x^\star}^2
  \leq
  {\mathcal E_{k+1,z^\star}}.
\]
For $k\geq1$, since 
\[
  U_{k+1}+\delta(x_{k+1}-x_k)
  =
  \left(1+\frac{\delta}{k}\right)U_{k+1}
  -\frac{\delta\gamma}{k}(x_{k+1}-x^\star),
\] there exists a constant $C>0$, independent of $k$, such that
\[
  \norm{
    U_{k+1}+\delta(x_{k+1}-x_k)
  }^2
  \leq
  C{\mathcal E_{k+1,z^\star}}.
\]
Consequently, using
$\norm{\epsilon_{k+1}}\leq\varepsilon_{k+1}$ and Lemma~\ref{lem:dis_est}, we obtain
\[\norm{
    U_{k+1}+\delta(x_{k+1}-x_k)
  }^2 \le C{\mathcal E_{1,z^\star}}
  +
  \tau C
  \sum_{j=1}^{k}
  r_j\varepsilon_{j+1}\|U_{j+1}+\delta(x_{j+1}-x_j)\|.
\]
Since $r_k=k+\alpha-1$ and $\sum_{k=2}^{+\infty}k\varepsilon_k<+\infty$, we have
$\sum_{k=1}^{+\infty}r_k\varepsilon_{k+1}<+\infty$. Lemma~\ref{le_disc_per} therefore gives $\sup_{k\geq1} \norm{
    U_{k+1}+\delta(x_{k+1}-x_k)
  }<+\infty$. Using Lemma~\ref{lem:dis_est} once more,
\[
  \sup_{k\geq1} {\mathcal E_{k+1,z^\star}}
  \leq
  {\mathcal E_{1,z^\star}}
  +
  \tau \sup_{k\geq1} \norm{
    U_{k+1}+\delta(x_{k+1}-x_k)
  }
  \sum_{k=1}^{+\infty}
  r_k\varepsilon_{k+1}<+\infty.
\]
Together with \eqref{eq:dis_energy}, this gives
$\sup_{k\geq1}\|U_{k+1}\|<+\infty$ and
$\sup_{k\geq1}\|V_{k+1}\|<+\infty$. Applying
Lemma~\ref{lem:bounded_dis} to the primal and dual sequences yields the
boundedness of $\{(x_k,\lambda_k)\}$ and
\[\|x_k-x_{k-1}\|+\|\lambda_k-\lambda_{k-1}\|=\mathcal O(k^{-1}).\]
This proves~(i).

(ii) We first establish the feasibility rate. Analogously to the continuous-time argument in Theorem~\ref{thm:rates}, define, using \eqref{eq:dis}\textnormal{(e)},
\begin{equation}\label{eq_etadef}
  \eta_{k+1}
  :=
  \frac1\beta
  \bigl(
    p_{k+1}
    -\widehat\lambda_{k+1}
    -\beta\widehat g_{k+1}
  \bigr) =\frac1\beta
  \pos{
    -\widehat\lambda_{k+1}
    -\beta\widehat g_{k+1}
  }
  \in\R_+^m.
\end{equation}
Equation~\eqref{eq:dis}\textnormal{(b)} becomes
\begin{equation}\label{eq:feas_rel}
  \lambda_{k+1}-\bar\lambda_k
  =
  \tau\sigma
  \bigl(
    \widehat g_{k+1}+\eta_{k+1}
  \bigr).
\end{equation}

Define
\begin{equation}\label{eq_defG}
    G_k:=r_{k-1}^2g(x_k),
  \qquad
  \Lambda_k:=
  (\alpha-1)\lambda_k
  +(k-1)(\lambda_k-\lambda_{k-1}).
\end{equation}
By \eqref{eq:iner}, \eqref{eq:dis}, and \eqref{eq_rk}, we have
\[
  \Lambda_{k+1}-\Lambda_k=
  r_k(\lambda_{k+1}-\bar\lambda_k)\] and 
  \[
  \gamma\widehat g_{k+1}
  =
  r_k g(x_{k+1})
  -(r_k-\gamma)g(x_k).
\]
Combining these identities with \eqref{eq:feas_rel} gives
\[
  r_k^2g(x_{k+1})
  -r_k(r_k-\gamma)g(x_k)=\gamma r_k \widehat g_{k+1}
 =
  \frac{\gamma}{\tau\sigma}
  (\Lambda_{k+1}-\Lambda_k)
  -\gamma r_k\eta_{k+1}.
\]
Multiplying both sides by $
  \frac{r_{k-1}^2}{r_k(r_k-\gamma)}
$ and using \eqref{eq_rk_eqa} and \eqref{eq_defG}, we obtain
\begin{equation}\label{eq_diffG}
  G_{k+1}-G_k
  +
  \frac{(\gamma-2)r_k+1}
     {r_k(r_k-\gamma)}
  G_{k+1}
 =
  \frac{\gamma r_{k-1}^2}
     {\tau\sigma r_k(r_k-\gamma)}
  (\Lambda_{k+1}-\Lambda_k)
  -
  \frac{\gamma r_{k-1}^2}
     {r_k-\gamma}
  \eta_{k+1}.
\end{equation}
Define  $a_1:=0$ and 
\begin{equation}\label{eq_defab}
   \ 
  a_{k+1}:=
  \frac{(\gamma-2)r_k+1}{r_k(r_k-\gamma)},
  \qquad
  b_k:=
  \frac{\gamma r_{k-1}^2}{\tau\sigma r_k(r_k-\gamma)},\
  \forall k\geq1.
\end{equation}
By \eqref{eq_rk} and the parameter conditions, $a_k\geq0$ for all
$k\geq1$. Summing \eqref{eq_diffG}  from $k=1$ to $N-1$  gives, for every $N\geq1$,
\begin{align}\label{eq:dis1}
  G_N
  +\sum_{k=2}^{N}a_{k}G_{k}
  =
  G_1
  +\sum_{k=1}^{N-1}b_k(\Lambda_{k+1}-\Lambda_k)
-
  \sum_{k=1}^{N-1}
  \frac{\gamma r_{k-1}^2}{r_k-\gamma}\eta_{k+1}.
\end{align}
Moreover, $\{b_k\}$ is nonnegative and bounded, and
\begin{align}\label{eq_diffb}
  b_{k+1}-b_k
  &=
  \frac{\gamma}{\tau\sigma}
  \left[
    \frac{(k+\alpha-1)^2}
    {(k+\alpha)(k+\alpha-\gamma)}
    -
    \frac{(k+\alpha-2)^2}
    {(k+\alpha-1)(k+\alpha-1-\gamma)}
  \right]\nonumber \\
  &=-\frac{\gamma}{\tau\sigma}
  \frac{
    (\gamma-2)(k+\alpha-1)^2
    +\gamma(k+\alpha-1)-\gamma+1
  }{
    (k+\alpha-1)(k+\alpha-1-\gamma)
    (k+\alpha)(k+\alpha-\gamma)
  }.
\end{align}
Since $2\leq\gamma\leq\alpha-1$, there exists $C>0$ such that
$
  |b_{k+1}-b_k|\leq C/k^2
$
for every $k\geq1$. By part~(i), $\{\lambda_k\}$ is bounded and
$\|\lambda_k-\lambda_{k-1}\|=\mathcal O(k^{-1})$; hence
\eqref{eq_defG} implies that $\{\Lambda_k\}$ is bounded. For $N\geq2$,
summation by parts gives
\begin{align}\label{eq_bL1}
  \sum_{k=1}^{N-1}b_k(\Lambda_{k+1}-\Lambda_k)
  ={}&
  b_{N-1}\Lambda_N-b_1\Lambda_1
  +\sum_{k=2}^{N-1}(b_{k-1}-b_k)\Lambda_k.
\end{align}
The right-hand side is uniformly bounded because $\{b_k\}$ and
$\{\Lambda_k\}$ are bounded and
$\sum_{k=1}^{+\infty}|b_{k+1}-b_k|\le \sum_{k=1}^{+\infty} C/k^2 <+\infty$. Therefore, the sequence
\begin{align}\label{eq_bL2}
  B_N:=G_1+
  \sum_{k=1}^{N-1}b_k(\Lambda_{k+1}-\Lambda_k),
  \qquad N\geq1,
\end{align}
is bounded. On the other hand,
\[
  M_N:=
  \sum_{k=1}^{N-1}
  \frac{\gamma r_{k-1}^2}{r_k-\gamma}\eta_{k+1},
\]
satisfies $M_1=0$ and is componentwise nondecreasing, since
$r_k-\gamma>0$ and $\eta_{k+1}\in\R_+^m$. Then, from \eqref{eq:dis1} we have
\begin{align}\label{eq:disnew}
  G_N
  +\sum_{k=2}^{N}a_{k}G_{k}
  = B_N- M_N.
\end{align}
 Applying
Lemma~\ref{lem:dis_vol}(i) to \eqref{eq:disnew} therefore yields
$
  \sup_{k\geq1}\|\pos{G_k}\|<+\infty.
$
Consequently, by \eqref{eq_defG} and \eqref{eq_rk},
\begin{equation}\label{eq:dis_es4}
  \|\pos{g(x_k)}\|
  =\frac{\|\pos{G_k}\|}{r_{k-1}^2}
  =\mathcal O(k^{-2}).
\end{equation}

 We next estimate the objective residual. To this end, we specialize
the discrete energy $\mathcal E_{k,z^\star}$ by taking
$
\lambda^\star=0\in\mathbb R^m_+,
$
that is, $z^\star=(x^\star,0)$, and define
\begin{equation}\label{eq_disEK0}
  \mathcal E_{k,0}
  :=
  \mathcal E_{k,z^\star}\big|_{\lambda^\star=0}
  =
  \tau r_{k-1}^2
  \bigl(f(x_k)-f^\star\bigr)
  +
  \mathcal Q_k,
\end{equation}
where
\[
\mathcal Q_k
=
\frac12\norm{U_k}^2
+\frac{\gamma\delta}{2}\norm{x_k-x^\star}^2
+\frac1{2\sigma}
\norm{
  \gamma\lambda_k
  +(k-1)(\lambda_k-\lambda_{k-1})
}^2
+\frac{\gamma\delta}{2\sigma}\norm{\lambda_k}^2.
\]
This is the energy \eqref{eq:dis_energy} with the 
multiplier $\lambda^\star$ replaced by $0$. Repeating the calculation in the proof of
Lemma~\ref{lem:dis_est} with the reference multiplier set to
$0\in\R_+^m$ gives the estimate below. This calculation uses feasibility
of $x^\star$ and the projection inequality with reference multiplier $0$;
it does not require $(x^\star,0)$ to be a KKT pair or
$f(x_k)-f^\star$ to be nonnegative. Thus
\begin{align}\label{eq:dis_obj}
    {\mathcal E_{k+1,0}-\mathcal E_{k,0}}
    \leq{}&
    -\tau\bigl((\gamma-2)r_k+1\bigr)
    \bigl(f(x_k)-f^\star\bigr)
    +
    \tau r_k
    \ip{\epsilon_{k+1}}
       {U_{k+1}+\delta(x_{k+1}-x_k)}.
\end{align}
By part~(i),
$
  0\leq\mathcal Q_k\leq C_0
$
for some $C_0>0$. Set
$
  \theta_k=
  \frac{(\gamma-2)r_k+1}{r_{k-1}^2}.
$
By \eqref{eq_rk}, \eqref{eq_rk_eqa}, and $\gamma\geq2$, we have
$0<\theta_k<1$ for every $k\geq1$. By \eqref{eq_disEK0},
\[
  f(x_k)-f^\star
  =
  \frac{{\mathcal E_{k,0}}-\mathcal Q_k}
     {\tau r_{k-1}^2}.
\]
Substituting this identity into
\eqref{eq:dis_obj} gives
\begin{align}\label{eq:E0_rec}
  {\mathcal E_{k+1,0}}
  &\leq
  (1-\theta_k){\mathcal E_{k,0}}
  +\theta_k\mathcal Q_k
  +
  \tau r_k
  \ip{\epsilon_{k+1}}
    {U_{k+1}+\delta(x_{k+1}-x_k)}
  \notag\\
  &\leq
   \max\{{\mathcal E_{k,0}},C_0\}
  + \tau\sup\nolimits_{j\geq1} \norm{
    U_{j+1}+\delta(x_{j+1}-x_j)
  }\cdot r_k\varepsilon_{k+1}\\
  &\leq
  \max\{{\mathcal E_{1,0}},C_0\}
  +
  \tau\sup\nolimits_{j\geq1} \norm{
    U_{j+1}+\delta(x_{j+1}-x_j)
  }\cdot\sum_{j=1}^{k}r_j\varepsilon_{j+1}.
  \nonumber
\end{align}
From (i) and assumption,
$
    \sup_{j\geq1}
    \|U_{j+1}+\delta(x_{j+1}-x_j)\|<+\infty,
$
and $\sum_{k=1}^{+\infty}r_k\varepsilon_{k+1}<+\infty$.
Equation~\eqref{eq:E0_rec} therefore implies that
$
  {\mathcal E_{k,0}}
  \leq C_1
$
for some constant $C_1>0$. Hence, since $\mathcal Q_k\geq0$, \eqref{eq_disEK0} gives
\begin{equation}\label{eq:upp}
  f(x_k)-f^\star
  =
  \frac{{\mathcal E_{k,0}}-\mathcal Q_k}
     {\tau r_{k-1}^2}
  \leq
  \frac{C_1}{\tau r_{k-1}^2}.
\end{equation}

On the other hand, the saddle-point inequality \eqref{eq:saddle} gives
\[
  f(x_k)-f^\star
  +\ip{\lambda^\star}{g(x_k)}
  \geq0.
\]
Since $\lambda^\star\in\R_+^m$,
\[
\begin{aligned}
  f(x_k)-f^\star
  &\geq
  -\ip{\lambda^\star}{g(x_k)}
  \geq
  -\ip{\lambda^\star}{\pos{g(x_k)}}\geq
  -\norm{\lambda^\star}
   \norm{\pos{g(x_k)}}.
\end{aligned}
\]
Combining this estimate with
\eqref{eq:dis_es4} and \eqref{eq:upp} yields
$
  |f(x_k)-f^\star|
  =
  \mathcal O(k^{-2}).
$
This proves~(ii).

\end{proof}

\subsection{Convergence of the iterates and improved rates}

In this part, we  impose strict inequalities in order to turn the energy dissipation
into full-sequence convergence and little-$o$ rates.

Throughout this part, in addition to the assumptions of
Theorem~\ref{thm:dis_rates}, suppose that
\begin{equation}\label{eq:dis_strict_para}
  \alpha>3,\qquad 2<\gamma<\alpha-1.
\end{equation}
Thus $\delta=\alpha-\gamma-1>0$. We use the notation
\[
  z_k=(x_k,\lambda_k),\qquad
  \|z\|_\sigma^2=\|x\|^2+\frac1\sigma\|\lambda\|^2
  \quad\text{for }z=(x,\lambda),
\]
and \[d_{k+1}:=p_{k+1}-\widehat\lambda_{k+1}.\]
For $z^\star=(x^\star,\lambda^\star)\in\Omega$, we continue to use
the notation $\mathcal E_{k,z^\star}$ from \eqref{eq:dis_energy}, with
$\ell_k,U_k,V_k$ in \eqref{eq:aux} computed relative to this pair.

\begin{lemma}\label{lem:dis_cluster}
Suppose that the assumptions of Theorem~\ref{thm:dis_rates} and
\eqref{eq:dis_strict_para} hold. Then every cluster point of
$\{(x_k,\lambda_k)\}$ generated by Algorithm~\ref{alg:inexact}
belongs to $\Omega$.
\end{lemma}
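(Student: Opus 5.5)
Every cluster point of the iterates is a KKT pair. The plan is to mirror the proof of Lemma~\ref{lem:cluster} in discrete form. The first step is to obtain summability from Lemma~\ref{lem:dis_est}. Under \eqref{eq:dis_strict_para} we have $\gamma-2>0$ and $\delta>0$, and by Theorem~\ref{thm:dis_rates}(i) the perturbation term is bounded by $\tau C r_k\varepsilon_{k+1}$, which is summable. Summing \eqref{eq:dis_full_est} therefore gives
\[
\sum_k k\ell_k<+\infty,\qquad \sum_k k\|z_{k+1}-z_k\|_\sigma^2<+\infty,\qquad \sum_k k\|d_{k+1}\|^2<+\infty .
\]
Next take a cluster point $\bar z=(\bar x,\bar\lambda)$ with $z_{k_j}\to\bar z$. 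Fix $\rho>1$ and use the index windows $I_j=\{k_j,\dots,\lfloor\rho k_j\rfloor\}$, which play the role of $[t,\rho t]$ in continuous time.

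The second step is to show that $z$ stays close to $\bar z$ on each window and to pick good indices. By Cauchy--Schwarz,
\[
\sum_{k\in I_j}\|z_{k+1}-z_k\|\le \Bigl(\sum_{k\in I_j}\tfrac1k\Bigr)^{1/2}\Bigl(\sum_{k\ge k_j}k\|z_{k+1}-z_k\|^2\Bigr)^{1/2}\le \sqrt{\log\rho+o(1)}\cdot o(1)\to0 .
\]
Hence $\sup_{k\in I_j}\|z_k-\bar z\|\to0$. In the same way, $\sum_{k\in I_j}\frac1k\bigl(k^2\|z_{k+1}-z_k\|^2+k^2\|d_{k+1}\|^2\bigr)\to0$ while $\sum_{k\in I_j}1/k\to\log\rho$. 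So there exists $s_j\in I_j$ with
\[
s_j\|z_{s_j+1}-z_{s_j}\|+s_j\|d_{s_j+1}\|\to0 .
\]
At $s_j$ we then read off the limits. From \eqref{eq:dis}(c), $\widehat\lambda_{s_j+1}\to\bar\lambda$. From \eqref{eq:dis}(d) together with the mean value bound $\|g(x_{k+1})-g(x_k)\|\le L\|x_{k+1}-x_k\|$, valid since the iterates are bounded and $g$ is $C^1$, we get $\widehat g_{s_j+1}\to g(\bar x)$. Since $d_{s_j+1}\to0$, also $p_{s_j+1}\to\bar\lambda$. Passing to the limit in \eqref{eq:dis}(e) gives $\bar\lambda=\pos{\bar\lambda+\beta g(\bar x)}$, which is feasibility, $\bar\lambda\ge0$ and complementarity, exactly as in Lemma~\ref{lem:cluster}.

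The third step, stationarity, is the main obstacle, because $h$ is nonsmooth and $\partial h$ cannot simply be averaged pointwise. I would avoid averaging altogether and use the monotone (variational) form at the single index $s_j$. Write $k=s_j$. From \eqref{eq:res},
\[
\xi_{k+1}=\epsilon_{k+1}-\frac{x_{k+1}-\bar x_k}{\tau}-\nabla\phi(\bar x_k)-J_g(x_{k+1})^\top p_{k+1}\in\partial h(x_{k+1}).
\]
We have $\|x_{k+1}-\bar x_k\|\le\|x_{k+1}-x_k\|+\|x_k-x_{k-1}\|$, and both terms vanish along $s_j$: the first by the choice of $s_j$, the second because the summability forces $\|z_k-z_{k-1}\|\to0$ for the whole sequence. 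Also $\epsilon_{k+1}\to0$, since $k\varepsilon_k$ is summable. Hence
\[
\xi_{s_j+1}\to-\nabla\phi(\bar x)-J_g(\bar x)^\top\bar\lambda .
\]
The graph of $\partial h$ is closed, and this is the step where $h$ being closed enters: pass to the limit in $h(y)\ge h(x_{k+1})+\ip{\xi_{k+1}}{y-x_{k+1}}$ for every $y$, using lower semicontinuity of $h$. This yields $-\nabla\phi(\bar x)-J_g(\bar x)^\top\bar\lambda\in\partial h(\bar x)$, which is the required stationarity. Combined with the projection identity from the second step, this gives $\bar z\in\Omega$.

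In the discrete setting no averaging over the window is needed, because the implicit update provides a subgradient at each iterate. The delicate points are therefore twofold. One is making sure the window argument really produces an index $s_j$ at which both the primal increment and the dual residual $d$ are $o(1/s_j)$. The other is the closed-graph limit for $\partial h$, where I would also check that $x_{s_j+1}\to\bar x$ so that the limit is taken at the correct point.
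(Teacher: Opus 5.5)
Your proposal is correct and follows essentially the same route as the paper. Both arguments get summability from Lemma~\ref{lem:dis_est}, then pick an index $q_j$ in the multiplicative window with $o(1/q_j)$ increments and dual residual. Both then pass to the limit in the projection identity and obtain stationarity from the closed graph of $\partial h$ at that single index.

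The only cosmetic difference is how you handle $x_k-x_{k-1}$. You control it through its decay along the whole sequence, whereas the paper builds it into the index selection of Lemma~\ref{lem:multiplicative_window}(ii).
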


\begin{proof}
Fix $z^\star=(x^\star,\lambda^\star)\in\Omega$.
By the proof of Theorem~\ref{thm:dis_rates}(i), there exists $C>0$ such that
$\sup_{k\ge 1}\|U_{k+1}+\delta(x_{k+1}-x_k)\|\le C$. Hence the
inexactness assumption gives
\[  \sum_{k=1}^{+\infty}
  \tau r_k
  \left|
    \ip{\epsilon_{k+1}}
      {U_{k+1}+\delta(x_{k+1}-x_k)}
  \right|
  \leq
  \tau C\sum_{k=1}^{+\infty}r_k\varepsilon_{k+1}
  <+\infty.
\]
Since
$\gamma>2$, $\delta>0$, and $r_k\geq k$, summing \eqref{eq:dis_full_est} from $k=1,2,\cdots$ and using ${\mathcal E_{k,z^\star}}\geq0$, we obtain
\begin{equation}\label{eq_sumres}
  \sum_{k=1}^{+\infty}
  k\left(
    \ell_k+\|z_{k+1}-z_k\|_\sigma^2
    +\|d_{k+1}\|^2
  \right)<+\infty.
\end{equation}

Let $\bar z=(\bar x,\bar\lambda)$ be an arbitrary cluster
point of $\{z_k\}$, and choose $n_j\to+\infty$ such that
$z_{n_j}\to\bar z$. Fix $\rho>1$. By the preceding
summability and the equivalence of norms in finite dimensions,
Lemma~\ref{lem:multiplicative_window}(ii) gives
\[  \sup_{n_j\leq q\leq\lfloor\rho n_j\rfloor}
  \|z_q-\bar z\|\to0
\]
and there exists integers $q_j\in[n_j,\lfloor\rho n_j\rfloor]$ such that
\begin{equation}\label{eq:dis_ind}
  q_j\left(
    \|z_{q_j}-z_{q_j-1}\|
    +\|z_{q_j+1}-z_{q_j}\|
    +\|d_{q_j+1}\|
  \right)\to0.
\end{equation}
This, together with $z_{n_j}\to\bar z$, implies
\[
  z_{q_j}\to\bar z,\qquad
  z_{q_j+1}
  =z_{q_j}+(z_{q_j+1}-z_{q_j})\to\bar z.
\]
Since $\frac{r_{q_j}}{q_j}=\frac{q_j+\alpha-1}{q_j}\to1$ as $j\to+\infty$, \eqref{eq:dis}\textnormal{(c)}
and \eqref{eq:dis_ind} imply
\begin{equation}\label{eq_hatLto}
	  \widehat\lambda_{q_j+1}
  =
  \lambda_{q_j}
  +\frac{r_{q_j}}{\gamma}
   (\lambda_{q_j+1}-\lambda_{q_j})
  \to\bar\lambda,
\end{equation}
as $j\to+\infty$ By Theorem~\ref{thm:dis_rates}(i), $\{x_k\}$ is bounded.
Since $g$ is continuously differentiable, it is Lipschitz
continuous on a closed ball containing all the iterates.
Thus, by \eqref{eq:dis}\textnormal{(d)}, there exists
$C>0$, independent of $j$, such that
\[
  \|\widehat g_{q_j+1}-g(\bar x)\|
  \leq
  \|g(x_{q_j})-g(\bar x)\|
  +C r_{q_j}\|x_{q_j+1}-x_{q_j}\|.
\]
Together with
$x_{q_j}\to\bar x$, $r_{q_j}/q_j\to1$, and
\eqref{eq:dis_ind}, this estimate gives
\[\widehat g_{q_j+1}\to g(\bar x)\qquad\text{as }j\to+\infty.\]

Moreover, \eqref{eq:dis_ind} gives
$d_{q_j+1}\to0$, and hence, by \eqref{eq_hatLto},
\begin{equation}\label{eq_pklim}
	  p_{q_j+1}
  =\widehat\lambda_{q_j+1}+d_{q_j+1}
  \to\bar\lambda.
\end{equation}
Passing to the limit in \eqref{eq:dis}\textnormal{(e)}
along the subsequence $\{q_j\}$ as $j\to+\infty$ therefore yields
\begin{equation}\label{eq:dis_cluster_projection}
  \bar\lambda=\pos{\bar\lambda+\beta g(\bar x)}.
\end{equation}
This is equivalent to
\[
  g(\bar x)\leq0,\qquad
  \bar\lambda\geq0,\qquad
  \ip{\bar\lambda}{g(\bar x)}=0.
\]

It remains to verify stationarity.
By \eqref{eq:iner} and \eqref{eq:dis_ind},
\[
  x_{q_j+1}-\bar x_{q_j}
  =
  x_{q_j+1}-x_{q_j}
  -\frac{q_j-1}{r_{q_j}}(x_{q_j}-x_{q_j-1})
  \to0.
\]
Together with $x_{q_j+1}\to\bar x$, this gives
$\bar x_{q_j}\to\bar x$. Choose
$\xi_{q_j+1}\in\partial h(x_{q_j+1})$ as in
\eqref{eq:res}. Since
$\|\epsilon_{q_j+1}\|\leq\varepsilon_{q_j+1}\to0$,
the continuity of $\nabla\phi$ and $J_g$ together with \eqref{eq_pklim} gives
\begin{align*}
  \xi_{q_j+1}
  &=
  \epsilon_{q_j+1}
  -\frac{x_{q_j+1}-\bar x_{q_j}}{\tau}
  -\nabla\phi(\bar x_{q_j})
  -J_g(x_{q_j+1})^\top p_{q_j+1}\\
  &\to
  -\nabla\phi(\bar x)-J_g(\bar x)^\top\bar\lambda.
\end{align*}
The subdifferential of a proper closed convex function has a closed
graph in finite dimensions; see, e.g., \cite{Bauschke16}.
Since
$x_{q_j+1}\to\bar x$, it follows that
\[
  0\in
  \nabla\phi(\bar x)+\partial h(\bar x)
  +J_g(\bar x)^\top\bar\lambda.
\]
Together with \eqref{eq:dis_cluster_projection}, this proves
$\bar z\in\Omega$.
\end{proof}

\begin{theorem}\label{thm:dis_littleo}
Let $\{(x_k,\lambda_k)\}$ be generated by Algorithm~\ref{alg:inexact}.
Suppose that the assumptions of Theorem~\ref{thm:dis_rates} and
\eqref{eq:dis_strict_para} hold. Then the following statements hold.
\begin{itemize}
  \item[(i)] There exists
  $(x^\infty,\lambda^\infty)\in\Omega$ such that
  \[
    (x_k,\lambda_k)\to(x^\infty,\lambda^\infty)
    \qquad\text{as }k\to+\infty.
  \]
  \item[(ii)] The iterates satisfy
  \[
    \|x_k-x_{k-1}\|+\|\lambda_k-\lambda_{k-1}\|
    =o\left(\frac{1}{k}\right).
  \]
  \item[(iii)] The feasibility and objective residuals satisfy
\[ \|\pos{g(x_k)}\|=o\left(\frac{1}{k^{2}}\right),\qquad |f(x_k)-f^\star|=o\left(\frac{1}{k^{2}}\right). \]
\end{itemize}\end{theorem}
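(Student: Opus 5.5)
The plan mirrors the continuous-time proof of Theorem~\ref{thm:trajectory_littleo}, with the integral averages replaced by discrete weighted sums.

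\textbf{Step 1: the limit of $h_k$.} Fix $z^\star\in\Omega$ and set $h_k:=\frac12\|z_k-z^\star\|_\sigma^2$. Expanding the squares in \eqref{eq:dis_energy} should give a discrete analogue of \eqref{eq:energy_distance}:
\[
  \mathcal E_{k,z^\star}
  = R_k+\gamma(k-1)(h_k-h_{k-1})+\gamma(\alpha-1)h_k+\rho_k .
\]
Here $R_k:=\tau r_{k-1}^2\ell_k+\frac{(k-1)^2}{2}\|z_k-z_{k-1}\|_\sigma^2$. The remainder $\rho_k$ is a multiple of $(k-1)\|z_k-z_{k-1}\|_\sigma^2$, coming from $\ip{a}{a-b}=\frac12(\|a\|^2-\|b\|^2+\|a-b\|^2)$. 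By \eqref{eq_sumres}, both $\sum_k R_k/k$ and $\sum_k\rho_k/k$ are finite.

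Next, the corrected energy $\mathcal E_{k,z^\star}+\tau C\sum_{j\ge k}r_j\varepsilon_{j+1}$ is nonincreasing by Lemma~\ref{lem:dis_est}. Hence $\mathcal E_{k,z^\star}$ has a finite limit $\mathcal E_\infty$.

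The identity is a first-order linear recursion $(k+\alpha-2)h_k-(k-1)h_{k-1}=\gamma^{-1}(\mathcal E_{k,z^\star}-R_k-\rho_k)$. I would multiply it by a discrete integrating factor $\pi_k\sim k^{\alpha-2}$, namely $\pi_k=\prod_{j\le k}\frac{j+\alpha-2}{j}$ suitably normalized, and sum. Then I would use a Stolz--Ces\`aro argument: split the sum at a large index $K$, as in the derivation of \eqref{eq:Rlim}, and use $\alpha>3$ so that $(j/k)^{\alpha-1}\le1$. This should give $h_k\to\mathcal E_\infty/(\gamma(\alpha-1))$. This discrete Volterra/Stolz step, together with the exact bookkeeping of $\rho_k$, is the main technical obstacle. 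If the identity does not close exactly, I would absorb the discrepancies into summable error terms, whose weighted averages vanish for the same reason.

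\textbf{Step 2: convergence of the iterates.} Step~1 holds for every $z^\star\in\Omega$, and Lemma~\ref{lem:dis_cluster} places every cluster point in $\Omega$. The discrete Opial lemma (Lemma~\ref{lem:opial}, sequence version) in the norm $\|\cdot\|_\sigma$ then gives $z_k\to z^\infty\in\Omega$. This proves (i).

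\textbf{Step 3: little-$o$ rates for the gap and the steps.} Take $z^\star=z^\infty$, so $h_k\to0$. By Theorem~\ref{thm:dis_rates}(i), $k\|z_k-z_{k-1}\|$ is bounded. Together with $\|z_k-z^\infty\|\to0$, this gives $(k-1)(h_k-h_{k-1})\to0$ via $h_k-h_{k-1}=\ip{z_k-z^\infty}{z_k-z_{k-1}}_\sigma-\frac12\|z_k-z_{k-1}\|_\sigma^2$. It also gives $\rho_k=\mathcal O(1/k)\to0$.

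Since $h_k\to0$, Step~1 forces $\mathcal E_\infty=0$. The identity then yields $R_k\to0$. Both parts of $R_k$ are nonnegative, so $k\|z_k-z_{k-1}\|\to0$, which is (ii), and $\ell_k=o(k^{-2})$ with $\ell_k=\mathcal L(x_k,\lambda^\infty)-f^\star$.

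\textbf{Step 4: feasibility and objective rates.} I would revisit \eqref{eq:disnew}. By (ii) and the convergence of $\lambda_k$, $\Lambda_k\to(\alpha-1)\lambda^\infty$. Summation by parts \eqref{eq_bL1} with $\sum|b_{k+1}-b_k|<\infty$ and $b_k\to\gamma/(\tau\sigma)$ shows that $B_N$ converges. The weights satisfy $a_k\sim(\gamma-2)/k$ with $\gamma>2$, and $M_N$ is nondecreasing, so Lemma~\ref{lem:dis_vol}(ii) gives $\|[G_N]_+\|\to0$. This is $\|[g(x_k)]_+\|=o(k^{-2})$.

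For the objective, consider $i$ with $\lambda_i^\infty>0$. We have $\widehat\lambda_{k+1}\to\lambda^\infty$, using \eqref{eq:dis}(c) and $r_k\|\lambda_{k+1}-\lambda_k\|\to0$, and $\widehat g_{k+1}\to g(x^\infty)$. Hence $\widehat\lambda_{i,k+1}+\beta\widehat g_{i,k+1}\to\lambda_i^\infty>0$, so $\eta_{i,k+1}=0$ eventually and $M_{i,N}$ is eventually constant. Lemma~\ref{lem:dis_vol}(ii) applied componentwise then gives $G_{i,N}\to0$, and therefore $\ip{\lambda^\infty}{G_k}\to0$. Finally,
\[
  r_{k-1}^2(f(x_k)-f^\star)=r_{k-1}^2\ell_k-\ip{\lambda^\infty}{G_k}\to0,
\]
which gives $|f(x_k)-f^\star|=o(k^{-2})$ and completes (iii).
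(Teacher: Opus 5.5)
Your proposal is correct and follows the paper's proof essentially step by step. The paper likewise expands the energy as $R_k+\gamma(k-1)(h_k-h_{k-1})+\gamma(\alpha-1)h_k$; it absorbs your remainder $\rho_k=\frac{\gamma(k-1)}{2}\|z_k-z_{k-1}\|_\sigma^2$ into $R_k$ through the coefficient $\frac{(k-1)(k+\gamma-1)}{2}$, so the identity closes exactly. It then obtains the energy limit from the quasi-Fej\'er inequality and uses the integrating factor $P_k=\prod_{j=2}^k(1+\frac{\alpha-1}{j-1})$ with Stolz--Ces\`aro (Lemma~\ref{lem:weighted_distance_limit}), followed by the discrete Opial lemma (Lemma~\ref{lem:opial_dis}) and Lemma~\ref{lem:dis_vol}(ii), exactly as you do. One small correction: the bound $(j/k)^{\alpha-1}\le1$ in the weighted-average step holds for any $\alpha\ge1$, so the strict range is not needed there; it is really used to obtain the summability \eqref{eq_sumres} and $\sum_k a_k=+\infty$.
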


\begin{proof}
(i) For any $z^\star=(x^\star,\lambda^\star)\in\Omega$, set
\[
  h_k^{z^\star}
  :=
  \frac12\|z_k-z^\star\|_\sigma^2
\]	 
\begin{equation}\label{eq_defR}
	  R_k^{z^\star}
  :=
  \tau r_{k-1}^2\ell_k
  +
  \frac{(k-1)(k+\gamma-1)}2
  \|z_k-z_{k-1}\|_\sigma^2,
\end{equation}
where $\ell_k$ is computed with the same KKT pair. Expanding the two
squares in \eqref{eq:dis_energy}, and using
$\gamma+\delta=\alpha-1$, gives the exact identity
\begin{equation}\label{eq:energy_dis}
  {\mathcal E_{k,z^\star}}
  =
  R_k^{z^\star}
  +\gamma(k-1)
   \bigl(h_k^{z^\star}-h_{k-1}^{z^\star}\bigr)
  +\gamma(\alpha-1)h_k^{z^\star}.
\end{equation}
The first two summability estimates in \eqref{eq_sumres}
imply
\begin{equation}\label{eq:dis_R_sum}
  \sum_{k=2}^{+\infty}\frac{R_k^{z^\star}}{k}<+\infty.
\end{equation}
Rearranging \eqref{eq:energy_dis} gives
\begin{equation}\label{eq:diffhk}
  (k-1)
  \bigl(h_k^{z^\star}-h_{k-1}^{z^\star}\bigr)
  +(\alpha-1)h_k^{z^\star}
  =
  \frac1\gamma{\mathcal E_{k,z^\star}}
  -\frac1\gamma R_k^{z^\star}.
\end{equation}

By Theorem~\ref{thm:dis_rates}(i), there exists $C>0$ such that
\[
  \|U_{k+1}+\delta(x_{k+1}-x_k)\|\leq C
  \qquad\text{for all }k\geq1.
\]
Since all dissipative terms in \eqref{eq:dis_full_est}
are nonpositive, we obtain
\[
  {\mathcal E_{k+1,z^\star}-\mathcal E_{k,z^\star}}
  \leq
  \tau r_k
  \ip{\epsilon_{k+1}}
     {U_{k+1}+\delta(x_{k+1}-x_k)}
  \leq C\tau r_k\varepsilon_{k+1}.
\]
Since $\mathcal E_{k,z^\star}\geq0$ and the inexactness assumption gives
$\sum_{k=1}^{+\infty}r_k\varepsilon_{k+1}<+\infty$. Then it follows from \cite[Lemma 5.31]{Bauschke16} that $\mathcal E_{k,z^\star}$ has a finite limit. 
Together with
\eqref{eq:dis_R_sum}, Lemma~\ref{lem:weighted_distance_limit}, applied to \eqref{eq:diffhk}
with $a=\alpha-1$, $E_k={\mathcal E_{k,z^\star}}/\gamma$, and
$R_k=R_k^{z^\star}/\gamma$, shows that, for every $z^\star\in\Omega$,
\begin{equation}\label{eq:dis_distance_limit}
  \lim_{k\to+\infty}h_k^{z^\star}
  =
  \frac{
    \lim_{k\to+\infty}{\mathcal E_{k,z^\star}}
  }{\gamma(\alpha-1)}\quad \text{exists}.
\end{equation}
In particular, $\|z_k-z^\star\|_\sigma$ has a limit.

By Lemma~\ref{lem:dis_cluster}, every cluster point belongs to $\Omega$.
The discrete version of Opial's lemma (Lemma~\ref{lem:opial_dis}), applied
with the fixed norm $\|\cdot\|_\sigma$, therefore gives a pair
$z^\infty=(x^\infty,\lambda^\infty)\in\Omega$ such that
  \[
      z_k\to z^\infty
    \qquad\text{as }k\to+\infty.
  \]
This proves~(i).
   
(ii)  Take $z^\star=z^\infty$. Then $h_k^{z^\infty}\to0$, and 
equation~\eqref{eq:dis_distance_limit} with
$z^\star=z^\infty$ also gives
\begin{equation}\label{eq:dis_energy_zero}
  \lim_{k\to+\infty}{\mathcal E_{k,z^\infty}}=0.
\end{equation}
Directly from the definition of
$h_k^{z^\infty}$,
\begin{align*}
 &(k-1)
 \bigl(h_k^{z^\infty}-h_{k-1}^{z^\infty}\bigr)=
 (k-1)\ip{x_k-x^\infty}{x_k-x_{k-1}}
 +\frac{k-1}{\sigma}
  \ip{\lambda_k-\lambda^\infty}{\lambda_k-\lambda_{k-1}}\\
 &\qquad
 -\frac{k-1}{2}
 \left(
   \|x_k-x_{k-1}\|^2
   +\frac1\sigma\|\lambda_k-\lambda_{k-1}\|^2
 \right).
\end{align*}
Together with Theorem~\ref{thm:dis_rates}(i) and $z_k\to z^\infty$, this proves
\[
  (k-1)
  \bigl(h_k^{z^\infty}-h_{k-1}^{z^\infty}\bigr)
  \to0.
\]
Using this limit, $h_k^{z^\infty}\to0$, and
\eqref{eq:dis_energy_zero} and \eqref{eq:energy_di}, we obtain
\[
 \lim_{k\to+\infty} R_k^{z^\infty}=0.
\]
It follows from \eqref{eq_defR} and the parameter settings that
\begin{equation}\label{eq:dis_gapo}
  \mathcal L(x_k,\lambda^\infty)-f^\star=o(k^{-2}),
  \qquad
  \|x_k-x_{k-1}\|+\|\lambda_k-\lambda_{k-1}\|
  =o(k^{-1}).
\end{equation}
This proves (ii).

(iii) Use the quantities $G_k,\Lambda_k,a_k,b_k,B_k,M_k$, and $\eta_k$ introduced
in the proof of Theorem~\ref{thm:dis_rates}(ii). It follows from \eqref{eq_defG}, (i) and (ii) that
\begin{equation}\label{eq_Llim}
  \Lambda_k
  =
  (\alpha-1)\lambda_k
  +(k-1)(\lambda_k-\lambda_{k-1})
  \to(\alpha-1)\lambda^\infty.
\end{equation}
Moreover, \eqref{eq_defab} and \eqref{eq_diffb} give
\begin{equation}\label{eq_blim}
 \lim_{k\to+\infty} b_k = \frac{\gamma}{\tau\sigma},
  \qquad
  \sum_{k=1}^{+\infty}|b_{k+1}-b_k|<+\infty.
\end{equation}
By \eqref{eq_bL1} and \eqref{eq_bL2}, for $k\geq2$,
\[
  B_k
  =
  G_1+b_{k-1}\Lambda_k-b_1\Lambda_1
  +\sum_{j=2}^{k-1}(b_{j-1}-b_j)\Lambda_j.
\]
It follows from \eqref{eq_Llim} and \eqref{eq_blim} that
$\{\Lambda_k\}$ is bounded. Hence
\[
  \sum_{j=2}^{+\infty}
  \|(b_{j-1}-b_j)\Lambda_j\|
  \leq
  \left(\sup_{j\geq1}\|\Lambda_j\|\right)
  \sum_{j=2}^{+\infty}|b_{j-1}-b_j|
  <+\infty.
\]
 Moreover,
\[
  \lim_{k\to+\infty}b_{k-1}\Lambda_k
  =
  \frac{\gamma(\alpha-1)}{\tau\sigma}\lambda^\infty.
\]
Since $G_1$ and $b_1\Lambda_1$ are fixed, it follows
that $B_k$ has a finite limit. In addition,
\[
  a_{k+1}
  =
  \frac{(\gamma-2)r_k+1}{r_k(r_k-\gamma)}
  \sim\frac{\gamma-2}{k},
\]
so $\sum_{k=2}^{+\infty}a_k=+\infty$. Recall from the proof of Theorem~\ref{thm:dis_rates}(ii) that
\[
  G_k+\sum_{j=2}^k a_jG_j=B_k-M_k
\]
with $M_k
  :=
  \sum_{j=1}^{k-1}
  \frac{\gamma r_{j-1}^2}{r_j-\gamma}\eta_{j+1}.$
As verified in that proof, $a_k\geq0$, $M_1=0$, and
$\{M_k\}$ is componentwise nondecreasing.
Together with the finite limit of $\{B_k\}$ and
$\sum_{k=2}^{+\infty}a_k=+\infty$ established above,
these properties allow us to apply
Lemma~\ref{lem:dis_vol}(ii), which gives
\[
  \lim_{k\to+\infty}\|\pos{G_k}\|=0.
\]
Since $G_k=r_{k-1}^2g(x_k)$ and the positive-part mapping is positively
homogeneous,
\begin{equation}\label{eq:diso}
  \|\pos{g(x_k)}\|=o\left(\frac{1}{k^{2}}\right).
\end{equation}

Finally, we prove the objective residual estimate.
By (i), (ii),  $r_k=k+\alpha-1$ and
\eqref{eq:dis}\textnormal{(c)}-\textnormal{(d)} give
\[
  \widehat\lambda_{k+1}
  =\lambda_k+\frac{r_k}{\gamma}
  (\lambda_{k+1}-\lambda_k)
  \to\lambda^\infty,
\]
and, since $g$ is Lipschitz continuous on a closed ball
containing the iterates, for some $C>0$,
\[
  \|\widehat g_{k+1}-g(x^\infty)\|
  \leq
  \|g(x_k)-g(x^\infty)\|
  +Cr_k\|x_{k+1}-x_k\|
  \to0.
\]

If $\lambda_i^\infty>0$, complementarity gives
$g_i(x^\infty)=0$, and hence
\[
  \widehat\lambda_{k+1,i}
  +\beta\widehat g_{k+1,i}
  \to\lambda_i^\infty>0.
\]
Thus $\eta_{k+1,i}=0$ for all sufficiently large $k$, where $\eta_k$ is defined in \eqref{eq_etadef}. Consequently,
there exists an integer $K_i$ such that
$M_{k,i}=M_{K_i,i}$ for every $k\geq K_i$. The last conclusion of
Lemma~\ref{lem:dis_vol}(ii) gives
\[
  G_{k,i}\to0
  \qquad\text{whenever }\lambda_i^\infty>0.
\]
Therefore,
$
  \ip{\lambda^\infty}{G_k}\to0
$ as $k\to+\infty$.
Finally, by \eqref{eq:dis_gapo},
\begin{align*}
  r_{k-1}^2\bigl(f(x_k)-f^\star\bigr)
  &=
  r_{k-1}^2
  \bigl(\mathcal L(x_k,\lambda^\infty)-f^\star\bigr)
  -\ip{\lambda^\infty}{G_k}\to0.
\end{align*}
This proves $|f(x_k)-f^\star|=o(k^{-2})$ and completes the proof.
\end{proof}

\begin{remark} 
Theorem~\ref{thm:dis_rates} establishes pointwise
$\mathcal O(k^{-2})$ rates for both the objective residual and nonlinear
feasibility without requiring strong convexity. This is the same polynomial
scale as the strongest accelerated guarantees available for nonlinear
functional constraints under the additional structures discussed in the
Introduction \cite{ZhuSiopt,LinNeurips,DengInformsJoc}. The distinction is
that Theorem~\ref{thm:dis_littleo} proves, in the strict regime
$\alpha>3$ and $2<\gamma<\alpha-1$, that the corresponding scaled
residuals actually vanish. It also proves convergence of the entire
primal-dual sequence and $o(k^{-1})$ successive differences; neither the
primal solution nor the optimal multiplier must be unique. Algorithm
\ref{alg:inexact} additionally permits primal inexactness under the weighted
summability condition of Theorem~\ref{thm:dis_rates}. 
When $g\equiv0$, the primal update is independent of
the multiplier variables. With exact primal solves,
it reduces to the FISTA-type iteration \eqref{eq:fista}, and Theorem~\ref{thm:dis_rates}(ii) recovers the classical
$\mathcal O(k^{-2})$ objective rate
\cite{AttouchMathProg,ChambolleJota,SuJmlr}.
\end{remark}

\section{Conclusions}\label{sec:conclusion}
In this paper, we developed a Nesterov-type primal-dual multiplier framework for
convex optimization with nonlinear inequality constraints. For the proposed
continuous-time dynamics, we established the accelerated
$\mathcal O(t^{-2})$ rates for both the objective residual and nonlinear
feasibility. In the noncritical regime $\alpha>3$ and
$2<\gamma<\alpha-1$, we additionally proved convergence of the entire
primal-dual trajectory to a KKT pair and improved both residual estimates
to $o(t^{-2})$, together with an $o(t^{-1})$ velocity estimate. We further
developed an inexact accelerated primal-dual algorithm through a compatible
discretization. Under a weighted summability
condition on the inexactness sequence, the resulting iterates achieve the
corresponding $\mathcal O(k^{-2})$ rates for both quantities. In the same
noncritical regime, we further proved convergence of the complete discrete
primal-dual sequence to a KKT pair, together with $o(k^{-2})$ objective and feasibility residuals. In this sense, the
continuous and discrete results are two realizations of the same multiplier-based
acceleration mechanism. The present discrete complexity analysis counts the
outer iterations and does not include the computational cost of
solving the nonlinear primal subproblems. A natural direction for future work is to specify an inner solver, connect its stopping criterion to the inexactness sequence, and combine the resulting inner complexity with the outer analysis to obtain an overall first-order oracle complexity. The iteration complexity techniques developed for inexact augmented Lagrangian methods in \cite{XuMathProg,LuSiopt} may be useful for this purpose.

\appendix

\section{Technical Lemmas}
In  appendix, we collect several auxiliary lemmas used in the convergence analysis of the continuous-time dynamics and discrete algorithms.

\begin{lemma}\label{lem:proj}
Let $C\subseteq\R^m$ be a nonempty closed convex set. For every
$z,y\in\R^m$,
\[
  y=\proj_C(z)
  \quad\Longleftrightarrow\quad
  y\in C,\qquad
  \ip{z-y}{v-y}\leq0
  \quad\forall\,v\in C.
\]
Moreover, for every $\theta>0$,
\[
  y=\proj_C
   \bigl(\theta z+(1-\theta)y\bigr)
  \quad\Longleftrightarrow\quad
  y=\proj_C(z).
\]
\end{lemma}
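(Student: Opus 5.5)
The first equivalence is the variational characterization of the Euclidean projection, and I would derive it from the optimality condition of the strongly convex problem $\min_{v\in C}\frac12\|v-z\|^2$. For the forward direction, let $y=\proj_C(z)$ and take any $v\in C$. For $s\in(0,1]$ the point $y+s(v-y)$ lies in $C$ by convexity, so
\[
  \|y+s(v-y)-z\|^2\geq\|y-z\|^2 .
\]
Expanding gives $2s\ip{y-z}{v-y}+s^2\|v-y\|^2\geq0$. Dividing by $s$ and letting $s\downarrow0$ yields $\ip{z-y}{v-y}\leq0$.

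For the converse, assume $y\in C$ and the inequality holds. For any $v\in C$,
\[
  \|v-z\|^2=\|v-y\|^2+\|y-z\|^2-2\ip{z-y}{v-y}\geq\|y-z\|^2 ,
\]
so $y$ minimizes the distance to $z$ over $C$. The minimizer is unique by strict convexity of the squared norm, hence $y=\proj_C(z)$.

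For the second equivalence I would apply the first part to the point $w:=\theta z+(1-\theta)y$. The key identity is
\[
  w-y=\theta(z-y).
\]
Therefore, for each $v\in C$, we have $\ip{w-y}{v-y}=\theta\ip{z-y}{v-y}$. Since $\theta>0$, the condition $\ip{w-y}{v-y}\leq0$ for all $v\in C$ holds if and only if $\ip{z-y}{v-y}\leq0$ for all $v\in C$. The membership requirement $y\in C$ is common to both sides. By the first part, $y=\proj_C(w)$ is equivalent to $y=\proj_C(z)$.

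There is no real obstacle here. The only point needing care is that $y$ appears on both sides of the second statement, so the claim is really a statement about a fixed-point characterization. The first part handles this directly because its characterization is expressed entirely in terms of the candidate point $y$, and it does not require $\theta\leq1$. The main step is the limiting argument in the forward direction of the first part.
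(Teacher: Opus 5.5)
Your proof is correct and follows the same route as the paper. For the first equivalence the paper simply cites the standard variational characterization of projections (Bauschke--Combettes, Theorem~3.16), whereas you derive it directly, which is a self-contained version of the same fact. For the second equivalence you argue exactly as the paper does: apply the characterization to $\theta z+(1-\theta)y$ and use $\theta z+(1-\theta)y-y=\theta(z-y)$ with $\theta>0$.
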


\begin{proof}
The first equivalence is the standard variational characterization
of metric projections; see
\cite[Theorem~3.16]{Bauschke16}. Since
$
  \theta z+(1-\theta)y-y=\theta(z-y)
$
and $\theta>0$, applying this characterization proves the second equivalence.
\end{proof}

The following lemma is a version of
\cite[Lemma~1]{HeHeGuo} on $[t_0,T)$. Although the original result is
stated on $[t_0,+\infty)$, the same argument applies when
$T\in(t_0,+\infty]$.
\begin{lemma}\label{le_bounde}
Let $T\in(t_0,+\infty]$. Assume that
$x:[t_0,T)\to\R^n$ and
$a:[t_0,T)\to(0,+\infty)$ are continuously differentiable,
$x^\star\in\R^n$, and let $C>0$. If
\[
  \|x(t)-x^\star+a(t)\dot x(t)\|^2\leq C,\qquad t\in[t_0,T),
\]
then $x(t)$ is bounded on $[t_0,T)$ and
$
  \sup_{t\in[t_0,T)}a(t)\|\dot x(t)\|<+\infty.
$
\end{lemma}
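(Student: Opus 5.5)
Set $u(t):=x(t)-x^\star$, so that $\|u(t)+a(t)\dot u(t)\|\le\sqrt C$ on $[t_0,T)$. The natural approach is to read this as a first-order linear differential equation for $u$ with a bounded forcing term, integrate it with an integrating factor, and obtain boundedness of $u$ from the explicit formula. The velocity bound then follows from the triangle inequality.

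Define $w(t):=u(t)+a(t)\dot u(t)$, so $\|w(t)\|\le\sqrt C$, and $\dot u=(w-u)/a$. Let $\Lambda(t):=\int_{t_0}^t\frac{ds}{a(s)}$, which is well defined since $a$ is continuous and positive. Then
\[
\frac{d}{dt}\bigl(e^{\Lambda(t)}u(t)\bigr)=e^{\Lambda(t)}\frac{w(t)}{a(t)}=\frac{d}{dt}\bigl(e^{\Lambda(t)}\bigr)\,w(t).
\]
Integrating over $[t_0,t]$ gives
\[
e^{\Lambda(t)}u(t)=u(t_0)+\int_{t_0}^t\frac{d}{ds}\bigl(e^{\Lambda(s)}\bigr)w(s)\,ds,
\]
and therefore
\[
\|u(t)\|\le e^{-\Lambda(t)}\|u(t_0)\|+\sqrt C\,e^{-\Lambda(t)}\bigl(e^{\Lambda(t)}-1\bigr)\le\|u(t_0)\|+\sqrt C.
\]
The point that needs care is that the integrand weight $\frac{d}{ds}e^{\Lambda(s)}$ is nonnegative. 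This holds because $a>0$, so the norm of the integral is bounded by $\sqrt C$ times the total increment of $e^\Lambda$. The estimate is also uniform in $T$: it does not require $\Lambda(t)\to\infty$, and it remains valid even when $T<\infty$.

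The bound on $u$ gives boundedness of $x(t)=x^\star+u(t)$. For the velocity, $a(t)\|\dot x(t)\|=\|w(t)-u(t)\|\le\|u(t_0)\|+2\sqrt C$, so $\sup_{t\in[t_0,T)}a(t)\|\dot x(t)\|<+\infty$.

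The main obstacle is only to avoid a Gronwall-type argument that would introduce $T$-dependent constants. The integrating-factor representation above avoids this, because the weights form a convex combination of the initial value and averages of $w$. For vector-valued $w$, the bound on $\bigl\|\int \frac{d}{ds}\bigl(e^{\Lambda(s)}\bigr)w(s)\,ds\bigr\|$ follows from the standard inequality $\|\int F\|\le\int\|F\|$.
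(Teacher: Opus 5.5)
Your proof is correct and matches the standard integrating-factor argument behind \cite[Lemma~1]{HeHeGuo}, which the paper cites rather than reproving: with $\Lambda(t)=\int_{t_0}^t a(s)^{-1}\,ds$, the identity $\frac{d}{dt}\bigl(e^{\Lambda(t)}(x(t)-x^\star)\bigr)=\frac{d}{dt}\bigl(e^{\Lambda(t)}\bigr)\bigl(x(t)-x^\star+a(t)\dot x(t)\bigr)$ writes $x(t)-x^\star$ as a convex combination of $x(t_0)-x^\star$ and a weighted average of vectors of norm at most $\sqrt C$. As you observe, nothing in this uses $T=+\infty$, and that is exactly the paper's justification for stating the lemma on $[t_0,T)$.
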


\begin{lemma}\label{lem:bounded_dis}
Let $\{x_k\}_{k\geq0}\subset\R^n$, $x^\star\in\R^n$, and $\gamma>0$. Suppose that
\[
  \sup_{k\geq0}
  \norm{
    \gamma(x_{k+1}-x^\star)
    +k(x_{k+1}-x_k)
  }
  <+\infty.
\]
Then $\{x_k\}$ is bounded and
$
  \norm{x_{k+1}-x_k}
  =
  \mathcal O(k^{-1}).
$
\end{lemma}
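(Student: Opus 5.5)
The plan is to mirror the continuous argument behind Lemma~\ref{le_bounde}, using a discrete variation-of-constants formula. Set $w_{k+1}:=\gamma(x_{k+1}-x^\star)+k(x_{k+1}-x_k)$ and $C:=\sup_k\|w_{k+1}\|<+\infty$. Writing $y_k:=x_k-x^\star$, the definition of $w_{k+1}$ becomes the first-order linear recursion
\[
  (k+\gamma)\,y_{k+1}=k\,y_k+w_{k+1},\qquad k\geq0 .
\]
At $k=0$ this gives $y_1=w_1/\gamma$, which is bounded. I will then introduce the integrating factor $\pi_k:=\prod_{j=1}^{k-1}\frac{j+\gamma}{j}$, using the paper's convention for empty products. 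With this choice, $\pi_{k+1}(k+\gamma)^{-1}\cdot k=\pi_k$ for $k\geq1$, and the recursion multiplied by $\pi_{k+1}$ telescopes:
\[
  \pi_{k+1}y_{k+1}=\pi_k y_k+\frac{\pi_{k+1}}{k+\gamma}w_{k+1},
  \qquad\text{so}\qquad
  \pi_{N}y_{N}=\pi_1y_1+\sum_{k=1}^{N-1}\frac{\pi_{k+1}}{k+\gamma}w_{k+1}.
\]

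The key step is comparing the weighted sum against $\pi_N$. For $k\geq1$ we have $\frac{\pi_{k+1}}{k+\gamma}=\frac{\pi_k}{k}$, and also $\pi_{k+1}-\pi_k=\pi_k\frac{\gamma}{k}$. Hence
\[
  \sum_{k=1}^{N-1}\frac{\pi_k}{k}=\frac1\gamma(\pi_N-\pi_1),
\]
which yields
\[
  \|y_N\|\leq\frac{\pi_1\|y_1\|}{\pi_N}+\frac{C}{\gamma}\Bigl(1-\frac{\pi_1}{\pi_N}\Bigr)\leq\|y_1\|+\frac{C}{\gamma},
\]
since $\pi_N\geq\pi_1=1$ is nondecreasing. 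This proves that $\{x_k\}$ is bounded. The algebra is exact, so no asymptotics of the products (such as $\pi_k\sim k^\gamma$) are needed. This identity is the only delicate point, and it is the discrete analogue of $\int_{t_0}^t s^{\gamma-1}\,ds$ in the continuous proof.

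Finally, for $k\geq1$ the definition of $w_{k+1}$ gives
\[
  k\|x_{k+1}-x_k\|\leq\|w_{k+1}\|+\gamma\|y_{k+1}\|\leq C+\gamma\Bigl(\|y_1\|+\frac C\gamma\Bigr),
\]
so $\|x_{k+1}-x_k\|=\mathcal O(k^{-1})$. The index $k=0$ is a single finite term and does not affect the asymptotic statement.
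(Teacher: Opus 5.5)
Your proof is correct and rests on the same rearrangement $(k+\gamma)(x_{k+1}-x^\star)=k(x_k-x^\star)+w_{k+1}$ that the paper uses.

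The only difference is in how the bound is propagated. The paper takes norms and shows $\|x_k-x^\star\|\le\max\{\|x_0-x^\star\|,M/\gamma\}$ (with $M$ the assumed supremum) by a one-line induction, since each step makes $x_{k+1}-x^\star$ a convex combination of $x_k-x^\star$ and $w_{k+1}/\gamma$. You instead unroll that same convex-combination structure explicitly through the integrating factor $\pi_k$. Either way one gets a uniform bound, and then the $\mathcal O(k^{-1})$ step estimate follows from $k\|x_{k+1}-x_k\|\le\|w_{k+1}\|+\gamma\|x_{k+1}-x^\star\|$.
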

\begin{proof}
By assumption, there exists $M>0$ such that
\[
  \norm{
    \gamma(x_{k+1}-x^\star)
    +k(x_{k+1}-x_k)
  }
  \leq M
\]
for all $k\geq0$. Rearranging gives
\[
  (k+\gamma)(x_{k+1}-x^\star)
  =
  k(x_k-x^\star)
  +
  \gamma(x_{k+1}-x^\star)
  +k(x_{k+1}-x_k).
\]
Hence
\[
  \norm{x_{k+1}-x^\star}
  \leq
  \frac{k}{k+\gamma}\norm{x_k-x^\star}
  +\frac{M}{k+\gamma}.
\]
Let
$
  C:=\max\{\norm{x_0-x^\star},M/\gamma\}.
$ We show by induction that
$
  \norm{x_k-x^\star}\leq C
$
for all $k\geq0$. The claim is clear for $k=0$. If
$
  \norm{x_k-x^\star}\leq C,
$
then, since $M\leq\gamma C$,
\[
  \norm{x_{k+1}-x^\star}
  \leq
  \frac{kC+M}{k+\gamma}
  \leq
  \frac{kC+\gamma C}{k+\gamma}
  =C.
\]
Therefore, $\{x_k\}$ is bounded. Finally, for $k\geq1$,
\[
  k\norm{x_{k+1}-x_k}
  \leq
  M+\gamma\norm{x_{k+1}-x^\star}
  \leq
  M+\gamma C.
\]
Therefore,
$
  \norm{x_{k+1}-x_k}
  =
  \mathcal O(k^{-1}),
$
which completes the proof.
\end{proof}

The following finite-dimensional continuous and discrete variants of
Opial's lemma follow from \cite{Bauschke16}.

\begin{lemma}\label{lem:opial}
Let $S\subset\R^d$ be nonempty, let
$u:[t_0,+\infty)\to\R^d$, and let $\|\cdot\|$ be any
fixed norm on $\R^d$. Suppose that:
\begin{itemize}
  \item[(i)] For every $v\in S$,
  $\lim_{t\to+\infty}\|u(t)-v\|$ exists and is finite.
  \item[(ii)] Every cluster point of $u(t)$ as
  $t\to+\infty$ belongs to $S$.
\end{itemize}
Then there exists $u^\infty\in S$ such that
$u(t)\to u^\infty$ as $t\to+\infty$.
\end{lemma}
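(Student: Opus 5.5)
The plan is to prove this continuous Opial lemma directly, by the standard argument: first show $u$ is bounded, then show it has at most one cluster point. Since $\R^d$ is finite-dimensional, these two facts give convergence.

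\emph{Boundedness.} Fix any $v\in S$, which is possible because $S\neq\varnothing$. By (i), $\|u(t)-v\|$ converges to a finite limit as $t\to+\infty$. Hence $u$ is bounded on $[T,+\infty)$ for some $T\geq t_0$. We do not need continuity of $u$ on $[t_0,T]$: cluster points concern only $t\to+\infty$, so boundedness of the tail suffices. By Bolzano--Weierstrass, $u$ therefore has at least one cluster point $u^\infty$, and by (ii), $u^\infty\in S$.

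\emph{Uniqueness of the cluster point.} Suppose $a$ and $b$ are two cluster points, with $u(s_k)\to a$ and $u(t_k)\to b$. By (ii), both lie in $S$, so by (i) the limits $\ell_a:=\lim\|u(t)-a\|$ and $\ell_b:=\lim\|u(t)-b\|$ exist. Evaluating $\ell_a$ along $s_k$ gives $\ell_a=0$, and evaluating it along $t_k$ gives $\ell_a=\|b-a\|$. Hence $a=b$.

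This step works for an arbitrary norm and needs no inner-product (parallelogram) identity, because the argument uses the limit of the distance to each candidate point separately, which is finer than the usual Hilbert-space Opial proof. The main point of care is therefore only this observation, and it makes the step routine.

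\emph{Conclusion.} A bounded tail with a unique cluster point $u^\infty$ converges to it. Indeed, if $u(t)\not\to u^\infty$, there is a sequence $t_k\to+\infty$ with $\|u(t_k)-u^\infty\|\geq\varepsilon$. By boundedness this sequence has a convergent subsequence, and its limit is a cluster point different from $u^\infty$, a contradiction. Thus $u(t)\to u^\infty\in S$.

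The discrete variant, Lemma~\ref{lem:opial_dis}, follows verbatim with sequences in place of $t$.
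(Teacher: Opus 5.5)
Your proof is correct. The paper gives no argument for this lemma; it only says that the continuous and discrete variants ``follow from'' Bauschke--Combettes. The Opial lemma there is stated for sequences in a Hilbert space, with distances measured in the Hilbert norm and with weak sequential cluster points.

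Your direct argument buys three things.
\begin{itemize}
  \item It is self-contained.
  \item It treats the continuous parameter $t$ directly, without a reduction to sequences.
  \item It covers an arbitrary norm on $\R^d$, as the statement claims. The Hilbert-space version covers only inner-product norms. That suffices for the paper's applications, since $\|\cdot\|_\sigma$ is such a norm, but not for the lemma as literally stated.
\end{itemize}
The cited route, in turn, gives the infinite-dimensional weak-convergence result, which is not needed here.

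Your argument can also be shortened. Once you have a cluster point $u^\infty\in S$ with $u(t_k)\to u^\infty$, hypothesis (i) with $v=u^\infty$ says that $\lim_{t\to+\infty}\|u(t)-u^\infty\|$ exists. That limit must equal $\lim_k\|u(t_k)-u^\infty\|=0$, so $u(t)\to u^\infty$ immediately. Your uniqueness step and the final contradiction step can therefore be dropped.
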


\begin{lemma}\label{lem:opial_dis}
Let $S\subset\R^d$ be nonempty and let $\{u_k\}\subset\R^d$.
Fix any norm on $\R^d$. Suppose that $\lim_{k\to+\infty}\|u_k-v\|$
exists and is finite for every $v\in S$, and that every cluster point of
$\{u_k\}$ belongs to $S$. Then $u_k$ converges to a point of $S$.
\end{lemma}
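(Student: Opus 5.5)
The plan is to reduce Lemma~\ref{lem:opial_dis} to its continuous counterpart in spirit, but to argue directly, since the setting is finite dimensional. First I will show that $\{u_k\}$ is bounded. Fix any $v\in S$; this is possible because $S\neq\varnothing$. By hypothesis $\lim_{k\to+\infty}\|u_k-v\|$ exists and is finite. Hence $\sup_k\|u_k-v\|<+\infty$, and so $\{u_k\}$ is bounded in $\R^d$. All norms on $\R^d$ are equivalent, so Bolzano--Weierstrass applies. It yields at least one cluster point $u^\infty$, and by the second hypothesis $u^\infty\in S$.

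The key step is to use the first hypothesis with the particular choice $v=u^\infty$. Since $u^\infty\in S$, the limit $\ell:=\lim_{k\to+\infty}\|u_k-u^\infty\|$ exists. Along a subsequence $u_{k_j}\to u^\infty$ we have $\|u_{k_j}-u^\infty\|\to0$. Because the full sequence of distances converges, its limit must equal the subsequential limit, so $\ell=0$. Therefore $u_k\to u^\infty\in S$, which is the claim.

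No real obstacle is expected. The only point needing care is that the norm is arbitrary but fixed. This matters in only two places: boundedness in that norm implies boundedness in the Euclidean norm, and convergence in one norm is equivalent to convergence in the other. Finite-dimensional norm equivalence covers both. The argument uses neither uniqueness of cluster points nor the usual two-cluster-point comparison from the Hilbert-space Opial lemma, because we can pick the reference point to be a cluster point itself.
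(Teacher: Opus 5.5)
Your argument is correct. The paper gives no proof of this lemma and only says it follows from \cite{Bauschke16}, i.e.\ from the Hilbert-space Opial lemma. Your route is therefore genuinely different.

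The standard proof of the Hilbert-space Opial lemma shows there is at most one weak cluster point. For two candidates $x,y$ it uses the identity $\|u_k-x\|^2-\|u_k-y\|^2=2\langle u_k,y-x\rangle+\|x\|^2-\|y\|^2$, and so it relies on the norm being induced by an inner product.

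Your argument is more elementary. Finite dimensionality gives a strongly convergent subsequence, so you can take that cluster point $u^\infty\in S$ itself as the reference point $v$. The convergent distance $\|u_k-u^\infty\|$ then has a subsequence tending to $0$, which forces its limit to be $0$.

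This needs no inner-product structure, so it proves the lemma exactly as stated, for an arbitrary fixed norm. The cited Hilbert-space result covers only inner-product norms directly. That is enough for the paper's application, since $\|\cdot\|_\sigma$ comes from a weighted inner product.

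What your approach gives up is infinite dimensions. There a bounded sequence need only have weak cluster points, and weak convergence does not make $\|u_{k_j}-u^\infty\|\to0$, so a cluster point cannot serve as a strong reference point. The two-cluster-point comparison exists to handle exactly that case.
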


The next two lemmas will be used to prove the convergence rates
in the continuous and discrete settings, respectively.

\begin{lemma}
\label{lem:volterra}
Let  $t_0>0$, $a:[t_0,+\infty)\to[0,+\infty)$ be continuous,
$G,B:[t_0,+\infty)\to\R^m$ be continuous, and
$M:[t_0,+\infty)\to\R_+^m$ be componentwise nondecreasing with
$M(t_0)=0$. Suppose that
\begin{equation}\label{eq:vol_rel}
  G(t)
  +
  \int_{t_0}^t a(s)G(s)\,ds
  =
  B(t)-M(t), \qquad \forall t\geq t_0.
\end{equation}
Then the following statements hold.
\begin{itemize}
  \item[(i)] If
  $\sup_{t\geq t_0}\norm{B(t)}\leq C$ for some $C>0$, then
$
    \sup_{t\geq t_0}\norm{\pos{G(t)}}\leq2C.
$
  \item[(ii)] Suppose that $a(t)=c/t$ for some $c>0$ and that $\lim_{t\to +\infty}B(t)=B_{\infty}\in \R^m$ 
  has a finite limit. Then
\[
    \lim_{t\to+\infty}\norm{\pos{G(t)}}=0.
\]   Moreover, if, for some component $i$, there exists
$T_i\geq t_0$ such that
$
  M_i(t)=M_i(T_i)$ for all $t\geq T_i$,
then
\[
  \lim_{t\to+\infty}G_i(t)=0.
\]
\end{itemize}
\end{lemma}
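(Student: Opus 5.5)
The plan is to solve the integral relation \eqref{eq:vol_rel} explicitly, one component at a time, by the variation-of-constants formula. This gives a representation of $G_i$ in which the contribution of $M$ visibly has a favorable sign. Set $A(t):=\int_{t_0}^t a(s)\,ds$ and $H(t):=\int_{t_0}^t a(s)G(s)\,ds$. By continuity $H$ is $C^1$, $H(t_0)=0$, and \eqref{eq:vol_rel} gives, for each component $i$,
\[
  \dot H_i+aH_i=a(B_i-M_i).
\]
Integrating with the factor $e^{A}$ yields
\[
  H_i(t)=\int_{t_0}^t w_t(s)\bigl(B_i(s)-M_i(s)\bigr)\,ds,
  \qquad
  w_t(s):=a(s)e^{-(A(t)-A(s))}\geq0.
\]
The weights satisfy $\int_{t_0}^t w_t(s)\,ds=1-e^{-A(t)}\in[0,1]$. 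Since $G_i=B_i-M_i-H_i$, we get
\[
  G_i(t)=\Bigl[B_i(t)-\int_{t_0}^t w_t B_i\,ds\Bigr]-\Bigl[M_i(t)-\int_{t_0}^t w_t M_i\,ds\Bigr].
\]
Because $M_i$ is nonnegative and nondecreasing, $\int w_tM_i\leq M_i(t)(1-e^{-A(t)})\leq M_i(t)$. So the second bracket is nonnegative, and $G_i(t)\leq B_i(t)-\int w_tB_i\,ds$.

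\emph{Part (i).} From the inequality just obtained, $\pos{G_i(t)}\leq\bigl|B_i(t)-\int w_tB_i\,ds\bigr|$ componentwise. Taking Euclidean norms and applying Minkowski's inequality for the vector-valued integral gives
\[
  \norm{\pos{G(t)}}\leq\norm{B(t)}+\int_{t_0}^t w_t(s)\norm{B(s)}\,ds\leq C+C(1-e^{-A(t)})\leq2C.
\]

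\emph{Part (ii).} Now $a=c/t$, so $w_t(s)=cs^{c-1}/t^c$ and the total mass $1-(t_0/t)^c$ tends to $1$. Moreover, for every fixed $T$ the mass on $[t_0,T]$ is at most $(T/t)^c\to0$. A Toeplitz-type argument then applies: split at a large $T$ beyond which $\norm{B(s)-B_\infty}<\varepsilon$. This shows $\int_{t_0}^t w_tB\,ds\to B_\infty$, hence $B(t)-\int w_tB\,ds\to0$. The upper bound on $G_i$ therefore gives $\limsup G_i(t)\leq0$ for all $i$, and so $\norm{\pos{G(t)}}\to0$. For the refined statement, suppose $M_i\equiv M_i(T_i)$ on $[T_i,\infty)$. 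Then for $t\geq T_i$,
\[
  M_i(t)-\int_{t_0}^t w_tM_i\,ds=M_i(T_i)-\int_{t_0}^{T_i}w_tM_i\,ds-M_i(T_i)\int_{T_i}^t w_t\,ds.
\]
The middle term tends to $0$, since $M_i$ is bounded on $[t_0,T_i]$ and that mass vanishes, and the last integral tends to $1$. So the $M$-bracket tends to $0$, the $B$-bracket tends to $0$ as shown, and therefore $G_i(t)\to0$.

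The main obstacle is conceptual rather than computational. One has to see that the monotone term $M$ enters through $M_i(t)-\int w_tM_i$, which is nonnegative by monotonicity and because the weights have total mass at most one. This is what produces only a one-sided (positive-part) bound, and it explains why two-sided convergence of $G_i$ needs $M_i$ to stabilize eventually. Everything else is the standard weighted-average limit argument.
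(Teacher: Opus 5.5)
Your proposal is correct and follows essentially the same route as the paper. Both use the variation-of-constants representation of $\int_{t_0}^t a(s)G(s)\,ds$ with weights $a(s)e^{-(A(t)-A(s))}$ of total mass $1-e^{-A(t)}$, the monotonicity argument showing that the $M$-contribution is one-signed, and the stabilization argument for the componentwise limit $G_i(t)\to 0$. The only difference is cosmetic: you prove that the weighted average of $B$ converges to $B_\infty$ by a direct Toeplitz splitting, whereas the paper invokes L'H\^opital's rule.
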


\begin{proof}
Set
$
  Z(t):=\int_{t_0}^t a(s)G(s)\,ds.
$
Then \eqref{eq:vol_rel} gives
\[
  \dot Z(t)+a(t)Z(t)
  =
  a(t)\bigl(B(t)-M(t)\bigr),
  \qquad
  Z(t_0)=0.
\]
Multiplying both sides by the integrating factor
$
  e^{\int_{t_0}^t a(\tau)\,d\tau},
$
gives
\[
  \frac{d}{dt}
  \left[
    e^{\int_{t_0}^t a(\tau)\,d\tau}Z(t)
  \right]
  =
  e^{\int_{t_0}^t a(\tau)\,d\tau}
  a(t)\bigl(B(t)-M(t)\bigr).
\]
integrating over $t_0$ to $t$ and using $Z(t_0)=0$, we obtain
\begin{align}\label{eq_ztpara}
  Z(t)
  =
  e^{-\int_{t_0}^t a(\tau)\,d\tau}
  \left(\int_{t_0}^t
    e^{\int_{t_0}^s a(\tau)\,d\tau}
    a(s)B(s)\,ds-\int_{t_0}^t
    e^{\int_{t_0}^s a(\tau)\,d\tau}
    a(s)M(s)\,ds\right).
\end{align}
Since $M$ is componentwise nondecreasing, $0\leq M(s)\leq M(t)$ componentwise for $s\leq t$. Because $a(s)\geq0$,
\begin{align*}
  0
  &\leq
  e^{-\int_{t_0}^t a(\tau)\,d\tau}
  \int_{t_0}^t
    e^{\int_{t_0}^s a(\tau)\,d\tau}
    a(s)M(s)\,ds
  \\
  &\leq
  M(t)
  e^{-\int_{t_0}^t a(\tau)\,d\tau}
  \int_{t_0}^t
    e^{\int_{t_0}^s a(\tau)\,d\tau}
    a(s)\,ds
  \\
  &=
  \left(
    1-
    e^{-\int_{t_0}^t a(\tau)\,d\tau}
  \right)M(t)\\
  &\leq M(t)
\end{align*}
componentwise. Hence, by
$G(t)=B(t)-M(t)-Z(t)$ and \eqref{eq_ztpara},
\begin{align}\label{eq:GBequ}
  G(t)
  \leq
  B(t)
  -
  e^{-\int_{t_0}^t a(\tau)\,d\tau}
  \int_{t_0}^t
    e^{\int_{t_0}^s a(\tau)\,d\tau}
    a(s)B(s)\,ds.
\end{align}
 Therefore, using $\sup_{t\geq t_0}\norm{B(t)}\le C$,
\begin{align*}
  \|\pos{G(t)}\|
  &\leq
  \left\|
  \pos{
    B(t)
    -
    e^{-\int_{t_0}^t a(\tau)\,d\tau}
    \int_{t_0}^t
      e^{\int_{t_0}^s a(\tau)\,d\tau}
      a(s)B(s)\,ds
  }
  \right\|\\
  &\leq
  \left\|
    B(t)
    -
    e^{-\int_{t_0}^t a(\tau)\,d\tau}
    \int_{t_0}^t
      e^{\int_{t_0}^s a(\tau)\,d\tau}
      a(s)B(s)\,ds
  \right\|\\
  &\leq
  \|B(t)\|
  +
  e^{-\int_{t_0}^t a(\tau)\,d\tau}
  \int_{t_0}^t
    e^{\int_{t_0}^s a(\tau)\,d\tau}
    a(s)\|B(s)\|\,ds\\
  &\leq
  C+
  C\left(
    1-e^{-\int_{t_0}^t a(\tau)\,d\tau}
  \right)
  \leq2C.
\end{align*}
Thus
$
  \sup_{t\geq t_0}\|\pos{G(t)}\|\leq2C.
$
This proves~(i).

We now prove~(ii). For $a(t)=c/t$, $e^{\int_{t_0}^t a(\tau)\,d\tau}=\frac{t^c}{t_0^c}$. Then, it follows from \eqref{eq:GBequ} that 
\begin{align}\label{eq_Gteq}
  G(t)
  \le 
  B(t)-ct^{-c}\int_{t_0}^t s^{c-1}B(s)\,ds.
 \end{align}
Since $c>0$ and $B(t)\to B_\infty$, L'Hôpital's rule,
applied componentwise, gives
\[
  \lim_{t\to+\infty}
  \frac{c\int_{t_0}^t s^{c-1}B(s)\,ds}{t^c}
  =
  \lim_{t\to+\infty}
  \frac{ct^{c-1}B(t)}{ct^{c-1}}
  =B_\infty.
\]
Then \eqref{eq_Gteq} implies that
\[
  0\leq\norm{\pos{G(t)}}
  \leq
  \norm{B(t)-ct^{-c}\int_{t_0}^t s^{c-1}B(s)\,ds}
  \to0
  \qquad\text{as }t\to+\infty.
\]
 and consequently
$\lim_{t\to+\infty}\norm{\pos{G(t)}}=0$.

Finally, suppose that, for some component $i$, there exists
$T_i\geq t_0$ such that $M_i(t)=M_i(T_i)$ for all
$t\geq T_i$. By \eqref{eq_ztpara} and
$G(t)=B(t)-M(t)-Z(t)$, we have
\begin{equation}\label{eq_Git}
  G_i(t)
  =
  B_i(t)-ct^{-c}\int_{t_0}^t s^{c-1}B_i(s)\,ds
  -M_i(t)+ct^{-c}\int_{t_0}^t s^{c-1}M_i(s)\,ds.	
\end{equation}
The difference between $B_i(t)$ and its weighted average tends to zero, by the preceding argument.  Since $M_i(t)=M_i(T_i)$ for all $t\geq T_i$,
\[
  ct^{-c}\int_{t_0}^t s^{c-1}M_i(s)\,ds
  =
  ct^{-c}\int_{t_0}^{T_i}s^{c-1}M_i(s)\,ds+
  M_i(T_i)
  \left(
    1-\left(\frac{T_i}{t}\right)^c
  \right).
\]
Therefore,
\[
  \lim_{t\to+\infty}
  ct^{-c}\int_{t_0}^t s^{c-1}M_i(s)\,ds
  =M_i(T_i).
\]
Thus the difference between the weighted average of $M_i$ and $M_i(t)$ tends to zero. Consequently, from \eqref{eq_Git},
\[
  \lim_{t\to+\infty}G_i(t)=0.
\]
This completes the proof of~(ii).
\end{proof}

\begin{lemma}\label{lem:dis_vol}
Let $\{G_k\}_{k\geq1}$, $\{B_k\}_{k\geq1}$, and
$\{M_k\}_{k\geq1}$ be sequences in $\R^m$, and let
$\{a_k\}_{k\geq2}$ be nonnegative.
Suppose that $\{M_k\}$ is componentwise nondecreasing,
$M_1=0$, and
\[
  G_k+\sum_{j=2}^k a_jG_j=B_k-M_k,
  \qquad k\geq1.
\]
Then the following statements hold.
\begin{itemize}
  \item[(i)] If $\sup_{k\geq1}\|B_k\|\leq C$, then
  \[
    \sup_{k\geq1}\|\pos{G_k}\|\leq2C.
  \]

  \item[(ii)] If $\{B_k\}$ has a finite limit and
  $\sum_{k=2}^{+\infty}a_k=+\infty$, then
  \[
    \lim_{k\to+\infty}\|\pos{G_k}\|=0.
  \]
  Moreover, if for some component $i$ there exists an
  integer $K_i\geq1$ such that
  $M_{k,i}=M_{K_i,i}$ for all $k\geq K_i$, then
  \[
    \lim_{k\to+\infty}G_{k,i}=0.
  \]
\end{itemize}
\end{lemma}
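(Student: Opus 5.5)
The plan is to mirror the proof of Lemma~\ref{lem:volterra} with a discrete integrating factor. Set
\[
  Z_k:=\sum_{j=2}^k a_jG_j,\qquad Z_1=0 .
\]
The relation then reads $G_k=B_k-M_k-Z_k$, so for $k\geq2$
\[
  Z_k-Z_{k-1}=a_kG_k=a_k\bigl(B_k-M_k-Z_k\bigr),
\]
which rearranges to
\[
  (1+a_k)Z_k=Z_{k-1}+a_k\bigl(B_k-M_k\bigr).
\]
Define $\pi_1:=1$ and $\pi_k:=\prod_{j=2}^k(1+a_j)$. Multiplying by $\pi_{k-1}$ gives $\pi_kZ_k=\pi_{k-1}Z_{k-1}+\pi_{k-1}a_k(B_k-M_k)$. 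Summing from $2$ to $k$ yields
\[
  Z_k=\frac1{\pi_k}\sum_{j=2}^k w_j\bigl(B_j-M_j\bigr),\qquad w_j:=\pi_{j-1}a_j=\pi_j-\pi_{j-1}\ge0 .
\]
The weights telescope: $\sum_{j=2}^k w_j=\pi_k-1$. This is the discrete analogue of \eqref{eq_ztpara}.

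For the monotone part, $0\le M_j\le M_k$ componentwise for $j\le k$ (using $M_1=0$ and monotonicity), so
\[
  0\le \frac1{\pi_k}\sum_{j=2}^k w_jM_j\le \Bigl(1-\frac1{\pi_k}\Bigr)M_k\le M_k .
\]
Consequently
\[
  G_k=B_k-M_k-Z_k\le B_k-\frac1{\pi_k}\sum_{j=2}^kw_jB_j=:D_k
\]
componentwise. Then $\|\pos{G_k}\|\le\|\pos{D_k}\|\le\|D_k\|\le C+C(1-1/\pi_k)\le 2C$, which is (i). The first inequality here uses monotonicity of the positive part componentwise.

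For (ii), let $B_k\to B_\infty$. Then $D_k=(B_k-B_\infty)-\frac1{\pi_k}\sum_{j=2}^k w_j(B_j-B_\infty)+B_\infty/\pi_k$. Since $\pi_k\ge1+\sum_{j=2}^k a_j\to\infty$ and the weights are nonnegative with total mass $\pi_k-1$, a Cesàro/Stolz argument shows the weighted average $\frac1{\pi_k}\sum w_j(B_j-B_\infty)\to0$: split the sum at a large index $N$, bound the tail by $\sup_{j>N}\|B_j-B_\infty\|$, and let the head vanish because $\pi_k\to\infty$. Hence $D_k\to0$ and $\|\pos{G_k}\|\to0$.

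For the last claim, write componentwise
\[
  G_{k,i}=D_{k,i}-M_{k,i}+\frac1{\pi_k}\sum_{j=2}^kw_jM_{j,i}.
\]
If $M_{j,i}=M_{K_i,i}$ for $j\ge K_i$, the same splitting at $K_i$ gives
\[
  \frac1{\pi_k}\sum_{j=2}^kw_jM_{j,i}=\frac1{\pi_k}\sum_{j=2}^{K_i}w_jM_{j,i}+M_{K_i,i}\frac{\pi_k-\pi_{K_i}}{\pi_k}\to M_{K_i,i}.
\]
Since $M_{k,i}=M_{K_i,i}$ eventually, these two terms cancel in the limit and $G_{k,i}\to0$.

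The main care point is the implicit step: $a_k$ multiplies $G_k$ rather than $G_{k-1}$. That is why the factor $1+a_k$ appears and why the weights $w_j=\pi_{j-1}a_j$ must be checked to telescope exactly, so that the average of $M_j$ is bounded by $M_k$.
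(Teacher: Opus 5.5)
Your proof is correct and is essentially the paper's argument. Your normalized weights $w_j/\pi_k=\pi_{j-1}a_j/\pi_k$ coincide exactly with the paper's $\omega_{j,k}=(1-\theta_j)\prod_{\ell=j+1}^k\theta_\ell$ (where $\theta_\ell=1/(1+a_\ell)$), and the remaining steps are the same: the bound on the averaged $M_j$, the domination $G_k\le D_k$, and the head/tail splitting for (ii). The only difference is cosmetic: you use the single normalizer $\pi_k\to+\infty$, whereas the paper shows $\omega_{j,k}\to0$ separately for each fixed $j$.
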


\begin{proof}
Set
\[
  Z_k:=\sum_{j=2}^k a_jG_j,
  \qquad
  \theta_k:=\frac1{1+a_k}\in(0,1].
\]
Then $Z_1=0$, and the assumed relation gives
\begin{equation}\label{eq_GBZ}
	G_k=B_k-M_k-Z_k.
\end{equation}
For $k\geq2$,
\[
  Z_k
  =Z_{k-1}+a_kG_k
  =Z_{k-1}+a_k(B_k-M_k-Z_k).
\]
Rearranging gives
\[
  Z_k
  =\theta_kZ_{k-1}+(1-\theta_k)(B_k-M_k).
\]
Iterating this identity and using $Z_1=0$, we obtain
\begin{equation}\label{eq_ZKJ}
  Z_k=\sum_{j=2}^k\omega_{j,k}(B_j-M_j),
  \qquad\text{with} \qquad
  \omega_{j,k}:=
  (1-\theta_j)\prod_{\ell=j+1}^k\theta_\ell,
  \quad 2\leq j\leq k.
\end{equation}
The weights are nonnegative and satisfy
\begin{equation}\label{eq_wjk}
  \begin{aligned}
    \sum_{j=2}^k\omega_{j,k}
=
    \sum_{j=2}^k
    \left(
      \prod_{\ell=j+1}^k\theta_\ell
      -\prod_{\ell=j}^k\theta_\ell
    \right)=1-\prod_{\ell=2}^k\theta_\ell\in[0,1].
  \end{aligned}
\end{equation}
Consequently, from \eqref{eq_GBZ} and \eqref{eq_ZKJ} that
\begin{equation}\label{eq:dis_vol_rep}
  G_k
  =
  B_k-\sum_{j=2}^k\omega_{j,k}B_j
  -M_k+\sum_{j=2}^k\omega_{j,k}M_j.
\end{equation}

(i)
Since $M_1=0$ and $\{M_k\}$ is componentwise
nondecreasing, $0\leq M_j\leq M_k$ for $j\leq k$.
Thus \eqref{eq_wjk} gives, componentwise,
\[
  -M_k+\sum_{j=2}^k\omega_{j,k}M_j
  \leq
  -M_k+\left(\sum_{j=2}^k\omega_{j,k}\right)M_k
  =
  -\left(\prod_{\ell=2}^k\theta_\ell\right)M_k
  \leq0.
\]
Combining this inequality with \eqref{eq:dis_vol_rep}
and taking positive parts yields
\begin{equation}\label{eq:dis_vol_bound}
  \|\pos{G_k}\|
  \leq
  \left\|\pos{B_k-\sum_{j=2}^k\omega_{j,k}B_j}\right\|
  \leq
  \left\|B_k-\sum_{j=2}^k\omega_{j,k}B_j\right\|.
\end{equation}
If $\sup_{k\geq1}\|B_k\|\leq C$, then
\[
  \|\pos{G_k}\|
  \leq
  \|B_k\|+\sum_{j=2}^k\omega_{j,k}\|B_j\|
  \leq
  C\left(1+\sum_{j=2}^k\omega_{j,k}\right)
  \leq2C,
\]
where the last inequality follows from \eqref{eq_wjk}. Taking the supremum over $k\geq1$ proves~(i).

(ii) Since $a_\ell\geq0$, induction gives
\[
  \prod_{\ell=2}^k(1+a_\ell)
  \geq1+\sum_{\ell=2}^k a_\ell,
  \qquad k\geq2.
\]
Indeed, equality holds for $k=2$. If the inequality
holds for $k-1$, then
\[
  \prod_{\ell=2}^k(1+a_\ell)
  \geq
  \left(1+\sum_{\ell=2}^{k-1}a_\ell\right)(1+a_k)
  \geq
  1+\sum_{\ell=2}^k a_\ell.
\]
Since $\theta_\ell=1/(1+a_\ell)$ and
$\sum_{\ell=2}^{+\infty}a_\ell=+\infty$, it follows that
\begin{equation}\label{eq_prodTheta}
  0<
  \prod_{\ell=2}^k\theta_\ell
  =
  \frac{1}{\prod_{\ell=2}^k(1+a_\ell)}
  \leq
  \frac{1}{1+\sum_{\ell=2}^k a_\ell}
  \to0, \qquad\text{as }k\to+\infty.
\end{equation}
For each fixed $j\geq2$, we also have
$\sum_{\ell=j+1}^{+\infty}a_\ell=+\infty$.
Applying the same estimate to the product starting
at $j+1$, from \eqref{eq_ZKJ}, we obtain
\begin{equation}\label{eq_wjkk}
  0\leq\omega_{j,k}
  =
  (1-\theta_j)\prod_{\ell=j+1}^k\theta_\ell
  \leq
  \frac{1-\theta_j}
       {1+\sum_{\ell=j+1}^k a_\ell}
  \to0
  \qquad\text{as }k\to+\infty.
\end{equation}

Since $\{B_k\}$ has a finite limit, let $B^\infty=\lim_{k\to+\infty}B_k$. By
\eqref{eq_wjk},
\[
  \sum_{j=2}^k\omega_{j,k}B_j-B^\infty
  =
  \sum_{j=2}^k\omega_{j,k}(B_j-B^\infty)
  -
  \left(\prod_{\ell=2}^k\theta_\ell\right)B^\infty.
\]
For every $\varepsilon>0$, choose $N\geq2$ such that
$\|B_j-B^\infty\|\leq\varepsilon$ for all $j>N$.
It follows from \eqref{eq_wjk} that for any  $k>N$,
\begin{align*}
  \left\|
    \sum_{j=2}^k\omega_{j,k}B_j-B^\infty
  \right\|
  &\leq
  \sum_{j=2}^N\omega_{j,k}\|B_j-B^\infty\|+
  \varepsilon\sum_{j=N+1}^k\omega_{j,k}
  +\left(\prod_{\ell=2}^k\theta_\ell\right)\|B^\infty\|\\
  &\leq
  \sum_{j=2}^N\omega_{j,k}\|B_j-B^\infty\|
  +\varepsilon
  +\left(\prod_{\ell=2}^k\theta_\ell\right)\|B^\infty\|.
\end{align*}
For this fixed $N$, it follows from \eqref{eq_wjkk} that weights
$\lim_{k\to+\infty}\omega_{j,k} = 0$ for any $2\leq j\leq N$. Then from \eqref{eq_prodTheta},
\[
  0\leq
  \limsup_{k\to+\infty}
  \left\|
    \sum_{j=2}^k\omega_{j,k}B_j-B^\infty
  \right\|
  \leq\varepsilon.
\]
As $\varepsilon>0$ is arbitrary, this upper limit is
zero. Therefore,
\begin{equation}\label{eq:dis_weighted_B}
  \lim_{k\to+\infty}
  \sum_{j=2}^k\omega_{j,k}B_j=B^\infty.
\end{equation}
Using \eqref{eq:dis_vol_bound} and $B^\infty=\lim_{k\to+\infty}B_k$, we then obtain
\[
  0\leq\|\pos{G_k}\|
  \leq
  \|B_k-B^\infty\|
  +
  \left\|
    \sum_{j=2}^k\omega_{j,k}B_j-B^\infty
  \right\|
  \to0,\quad \text{as}\quad k\to+\infty.
\]

Finally, suppose that $M_{k,i}=M_{K_i,i}$ for every
$k\geq K_i$. By \eqref{eq_wjk}, for $k\geq K_i$,
\begin{align*}
  \sum_{j=2}^k\omega_{j,k}M_{j,i}-M_{K_i,i}
  &=
  \sum_{j=2}^k\omega_{j,k}
  (M_{j,i}-M_{K_i,i})
  -\left(\prod_{\ell=2}^k\theta_\ell\right)M_{K_i,i}\\
  &=
  \sum_{j=2}^{K_i-1}\omega_{j,k}
  (M_{j,i}-M_{K_i,i})
  -\left(\prod_{\ell=2}^k\theta_\ell\right)M_{K_i,i}.
\end{align*}
The finite sum tends to zero because
$\omega_{j,k}\to0$ for each fixed $j$ (by \eqref{eq_wjkk}), and the last
term tends to zero because \eqref{eq_prodTheta}.
Thus,
\begin{equation}\label{eq:limM}
  \lim_{k\to+\infty}
  \sum_{j=2}^k\omega_{j,k}M_{j,i}
  =M_{K_i,i}.
\end{equation}
For $k\geq K_i$, the $i$th component of
\eqref{eq:dis_vol_rep} can be written as
\[
  G_{k,i}
  =
  \left(
    B_{k,i}-\sum_{j=2}^k\omega_{j,k}B_{j,i}
  \right)
  +
  \left(
    \sum_{j=2}^k\omega_{j,k}M_{j,i}-M_{K_i,i}
  \right).
\]
Both terms on the right tend to zero by
\eqref{eq:dis_weighted_B} and  \eqref{eq:limM}.
Hence $\lim_{k\to+\infty}G_{k,i}=0$.
\end{proof}

\begin{lemma}\cite[Lemma~5.14]{AttouchMathProg}\label{le_disc_per}
Let $\{a_k\}_{k\geq1}$ and $\{b_k\}_{k\geq1}$ be two nonnegative sequences.
Assume that $\sum_{k=1}^{+\infty}b_k<+\infty$ and
$
    a_k^2\leq c^2+\sum_{j=1}^{k}b_ja_j
$ for every $k\geq1$,
where $c\geq0$. Then
$
    \sup_{k\geq1}a_k
    \leq c+\sum_{j=1}^{+\infty}b_j
    <+\infty.
$
\end{lemma}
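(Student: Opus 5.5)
The plan is to run the standard discrete Gronwall--Bihari argument for Lemma~\ref{le_disc_per} on the auxiliary quantity
\[
  A_k := \max_{1\le j\le k} a_j .
\]
Write $S:=\sum_{j=1}^{+\infty} b_j<+\infty$. For every $j\le k$, the hypothesis and nonnegativity of the $b_i$ give
\[
  a_j^2 \le c^2+\sum_{i=1}^{j} b_i a_i \le c^2+\sum_{i=1}^{k} b_i a_i \le c^2 + A_k\sum_{i=1}^k b_i \le c^2 + S A_k .
\]
Taking the maximum over $j\le k$ yields $A_k^2\le c^2+SA_k$.

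Next, solve this quadratic inequality in $A_k\ge0$. It gives
\[
  A_k\le \tfrac12\bigl(S+\sqrt{S^2+4c^2}\bigr).
\]
Since $\sqrt{S^2+4c^2}\le S+2c$, this is at most $S+c$. The bound is uniform in $k$, so $\sup_k a_k=\sup_k A_k\le c+\sum_{j} b_j<+\infty$.

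No step is a real obstacle. The only care needed is the monotonicity in the first paragraph: one must pass from the inequality at index $j$ to a bound involving the partial sum up to $k$. This uses $b_i a_i\ge 0$, together with the observation that the running maximum absorbs every $a_i$ with $i\le k$. The degenerate case $A_k=0$ is trivial.
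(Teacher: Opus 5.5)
Your proof is correct: the running maximum gives $A_k^2\le c^2+SA_k$, and the larger root satisfies $\tfrac12\bigl(S+\sqrt{S^2+4c^2}\bigr)\le S+c$, which yields the uniform bound. The paper gives no proof of its own and simply cites Lemma~5.14 of Attouch et al.; your argument (maximum over the first $k$ terms, then the root bound of the quadratic) is the standard proof of that lemma.
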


\medskip
The next lemma gives continuous and discrete multiplicative-window
estimates under finite weighted kinetic energy.

\begin{lemma}\label{lem:multiplicative_window}
The following continuous and discrete statements hold.
\begin{itemize}
  \item[(i)] Let $u\in C^1([t_0,+\infty);\R^{d_1})$ and let
  $r:[t_0,+\infty)\to\R^{d_2}$ be continuous. Suppose that
  \[
    \int_{t_0}^{+\infty}
    t\bigl(\|\dot u(t)\|^2+\|r(t)\|^2\bigr)\,dt<+\infty.  \]
 If $\{t_k\}\subset[t_0,+\infty)$ satisfies
$t_k\to+\infty$ and $\rho>1$, then
  \[
    \sup_{s\in[t_k,\rho t_k]}
    \|u(s)-u(t_k)\|\to0, \quad \text{as\ } t_k\to +\infty.
  \]
  Moreover, there exists $s_k\in[t_k,\rho t_k]$ such that
  \[
  \lim_{s_k\to+\infty}  s_k\|\dot u(s_k)\|+s_k\|r(s_k)\|=0.
  \]
  \item[(ii)] Let $\{u_k\}_{k\geq0}\subset\R^{d_1}$ and
  $\{r_k\}_{k\geq1}\subset\R^{d_2}$ satisfy
  \[
    \sum_{k=1}^{+\infty}
    k\left(
      \|u_{k+1}-u_k\|^2+\|r_{k+1}\|^2
    \right)<+\infty.
  \]
If $\{n_j\}$ is a sequence of positive integers with
$n_j\to+\infty$ and $\rho>1$, then
  \[
    \sup_{n_j\leq q\leq\lfloor\rho n_j\rfloor}
    \|u_q-u_{n_j}\|\to0, \quad \text{as\ } n_j\to +\infty
  \]
  Moreover, there exist integers
  $q_j\in[n_j,\lfloor\rho n_j\rfloor]$ such that
  \[
    q_j\left(
      \|u_{q_j}-u_{q_j-1}\|
      +\|u_{q_j+1}-u_{q_j}\|
      +\|r_{q_j+1}\|
    \right)\to0, \quad \text{as\ } n_j\to +\infty
  \]
\end{itemize}
\end{lemma}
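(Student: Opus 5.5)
For part (i), I would follow the standard "small energy on a multiplicative window" argument. The first step uses Cauchy--Schwarz. For $s\in[t_k,\rho t_k]$,
\[
  \|u(s)-u(t_k)\|\le\int_{t_k}^{\rho t_k}\|\dot u(\tau)\|\,d\tau
  \le\Bigl(\int_{t_k}^{\rho t_k}\frac{d\tau}{\tau}\Bigr)^{1/2}
  \Bigl(\int_{t_k}^{\rho t_k}\tau\|\dot u(\tau)\|^2\,d\tau\Bigr)^{1/2}
  =\sqrt{\log\rho}\,\Bigl(\int_{t_k}^{\rho t_k}\tau\|\dot u\|^2\Bigr)^{1/2}.
\]
The last factor is a tail of a convergent integral, so it tends to zero uniformly in $s$.

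For the existence of $s_k$, I would argue by contradiction through an averaging argument. Set $\psi(\tau):=\tau^2(\|\dot u(\tau)\|^2+\|r(\tau)\|^2)$, which is continuous. Then
\[
  \min_{[t_k,\rho t_k]}\psi\cdot\log\rho\le\int_{t_k}^{\rho t_k}\frac{\psi(\tau)}{\tau}\,d\tau
  =\int_{t_k}^{\rho t_k}\tau(\|\dot u\|^2+\|r\|^2)\,d\tau\to0.
\]
Choosing $s_k$ as a minimizer of $\psi$ on the compact window gives $\psi(s_k)\to0$. Since $s_k\|\dot u(s_k)\|+s_k\|r(s_k)\|\le\sqrt{2\psi(s_k)}$, this yields the claim, and $s_k\ge t_k\to\infty$.

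Part (ii) mirrors this with sums. For $n_j\le q\le\lfloor\rho n_j\rfloor$,
\[
  \|u_q-u_{n_j}\|\le\sum_{k=n_j}^{q-1}\|u_{k+1}-u_k\|
  \le\Bigl(\sum_{k=n_j}^{\lfloor\rho n_j\rfloor}\frac1k\Bigr)^{1/2}\Bigl(\sum_{k\ge n_j}k\|u_{k+1}-u_k\|^2\Bigr)^{1/2}.
\]
The harmonic window sum is bounded by $\log\rho+1/n_j$, which is bounded, and the second factor is a vanishing tail.

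For $q_j$, define $\psi_k:=k^2(\|u_k-u_{k-1}\|^2+\|u_{k+1}-u_k\|^2+\|r_{k+1}\|^2)$ and pick $q_j$ minimizing $\psi_k$ over the window $[n_j,\lfloor\rho n_j\rfloor]$. Then
\[
  \min\psi\cdot\sum_{k\in\text{window}}\frac1k\le\sum_{k\in\text{window}}k\bigl(\|u_k-u_{k-1}\|^2+\|u_{k+1}-u_k\|^2+\|r_{k+1}\|^2\bigr).
\]
For the term $k\|u_k-u_{k-1}\|^2$, I would reindex, using $k\le 2(k-1)$ for $k\ge2$, so that it is controlled by $2(k-1)\|u_k-u_{k-1}\|^2$, again a tail of the assumed series. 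The whole right-hand side is therefore a tail tending to zero.

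The main obstacle is that the window sum of $1/k$ must stay bounded below by a positive constant. For large $n_j$ it is at least roughly $\log\rho-1/n_j$, which is bounded below as long as the window is nonempty with $\lfloor\rho n_j\rfloor\ge n_j+1$. That holds once $n_j>1/(\rho-1)$, which is true for all large $j$. Hence $\psi_{q_j}\to0$, and taking square roots as in part (i) gives the stated limit.
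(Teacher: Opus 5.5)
Your proposal is correct and follows essentially the same route as the paper. In both, Cauchy--Schwarz against $d\tau/\tau$ gives the $\sqrt{\log\rho}$-times-tail bound, and $s_k$ (resp.\ $q_j$) is chosen as a minimizer over the multiplicative window. The only difference is cosmetic: you minimize $\tau^2(\|\dot u\|^2+\|r\|^2)$ (resp.\ $k^2(\cdots)$) and divide by $\log\rho$ (resp.\ by the harmonic window sum, which in fact always exceeds $\log\rho$ because $\lfloor\rho n_j\rfloor+1>\rho n_j$, so no largeness condition on $n_j$ is needed). The paper instead minimizes a less heavily weighted quantity, divides by the window length $(\rho-1)t_k$ (resp.\ the number of indices), and then multiplies back using $s_k\le\rho t_k$ (resp.\ $q_j\le\rho n_j$). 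Your explicit reindexing of the backward-difference term via $k\le 2(k-1)$ is a detail the paper leaves implicit.
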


\begin{proof}
(i) For every $s\in[t_k,\rho t_k]$, the Cauchy-Schwarz inequality gives
\begin{align*}
  \|u(s)-u(t_k)\|
  &\leq
  \int_{t_k}^{s}\|\dot u(t)\|\,dt\leq
  \left(
    \int_{t_k}^{s}t\|\dot u(t)\|^2\,dt
  \right)^{1/2}
  \left(
    \int_{t_k}^{s}\frac{dt}{t}
  \right)^{1/2}\\
  &\leq
  \sqrt{\log\rho}
  \left(
    \int_{t_k}^{+\infty}t\|\dot u(t)\|^2\,dt
  \right)^{1/2}.
\end{align*}
 Since $
  \int_{t_0}^{+\infty}t\|\dot u(t)\|^2\,dt<+\infty$ and $t_k\to+\infty$,
its tail integral tends to zero. Taking the supremum over
$s\in[t_k,\rho t_k]$ therefore yields
\[
  \sup_{s\in[t_k,\rho t_k]}
  \|u(s)-u(t_k)\|
  \leq
  \sqrt{\log\rho}
  \left(
    \int_{t_k}^{+\infty}t\|\dot u(t)\|^2\,dt
  \right)^{1/2}
  \to0,
\]
as $t_k\to+\infty$.

To prove the second assertion, set
\[
  q(t):=\|\dot u(t)\|^2+\|r(t)\|^2,
  \qquad
  \varepsilon_k:=
  \int_{t_k}^{\rho t_k}tq(t)\,dt.
\]
The function $q(t)$ is continuous and nonnegative. Moreover, by assumption,
\begin{equation}\label{eq_epK}
	  0\leq\varepsilon_k
  \leq
  \int_{t_k}^{+\infty}tq(t)\,dt
  \to0,\quad \text{as } k\to+\infty.
\end{equation}
For each $k$, choose $s_k\in[t_k,\rho t_k]$ at which $q(t)$
attains its minimum on this  interval. Then
\[
  (\rho-1)t_k q(s_k)
  \leq
  \int_{t_k}^{\rho t_k}q(t)\,dt.
\]
Since $t\geq t_k>0$ on the interval,
\[
  \int_{t_k}^{\rho t_k}q(t)\,dt
  \leq
  \frac1{t_k}
  \int_{t_k}^{\rho t_k}tq(t)\,dt
  =
  \frac{\varepsilon_k}{t_k}.
\]
Consequently,
\[
  q(s_k)
  \leq
  \frac{\varepsilon_k}{(\rho-1)t_k^2}.
\]
Using $s_k\leq\rho t_k$ and
$(a+b)^2\leq2(a^2+b^2)$, from \eqref{eq_epK}, we obtain
\[
  0\leq
  \left(
    s_k\|\dot u(s_k)\|+s_k\|r(s_k)\|
  \right)^2
  \leq
  2s_k^2q(s_k)
  \leq
  \frac{2\rho^2}{\rho-1}\varepsilon_k
  \to 0, \quad\text{as } s_k\to+\infty
\]
This proves~(i).

(ii) Set $N_j=\lfloor\rho n_j\rfloor$. For every integer
$q\in[n_j,N_j]$,   the Cauchy-Schwarz
inequality yields
\begin{align*}
  \|u_q-u_{n_j}\|
  &\leq
  \sum_{i=n_j+1}^{q}\|u_i-u_{i-1}\|\leq
  \left(
    \sum_{i=n_j+1}^{q}
    i\|u_i-u_{i-1}\|^2
  \right)^{1/2}
  \left(
    \sum_{i=n_j+1}^{q}\frac1i
  \right)^{1/2}.
\end{align*}
Moreover,
 \[
  \sum_{i=n_j+1}^{N_j}\frac1i
  \leq
  \int_{n_j}^{N_j}\frac{dt}{t}
  =
  \log\frac{N_j}{n_j}
  \leq\log\rho.
\]
Therefore,
\[
  \sup_{\substack{n_j\leq q\leq N_j}}
  \|u_q-u_{n_j}\|
  \leq
  \sqrt{\log\rho}
  \left(
    \sum_{i=n_j+1}^{+\infty}
    i\|u_i-u_{i-1}\|^2
  \right)^{1/2}.
\]
Since $
    \sum_{k=1}^{+\infty}
    k 
      \|u_{k+1}-u_k\|^2 <+\infty$, as $n_j\to+\infty$, 
its tail $\sum_{i=n_j+1}^{+\infty}
    i\|u_i-u_{i-1}\|^2$ tends to zero, proving the first assertion
of~(ii).

For the second assertion, define
\begin{equation}\label{eq_eta}
  \eta_j:=
  \sum_{q=n_j}^{N_j}q
  \left(
    \|u_q-u_{q-1}\|^2
    +\|u_{q+1}-u_q\|^2
    +\|r_{q+1}\|^2
  \right).
\end{equation}
By assumption, $\eta_j\geq0$, and all three weighted series appearing in the definition
of $\eta_j$ converge. Since $n_j\to+\infty$, their
sums over $q=n_j,\ldots,N_j$ tend to zero. Hence
\begin{equation}\label{eq_etaj}
	\lim_{j\to+\infty}\eta_j=0.
\end{equation}

Since $\{n_j,n_j+1,\ldots,N_j\}$ is finite and nonempty,
there exists an index $q_j$ in this set such that
\begin{align*}
  &q_j\left(
    \|u_{q_j}-u_{q_j-1}\|^2
    +\|u_{q_j+1}-u_{q_j}\|^2
    +\|r_{q_j+1}\|^2
  \right)\\
  &\qquad\leq
  q\left(
    \|u_q-u_{q-1}\|^2
    +\|u_{q+1}-u_q\|^2
    +\|r_{q+1}\|^2
  \right)
\end{align*}
for every $q\in\{n_j,n_j+1,\ldots,N_j\}$.
Summing this inequality over $q=n_j,\ldots,N_j$ and using
\eqref{eq_eta}, we obtain
\begin{equation}\label{eq_qjeq}
  q_j\left(
    \|u_{q_j}-u_{q_j-1}\|^2
    +\|u_{q_j+1}-u_{q_j}\|^2
    +\|r_{q_j+1}\|^2
  \right)\leq
  \frac{\eta_j}{N_j-n_j+1}
  \leq
  \frac{\eta_j}{(\rho-1)n_j},
\end{equation}
where the last inequality follows from
$N_j-n_j+1=\lfloor\rho n_j\rfloor-n_j+1
\geq(\rho-1)n_j$.

Multiplying \eqref{eq_qjeq} by $q_j$ and using $q_j\leq N_j\leq \rho n_j$ gives
\[q_j^2\left(
    \|u_{q_j}-u_{q_j-1}\|^2
    +\|u_{q_j+1}-u_{q_j}\|^2
    +\|r_{q_j+1}\|^2
  \right)\leq
  \frac{\rho \eta_j}{\rho-1}.
\]
Finally, applying
$(a+b+c)^2\leq3(a^2+b^2+c^2)$ and using \eqref{eq_etaj} yields
\begin{align*}
  0
 \leq
  \left[
    q_j\left(
      \|u_{q_j}-u_{q_j-1}\|
      +\|u_{q_j+1}-u_{q_j}\|
      +\|r_{q_j+1}\|
    \right)
  \right]^2\leq
  \frac{3\rho \eta_j}{\rho-1}
  \to0,
\end{align*}
as $j \to+\infty$, the required conclusion
follows. This completes the proof.
\end{proof}

The next lemma identifies the limit of a discrete distance quantity
from a first-order recurrence.

\begin{lemma}\label{lem:weighted_distance_limit}
Let $\{h_k\}_{k\geq1}$ and $\{E_k\}_{k\geq1}$ be real
sequences, and let $\{R_k\}_{k\geq1}$ be nonnegative.
Suppose that, for some $a\ge 1$,
\[
  (k-1)(h_k-h_{k-1})+ah_k=E_k-R_k,
  \qquad k\geq2.
\]
If $\lim_{k\to+\infty}E_k$ exists and is finite, and
$\sum_{k=2}^{+\infty}R_k/k<+\infty$, then
\[
  \lim_{k\to+\infty}h_k
  =\frac1a\lim_{k\to+\infty}E_k.
\]
\end{lemma}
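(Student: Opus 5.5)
The plan is to treat the recurrence as a discrete analogue of $t\dot h+ah=E-R$ and to solve it with a summation factor. Multiplying by a suitable weight should turn the left-hand side into a telescoping difference.

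Rewrite the relation as $(k-1+a)h_k=(k-1)h_{k-1}+E_k-R_k$. Set
\[
  \pi_k:=\prod_{j=2}^{k}\frac{j-1+a}{j-1},
\]
so that $\pi_k\sim c\,k^{a}$ for some constant $c>0$; this follows from Gamma-function asymptotics, or from the logarithm comparison $\log\pi_k=a\log k+O(1)$. Multiplying the rewritten relation by $\pi_{k-1}/(k-1)$ gives
\[
  \pi_k h_k=\pi_{k-1}h_{k-1}+\frac{\pi_{k-1}}{k-1}(E_k-R_k).
\]
Summing from $2$ to $k$ then yields
\[
  h_k=\frac{\pi_1h_1}{\pi_k}+\frac1{\pi_k}\sum_{j=2}^{k}w_jE_j-\frac1{\pi_k}\sum_{j=2}^k w_jR_j,
  \qquad w_j:=\frac{\pi_{j-1}}{j-1}.
\]
The weights satisfy $w_j=(\pi_j-\pi_{j-1})/a$, hence $\sum_{j=2}^k w_j=(\pi_k-\pi_1)/a$.

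Next I handle the three terms in turn. The first term tends to $0$ because $\pi_k\to\infty$, which holds since $a\ge1>0$. The second term is a weighted average with weights summing to $(1-\pi_1/\pi_k)/a$. Stolz--Cesàro, or the same $\varepsilon$--$N$ splitting used in the proof of Lemma~\ref{lem:dis_vol}(ii), then gives that it converges to $\frac1a\lim E_k$.

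For the third term I mimic the continuous argument leading to \eqref{eq:Rlim}. Since $R_j\ge0$ and $w_j=\pi_{j-1}/(j-1)\le C\pi_j/j$ (using $\pi_{j-1}\le\pi_j$ and $j/(j-1)\le2$), we have
\[
  \frac1{\pi_k}\sum_{j=2}^kw_jR_j\le \frac{C}{\pi_k}\sum_{j=2}^{N}\pi_j\frac{R_j}{j}+C\sum_{j=N+1}^{k}\frac{\pi_j}{\pi_k}\frac{R_j}{j}.
\]
In the tail sum, $\pi_j/\pi_k\le1$ for $j\le k$, since $\pi$ is nondecreasing. So the tail is at most $C\sum_{j>N}R_j/j<\varepsilon$ for $N$ large, and the head vanishes as $k\to\infty$ with $N$ fixed. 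Thus the third term tends to $0$, and $h_k\to\frac1a\lim E_k$.

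The main obstacle is bookkeeping rather than ideas. One must pick the correct summation factor, verify $\pi_k\to\infty$ and the monotonicity $\pi_{j}\le\pi_k$, and check that the weight identity $\sum w_j=(\pi_k-\pi_1)/a$ makes the averaging constant exactly $1/a$. Only $\pi_k\to\infty$ and monotonicity are actually needed, not the precise asymptotic $\pi_k\sim ck^a$, so I would avoid Gamma-function details.
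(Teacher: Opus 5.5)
Your proposal is correct and follows essentially the same argument as the paper: your $\pi_k$ is exactly the paper's $P_k=\prod_{j=2}^k(1+a/(j-1))$, and your weights $w_j=(\pi_j-\pi_{j-1})/a$ give the same telescoped representation of $h_k$. The $E$-term is handled by Stolz--Ces\`aro and the $R$-term by the same head/tail splitting in both; the only difference is cosmetic, since your bound $w_j\le 2\pi_j/j$ avoids the paper's use of $a\ge1$ (which it needs for $P_j/(j+a-1)\le P_j/j$), so your version holds for every $a>0$.
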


\begin{proof}
Set
\[
  P_1=1,\qquad
  P_k=\prod_{j=2}^k\left(1+\frac{a}{j-1}\right),
  \qquad k\geq2.
\]
By definition,
\begin{equation}\label{eq:weighted_product_identity}
  P_k=\frac{k+a-1}{k-1}P_{k-1},
  \qquad
  P_k-P_{k-1}
  =\frac{aP_{k-1}}{k-1}
  =\frac{aP_k}{k+a-1}>0.
\end{equation}
Thus $\{P_k\}$ is strictly increasing and $P_k\geq P_1=1$.
In particular,
\[
  P_k-P_{k-1}\geq\frac{a}{k-1}.
\]
Summing this inequality gives
\[
  P_k
  =1+\sum_{j=2}^k(P_j-P_{j-1})
  \geq1+a\sum_{j=2}^k\frac1{j-1}.
\]
This implies $P_k\to+\infty$.

We next derive a representation of $h_k$.
For every $j\geq2$, the assumed recurrence can be written as
\[
  (j+a-1)h_j=(j-1)h_{j-1}+E_j-R_j.
\]
Multiplying by $P_j/(j+a-1)$ and using \eqref{eq:weighted_product_identity}
we obtain
\[
  P_jh_j-P_{j-1}h_{j-1}
  =
  \frac{P_j-P_{j-1}}a E_j
  -\frac{P_jR_j}{j+a-1}.
\]
Summing from $j=2$ to $k$, using $P_1=1$, and dividing
by $P_k$, we obtain
\begin{equation}\label{eq:weighted_distance_rep}
  h_k
  =
  \frac{h_1}{P_k}
  +\frac1{aP_k}\sum_{j=2}^k(P_j-P_{j-1})E_j
  -\frac1{P_k}\sum_{j=2}^k\frac{P_jR_j}{j+a-1}.
\end{equation}

Since $a\ge 1$, $P_k$ is strictly increasing with
$P_k\to+\infty$, and $\{E_k\}$ has a finite limit,
we may apply the Stolz-Ces\`aro theorem
\cite[Theorem~2.7.2]{Choudary} to
\[
  a_k=\sum_{j=2}^k(P_j-P_{j-1})E_j,
  \qquad
  b_k=aP_k,
\]
to obtain
\begin{equation}\label{eq_term1}
\lim_{k\to+\infty} \frac{\sum_{j=2}^k(P_j-P_{j-1})E_j}{aP_k}
  =
  \lim_{k\to+\infty}
  \frac{a_k-a_{k-1}}{b_k-b_{k-1}}
  =
  \lim_{k\to+\infty}
  \frac{(P_k-P_{k-1})E_k}{a(P_k-P_{k-1})}
  =
  \frac1a\lim_{k\to+\infty}E_k.
\end{equation}

For the term containing $R_j$, since $a\geq1$ and
$P_j\leq P_k$ for $j\leq k$, we have
\[
  0\leq
  \frac{P_jR_j}{P_k(j+a-1)}
  \leq\frac{R_j}{j},
  \qquad 2\leq j\leq k.
\]
Thus, for every fixed $N\geq2$ and $k>N$,
\begin{align*}
  0
  &\leq
  \frac1{P_k}\sum_{j=2}^k\frac{P_jR_j}{j+a-1}\leq
  \frac1{P_k}\sum_{j=2}^N\frac{P_jR_j}{j+a-1}
  +\sum_{j=N+1}^{+\infty}\frac{R_j}{j}.
\end{align*}
For fixed $N$, the first term on the right tends to zero
as $P_k\to+\infty$. Hence
\[
  0\leq
  \limsup_{k\to+\infty}
  \frac1{P_k}\sum_{j=2}^k\frac{P_jR_j}{j+a-1}
  \leq
  \sum_{j=N+1}^{+\infty}\frac{R_j}{j}.
\]
Letting $N\to+\infty$ and using assumption
$\sum_{k=2}^{+\infty}R_k/k<+\infty$, we obtain
\[
  \lim_{k\to+\infty}
  \frac1{P_k}\sum_{j=2}^k\frac{P_jR_j}{j+a-1}=0.
\]
Together with $h_1/P_k\to0$,
\eqref{eq:weighted_distance_rep} and \eqref{eq_term1} yield
\[
  \lim_{k\to+\infty}h_k
  =\frac1a\lim_{k\to+\infty}E_k.
\]

\end{proof}

\end{document}